\newtheorem{thm}{Theorem} %only for theorems in introduction
\newtheorem{theorem}{Theorem}[section]
\newtheorem{lemma}[theorem]{Lemma}
\newtheorem{cor}{Corollary}
\newtheorem{assumpt}[theorem]{Assumption}
\newtheorem*{lemma*}{Lemma}
\theoremstyle{definition}
\newtheorem{hypo}[theorem]{Assumption}%\renewcommand{\thehypo}{\Alph{hypo}}
\newtheorem*{hypo*}{Assumption}
\newtheorem{remark}[theorem]{Remark}
\newtheorem*{example*}{Example}
\newtheorem*{notation*}{Notation}
\newtheorem*{remark*}{Remark}
\numberwithin{equation}{section}
\newcommand{\comment}[1]{}
\newcommand{\la}{\,\langle\,}
\newcommand{\ra}{\,\rangle\,}
\newcommand{\ov}[1]{\overline{#1}}
\newcommand{\sslash}{\mathbin{\backslash\mkern-6mu\backslash}}
  \def\tr{\operatorname{Tr}}
 \def\stab{\operatorname{Stab }}
\def\sgn{\operatorname{sgn}}  
\def\PP{\mathbb{P}} \def\Q{\mathbb{Q}}\def\R{\mathbb{R}}\def\Z{\mathbb{Z}}\def\C{\mathbb{C}}
\def\le{\leqslant} \def\ge{\geqslant}
\def\SL{\mathrm{SL}} \def\GL{\mathrm{GL}}
 \def\H{\mathcal{H}}
\def\Sc{\mathcal{S}}  \def\M{\mathcal{M}}
\def\X{X}
  \def\cc{\mathcal{C}}
\def\e{\varepsilon} \def\DD{\Delta} \def\G{\Gamma}\def\om{\omega}\def\SS{\Sigma}
\def\dd{\delta} \def\ss{\sigma}
\def\wc{\widetilde{\chi}}
\def\a{\alpha}\def\g{\gamma}\def\aa{\mathfrak{a}}\def\bb{\mathfrak{b}}
\def\cps{\operatorname{Cusps}}
\def\+{\,+\,}   %\def\={\;=\;}
\def\sm#1#2#3#4{\left(\begin{smallmatrix}#1&#2 \\ #3 & #4 \end{smallmatrix}\right)}
\def\pmat#1#2#3#4{\begin{pmatrix}#1&#2 \\ #3 & #4 \end{pmatrix}}
\def\be{\begin{equation}}  \def\ee{\end{equation}}
\def\vp{\varphi}\def\wg{\widetilde{\Gamma}}
\def\rar{\rightarrow}
\title[Trace formula for Hecke operators]{On the trace formula for 
Hecke operators\\ on congruence subgroups, II}
\author{Alexandru A. Popa}
\address{Institute of Mathematics ``Simion Stoilow" of the Romanian Academy,
P.O. Box 1-764, RO-014700 Bucharest, Romania}
\address{E-mail: aapopa@gmail.com}
\keywords{Trace formula; Hecke operators; holomorphic modular forms}
\subjclass[2010]{11F11, 11F25}
\begin{document}
\maketitle

\begin{abstract}In a previous paper, we obtained a general trace formula for
double coset operators acting on modular forms for congruence subgroups, 
expressed as a sum over conjugacy classes. Here we specialize it to 
the congruence subgroups 
$\Gamma_0(N)$ and $\Gamma_1(N)$, obtaining explicit formulas in terms of class 
numbers for 
the trace of a composition of Hecke and Atkin-Lehner operators. The formulas
are among the simplest in the literature, and  hold without any 
restriction on the index of the operators. We give two applications of the
trace formula for $\Gamma_1(N)$: we determine explicit trace forms for 
$\Gamma_0(4)$ with Nebentypus, and we compute the limit of the trace of a 
fixed Hecke operator as the level $N$ tends to infinity.
\end{abstract}

%\tableofcontents

\section{Introduction}\label{sec1}

In~\cite{P}, we gave a short proof of a trace formula for Hecke operators 
on modular forms of weight $k\ge 2$ for finite index subgroups of $\SL_2(\Z)$. 
The formula expresses the following combination of traces 
 \be\label{0} \tr([\SS], M_k(\G,\chi)+S_k(\G,\chi))\, := \, 
  \tr([\SS], M_k(\G,\chi))\+\tr([\SS], S_k(\G,\chi))\;, \ee
as a sum over conjugacy classes of simple factors (the notation 
and the precise formula is reviewed in Section~\ref{sec2}). In this paper
we specialize $\G$ to be one of the congruence subgroups~$\G_0(N)$ or~$\G_1(N)$,
and we obtain explicit trace formulas in terms of class numbers. These formulas 
are among the simplest 
in the extensive literature, requiring one extra ingredient compared to
the $\SL_2(\Z)$ case: an arithmetic function which is multiplicative in 
the level $N$. For example, for $\G=\G_0(N)$ and for~$\chi$ any character modulo $N$,
we obtain for all $k\ge 2$, $N\ge 1$ and $n\ge 1$:
\[\begin{split}
\tr(T_n,M_k(\G,\chi)+S_k(\G,\chi))\,=\,
-\sum_{t\in \Z}p_{k-2}(t,n)\cdot
\sum_{\substack{u|N\\u^2|t^2-4n}} H\Big(\frac{4n-t^2}{u^2}\Big) C_{N,\chi}(u,t,n) +\\
+\delta_{k,2}\delta_{\chi,{\bf 1}}\sum_{d|n,(d,N)=1} n/d
\end{split}
\]
where $p_{k-2}(t,n)$ is the Gegenbauer polynomial, $H(D)$ is the Kronecker-Hurwitz 
class number, extended to negative squares by $H(-u^2)=u/2$ for $u>0$ and to 
negative non-squares by 0, and $C_{N,\chi}(u,t,n)$ is a multiplicative function 
in the level $N$, defined in~\eqref{16}.

It was Zagier's insight that the trace formula for $\SL_2(\Z)$ 
takes the simplest form when written for the linear combination~\eqref{0}, 
and he expressed it in terms of the above extension of the Kronecker-Hurwitz class 
numbers to all integers~\cite{Z1,Z}. Here we show, based on the formula proved
in~\cite{P}, that the same phenomenon happens
for any congruence subgroup satisfying a mild assumption (Assumption~\ref{h}), 
which is verified by the usual congruence subgroups. 

While the combination of traces~\eqref{0} yields the simplest formula, 
for applications, and to compare it with the existing literature, it is useful 
to have a formula on the cuspidal subspace alone. The second goal of 
this paper is to extract the cuspidal trace from~\eqref{0}, by computing the 
trace on the Eisenstein subspace. We perform this computation for an arbitrary 
Fuchsian group of the first kind with cups, since the formula may be of independent 
interest, and since we expect the resulting formula for the cuspidal 
trace to hold for such a Fuchsian group as well.

The trace formula for Hecke operators on spaces of modular forms for 
Fuchsian groups has a long history, starting with the celebrated papers 
of Eichler~\cite{E56} and Selberg~\cite{Se}. The existing approaches 
lead to trace formulas on the cuspidal subspace alone, and the 
different types of conjugacy classes need separate 
treatments~\cite{E56,H74,Sh1,Sa,O77,Z1}, whereas in our approach all conjugacy classes
are treated uniformly through a simple invariant. 
Existing formulas are quite complicated, and the index of the Hecke operators 
is usually assumed coprime with the level. Among the simplest and most general
is the trace formula for $\G_0(N)$ with Nebentypus proved by Oesterl\'e by 
analytic means~\cite{C77, O77}, an equivalent form of which we recover in 
Theorem~\ref{T3}. 

We also obtain new formulas for the trace of a composition of 
Hecke and Atkin-Lehner operators for~$\G_0(N)$, both in terms of the regular class numbers, and in terms 
of  the Kronecker-Hurwitz class numbers, and we impose no restrictions on 
the index of the operators involved. A previous such formula was given by Skoruppa
and Zagier~\cite{SZ}, assuming the index of the Hecke operator coprime with
the level. We also include a trace formula for $\G_1(N)$, 
which turns out to be simpler than for $\G_0(N)$, and which is the first such 
formula available in the literature. 

We include two applications that illustrate the versatility of our  
trace formulas. Specializing the trace formula for~$\G_1(N)$ to $N=4$ 
we obtain that 
\[\sum_{n\ \mathrm{even}}\left( \sum_{\substack{t^2\le 4n\\ 4|t-n-1}}
p_{k-2}(t,n) H(4n-t^2)+\sum_{\substack{n=ad\\ a\ \mathrm{odd}}}
\min(a,d)^{k-1}
-\delta_{k,2}\sum_{\substack{n=ad\\ a\ \mathrm{odd}}} d\right) q^n
\]
is a cusp form in $S_k(\G_0(4))$ if $k\ge 2$ is even. The coefficients 
of this modular form are the traces $-\tr(T_n, S_k(\G_0(4))$ for $n$ even, 
and we obtain also an explicit generating series for the traces of $T_n$ 
with $n$ odd (see Corollary~\ref{C1} in Section~\ref{sec1.4}). While the 
explicit cusp form above seems not to have appeared before, the explicit generating series we get for odd coefficients 
was conjectured to be a cusp
form by H.~Cohen more than 30 years ago~\cite{Co}. Although Cohen suggested
that this modular form was related to the ``trace form'', whose 
coefficients are the traces of Hecke operators, the conjecture was only recently 
proved by different methods by M. H. Mertens~\cite{Me1}. In Corollary~\ref{C1} 
we also compute 
explicitly the trace form for $S_k(\G_0(4), \chi)$ if $k$ is odd and 
$\chi$ is the nontrivial character modulo 4. For $k=2$ or $k=3$, the 
corresponding spaces of cusp forms are trivial, and these formulas reduce 
to class number relations similar to the Kronecker-Hurwitz formula. 

As another application, we take the limit in the trace
formula for $\G_1(N)$ as $N\rar\infty$. We find 
that the trace of a fixed Hecke operator $T_n$ stabilizes, and the result is 
suprisingly independent of $n>1$ and $k\ge 2$:
\[
\lim_{\substack{N\rar \infty\\ (N,n-1)=1}} 
\frac{\tr(T_n, S_k(\G_1(N)))}{\varphi(N)}=-\frac{1}{2}
\]
(in fact for $N>2n+2$ coprime to $n-1$ and $k>2$ we have equality).
Such limit forms of the trace formula were used by Serre to prove equidistribution 
results for Hecke eigenvalues of a fixed Hecke operator on~$\G_0(N)$, as the level and 
weight tend to infinity~\cite{S}. Such limits also appear naturally in work of F. 
R\u{a}dulescu~\cite{R}, in the context of certain distributions on $\SL_2(\Z)$. 

The paper is organized as follows. In Section~\ref{sec2} we state the main results:
after recalling the main result of~\cite{P} in Theorem~\ref{T2}, the general
trace formula on the cuspidal space is stated in Theorem~\ref{TSS}. In \S \ref{sec1.3}
we show how  to derive from it explicit versions in terms of class numbers, 
under an additional assumption. The group $\G_0(N)$ satisfies
this assumption, yielding the explicit formulas in Theorems~\ref{T3} and~\ref{T4}. 
The trace formula for $\G_1(N)$ is given in Theorem~\ref{T5}, 
and its applications in Section~\ref{sec1.4}. 

In Section~\ref{secEis} we prove a trace formula on the Eisenstein subspace for 
a general Fuchsian group of the first kind (Theorem~\ref{T5.1}), and use it 
to prove Theorem~\ref{TSS}. Finally, in Section~\ref{sec5} we compute the 
arithmetic functions entering Theorem~\ref{TSS} in the case of 
$\G_0(N)$, thus proving Theorems~\ref{T3} and~\ref{T4}. 

\noindent {\bf Acknowledgements.}
This work was partly supported by the the European Community grant 
PIRG05-GA-2009-248569 and by the CNCS grant TE-2014-4-2077. Part of this work 
was completed during several visits at MPIM in Bonn, whose support 
I gratefully acknowledge. 

\section{Statement of results}\label{sec2}
Let $\G$ be a finite index subgroup of~$\G_1=\SL_2(\Z)$ and let $\SS$ be a double coset contained in the commensurator 
$\wg\subset\GL_2^+(\R)$. 
If $\chi$ is a character of $\G$ with kernel of 
finite index, the action of the double coset operator $[\SS]$ on $M_k(\G,\chi)$ 
is defined using a multiplicative function $\wc$ on the semigroup generated 
by~$\G$ and~$\SS$ inside~$\wg$, such that $\wc|_{\G}=\chi^{-1}$, namely
 \be\label{chi} \wc(\g\ss\g')=\chi^{-1}(\g\g')\wc(\ss)\;, \quad \text{ for all } \g\in\G, \ss\in\SS \;. \ee  
A modular form $f\in M_k(\G,\chi)$ satisfies $f|_k \g=\chi(\g) f$, and the double 
coset operator $[\SS]$ acts by
  \be \label{hecke}
  f|[\SS]=\sum_{\ss\in\G\backslash\SS} \det\ss^{k-1} \cdot \wc(\ss)\cdot f|_k \ss \;, \ee
where $f|_k\g (z)=f(\g z) (c_\g z+d_\g)^{-k}$, and we write $\g=\sm {a_\g}{b_\g}{c_\g}{d_\g}$ throughout the paper. 
\begin{example*} Let $\G$ be the congruence subgroup $\G_0(N):=\{\g\in\G_1 : N|c_\g\}$, and
let $\chi$ be a character modulo $N$ viewed as a character of 
$\G_0(N)$ by $\chi(\g)=\chi(d_\g)$. The usual Hecke operators $T_n$ on $M_k(\G,\chi)$ 
are associated to the double coset
  \be \label{delta}  \DD_n:=\{\ss\in M_2(\Z) \;:\; \det \ss=n,\ N|c_\ss,\ (a_\ss,N)=1\}\;, \ee
and $\wc(\ss)=\chi(a_\ss)$ for $\ss\in\DD_n$.   
\end{example*}
\subsection{A general trace formula on the cuspidal subspace}
For any subset $\Sc$ of $\GL_2^+(\R)$, we denote by
$\ov{\Sc}=(\Sc\cup -\Sc) /\{\pm 1\}\subset \GL_2^+(\R)/\{\pm 1\}$.
By scaling $\SS$ we may assume that it is contained in the set $\M$ of 
integral matrices of positive determinant. For a $\ov{\G}_1$ conjugacy class
$X\subset \ov{\M}$, we choose any representative $M_X\in \M$ and 
we denote by $\DD(\X)=\tr(M_\X)^2-4\det(M_\X)$ the 
discriminant of the quadratic form associated to $M_\X$, and by 
$|\stab_{\ov{\G}_1} M_\X|$ the (possibly infinite) cardinality of the stabilizer 
of~$M_\X$ under conjugation by~$\ov{\G}_1$. We define the conjugacy class 
invariant 
 \be\label{1.70} \e(\X)=\begin{cases} \phantom{xx}
\dfrac 16 &  \text{if $M_X$ scalar,}\vspace{2mm} \\
\dfrac{\sgn \DD(\X)}{|\stab_{\ov{\G}_1} M_\X|} &  \text{ otherwise.}           
            \end{cases} 
 \ee
Explicitly $\e(X)$ is equal to: 
1/6 if $M_X$ is scalar;
$-1/|\stab_{\ov{\G}_1} M_X|$ if $M_X$ is elliptic; 1 if $M_X$ is hyperbolic 
fixing two cusps of~$\G_1$; and 0 otherwise. 

To state the main result of~\cite{P}, let $p_w(t,n)$ be the Gegenbauer 
polynomial, defined by the power series expansion 
  $\ (1-tx+nx^2)^{-1}= \sum_{w\ge 0} p_w(t,n)x^w\;. $
\setcounter{thm}{0}
\begin{thm}[\cite{P}]\label{T2}Let $\G$ be a finite index subgroup of $\G_1$, $k\ge 2$ an integer, 
$\chi$ a character of~$\G$ with kernel of finite index in~$\G$, 
and~$\SS$ a double coset of $\G$ such that $|\G\backslash\SS|=|\G_1\backslash\G_1\SS|$. 
Assuming $\chi(-1)=(-1)^k$ if $-1\in\G$, we have 
 \be \label{TF2} 
 \begin{split}
 \tr([\SS], M_k(\G,\chi)+ S_k^c(\G,\chi)) \,=\,
 \sum_{\X}p_{k-2}(\tr M_\X, \det M_\X)\;\cc_{\G,\SS}^\chi(M_\X)\; \e(\X) \\
 \+\delta_{k,2}\delta_{\chi,{\bf 1}}\; \sum_{\ss\in\G\backslash\SS} \wc(\ss)\;,
 \end{split}
 \ee
where the sum is over $\ov{\G}_1$-conjugacy classes $\X\subset \ov{\G_1\SS\G_1}$ with representative $M_\X\in \G_1\SS\G_1$, 
and\footnote{The same sign is chosen in all three places in~\eqref{1.10}. 
If $-1\notin\G$ at most one choice of signs is possible for each $A$, while 
if $-1\in \G$ both choices yield the same value for the summand.}
 \be\label{1.10} \cc_{\G,\SS}^\chi(M)\; :=\; \sum_{\substack{A\in \ov{\G}\backslash\ov{\G}_1\\ 
 \pm AMA^{-1}\in \SS}}(\pm 1)^k\wc(\pm AMA^{-1}) \;.\ee
The symbol $\dd_{a,b}$ is 1 if $a=b$ and 0 otherwise. 
 \end{thm}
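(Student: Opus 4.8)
The plan is to realize the combined space $M_k(\G,\chi)\oplus S_k^c(\G,\chi)$ cohomologically and to compute the trace of $[\SS]$ by a fixed-point count. The starting observation is that the Gegenbauer polynomial is a character value: if $M$ has eigenvalues $\lambda,\mu$, then expanding $(1-tx+nx^2)^{-1}=\bigl((1-\lambda x)(1-\mu x)\bigr)^{-1}$ in partial fractions gives $p_{k-2}(\tr M,\det M)=(\lambda^{k-1}-\mu^{k-1})/(\lambda-\mu)=\tr\bigl(M\mid \Sym^{k-2}\C^2\bigr)$. This tells me the right module to use is $V_{k-2}:=\Sym^{k-2}\C^2$, the homogeneous polynomials of degree $k-2$. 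By the Eichler--Shimura isomorphism, $M_k(\G,\chi)\oplus S_k^c(\G,\chi)$ is identified, as a module over the Hecke algebra, with the group cohomology $H^1(\G,V_{k-2}\otimes\chi)$ (with $S_k^c(\G,\chi)$ the antiholomorphic cusp forms that complete $M_k$ to the image of $H^1$), so that $\tr\bigl([\SS],M_k(\G,\chi)+S_k^c(\G,\chi)\bigr)=\tr\bigl([\SS],H^1(\G,V_{k-2}\otimes\chi)\bigr)$, up to a correction in weight $2$ that I return to below.

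Next I would reduce from $\G$ to $\G_1=\SL_2(\Z)$ by Shapiro's lemma: $H^1(\G,V_{k-2}\otimes\chi)\cong H^1(\G_1,W)$, where $W=\ind_{\G}^{\G_1}(V_{k-2}\otimes\chi)=V_{k-2}\otimes\C[\ov\G\backslash\ov\G_1]$ carries the twisted $\G_1$-action. The hypothesis $|\G\backslash\SS|=|\G_1\backslash\G_1\SS|$ is exactly what guarantees that $[\SS]$ transports to the natural Hecke operator attached to $\G_1\SS\G_1$ on $H^1(\G_1,W)$ with no change of multiplicity, which is why the final sum runs over $\ov\G_1$-conjugacy classes in $\ov{\G_1\SS\G_1}$ and the invariant $\e(\X)$ is taken relative to $\ov\G_1$. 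Unwinding the induced action shows that the coset-and-character part of the local trace of an element $M$ on $W$ is precisely $\cc_{\G,\SS}^\chi(M)$ of~\eqref{1.10}: summing over $A\in\ov\G\backslash\ov\G_1$ records which conjugates $\pm AMA^{-1}$ fall in the support $\SS$, weighted by $(\pm1)^k\wc$, while the determinant normalizations $\det\ss^{k-1}$ in~\eqref{hecke} are absorbed into the identification of the $V_{k-2}$-traces.

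The core computation is then the trace of a Hecke correspondence on $H^1(\G_1,W)$. Since $\G_1$ is virtually free and acts on the contractible half-plane $\H$, one has $H^i(\G_1,-)=0$ for $i\ge 2$, so the Lefschetz number is $\tr(H^0)-\tr(H^1)$ and a topological orbifold fixed-point formula expresses it as a sum of local terms indexed by the $\ov\G_1$-conjugacy classes $\X$ of elements $M\in\G_1\SS\G_1$. Each local term factors as $\tr(M\mid V_{k-2})\cdot\cc^\chi_{\G,\SS}(M)=p_{k-2}(\tr M,\det M)\,\cc^\chi_{\G,\SS}(M)$ times a geometric weight depending only on the type of $M$, and the signs collapse to the uniform invariant $\e(\X)=\sgn\DD(\X)/|\stab_{\ov\G_1}M_\X|$: scalars contribute through the orbifold Euler characteristic of $\ov\G_1\backslash\H$, giving the weight $1/6$; elliptic $M$ (negative discriminant, finite stabilizer) give the isolated interior-fixed-point weight $-1/|\stab_{\ov\G_1}M_\X|$; hyperbolic $M$ fixing two cusps (positive square discriminant, hence trivial stabilizer in $\ov\G_1$) give $+1$; and parabolic or real-multiplication hyperbolic classes give $0$ (via $\sgn 0$, respectively an infinite stabilizer). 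Finally $\tr([\SS],H^0(\G_1,W))$ vanishes unless $k=2$ and $\chi=\mathbf 1$, in which case $V_0=\C$ and the $\G_1$-invariants contribute $\sum_{\ss\in\G\backslash\SS}\wc(\ss)$; this is the source of the term $\delta_{k,2}\delta_{\chi,\mathbf 1}\sum_{\ss}\wc(\ss)$.

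The main obstacle is that $\ov\G_1\backslash\H$ is noncompact, so the naive Lefschetz formula does not apply directly and the boundary (cusp) contributions must be controlled: one has to show that the hyperbolic classes fixing two cusps contribute exactly $+1$ and the parabolic classes exactly $0$, i.e.\ that the Eisenstein and boundary part of the cohomology behaves as claimed. This is precisely the delicate point, and it is the computation the paper isolates and treats separately through the Eisenstein trace formula of Section~\ref{secEis}. A secondary source of care is the bookkeeping of the sign choices $(\pm1)^k$ and the determinant weights needed to reproduce $\cc^\chi_{\G,\SS}$ exactly, together with the verification that the Hecke action is Shapiro-compatible under the stated hypothesis. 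A fully algebraic alternative, which avoids the analysis entirely, is to compute $H^1(\G_1,W)$ at the chain level using the (amalgamated) free-product structure of $\G_1$ with the elliptic generators $S$ and $ST$; this turns the trace into a finite linear-algebra computation, at the cost of replacing the boundary analysis by explicit cocycle identities.
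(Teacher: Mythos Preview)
This theorem is not proved in the present paper: it is quoted from the author's earlier work~\cite{P} and is taken as input throughout. There is therefore no proof here against which to compare your proposal.

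Your cohomological sketch (Eichler--Shimura to identify $M_k\oplus S_k^c$ with $H^1(\G,V_{k-2}\otimes\chi)$, Shapiro to reduce to $\G_1$, then a Lefschetz-type count over conjugacy classes, with the $H^0$ correction producing the $\delta_{k,2}\delta_{\chi,\mathbf 1}$ term) is a reasonable outline, and the equivalent form~\eqref{TFF} as a sum over $\ov\G$-conjugacy classes with weights $\e_\G(\X)$ is consistent with such an approach underlying~\cite{P}. One misreading, however: you say the delicate boundary analysis ``is the computation the paper isolates and treats separately through the Eisenstein trace formula of Section~\ref{secEis}.'' That is not the role of Section~\ref{secEis}. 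Section~\ref{secEis} does not contribute to the proof of Theorem~\ref{T2}; it \emph{assumes} Theorem~\ref{T2} (equivalently~\eqref{TFF}) and computes the Eisenstein trace independently (Theorem~\ref{T5.1}) in order to subtract it and extract the cuspidal formula of Theorem~\ref{TSS}. The facts you flag as the main obstacle --- that parabolic classes contribute $0$ and split-hyperbolic classes contribute $+1$ in $\e(\X)$ --- are part of the proof carried out in~\cite{P}, not in this paper.
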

Here $S_k^c(\G,\chi))$ is the space of anti-holomorphic cusp forms. 
From now on we assume for simplicity that $\G$, $\SS$ and $\chi$ are invariant under 
conjugation by an order 2 element of determinant~$-1$ (an assumption 
satisfied for most congruence subgroups, in particula for $\G_0(N)$), so that 
we can replace the space~$S_k^c(\G,\chi)$ by $S_k(\G,\chi)$ in the theorem 
(see \cite[Remark~3.2]{P}). 

To state the formula on the cuspidal subspace, 
let $\G_\aa\subset \G$ and $\SS_\aa\subset \SS$ be the stabilizers of a cusp $\aa$ of $\G$. 
  Let $C(\G)$ be a set of representatives 
for the $\G$-equivalence classes of cusps and define the subset
  \[ C(\G,\chi)= \{\aa\in C(\G)\;:\; \chi(\g)=\sgn(\g)^k \text{ if $\g\in\G_\aa$} \}\;, \]
where $\sgn(\ss):=\sgn(\tr\ss)$ denotes the sign of the eigenvalues of 
a parabolic or hyperbolic matrix $\ss\in\GL_2^+(\R)$. 
\begin{thm}\label{TSS}With the notations of Theorem~\ref{T2} we have  
 \be\label{TFS2} \begin{split}
 \tr([\SS], S_k(\G,\chi))=
 \frac{1}{2}\sum_{\substack{\X,\, \DD(\X)\le 0}}p_{k-2}(\tr M_\X, \det M_\X) 
 \;\cc_{\G,\SS}^\chi(M_\X)\; \e(\X) \\
 - \frac{1}{2}\sum_{\aa\in C(\G,\chi)}\sum_{ \ss\in\G_\aa\backslash \SS_\aa/\G_\aa }
  \frac{ \min(|\lambda_\ss|,|\lambda_\ss'| )^{k-1} }{(|\lambda_\ss|,|\lambda_\ss'|) }  \sgn(\ss)^k \wc(\ss)
  +\delta_{k,2}\delta_{\chi,{\bf 1}}\; \sum_{\ss\in\G\backslash\SS} \wc(\ss)\;,
 \end{split} \ee
where $\lambda_\ss, \lambda_\ss'$ are the eigenvalues of $\ss$, and for $a,d\in\R$ with $a/d\in\Q$ we let
$(a,d)$ be the unique positive number such that $\Z a+\Z d=\Z(a,d)$.\footnote{We show in \eqref{5.4} that
$\lambda_\ss/\lambda_\ss' \in \Q$ for $\ss\in\SS_\aa$.} 
 \end{thm}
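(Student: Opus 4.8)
The plan is to extract the cuspidal trace from the combined trace of Theorem~\ref{T2} by subtracting off the contribution of the Eisenstein subspace. I would start from the decomposition $M_k(\G,\chi)=S_k(\G,\chi)\oplus\E_k(\G,\chi)$ into cusp forms and Eisenstein series. Since the operator $[\SS]$ preserves $S_k(\G,\chi)$, in a basis of $M_k$ adapted to $S_k$ its matrix is block upper triangular, so traces are additive: $\tr([\SS],M_k)=\tr([\SS],S_k)+\tr([\SS],\E_k)$, the last trace being that of the operator induced on $M_k/S_k\cong\E_k$. Combined with the definition~\eqref{0} of $\tr([\SS],M_k+S_k)$ this gives
\[ \tr([\SS],M_k+S_k)=2\,\tr([\SS],S_k)+\tr([\SS],\E_k), \]
whence
\[ \tr([\SS],S_k(\G,\chi))=\tfrac12\,\tr([\SS],M_k+S_k)-\tfrac12\,\tr([\SS],\E_k). \]
For the first term on the right I would quote Theorem~\ref{T2} directly, so the entire problem reduces to evaluating the Eisenstein trace $\tr([\SS],\E_k)$.

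To compute $\tr([\SS],\E_k)$ I would work with the basis of $\E_k(\G,\chi)$ given by the Eisenstein series $E_\aa$ attached to the cusps $\aa\in C(\G,\chi)$; the defining condition $\chi(\g)=\sgn(\g)^k$ on $\G_\aa$ is precisely the condition under which a nonzero weight-$k$, character-$\chi$ Eisenstein series exists at $\aa$. Writing $E_\aa|[\SS]=\sum_{\bb}m_{\bb\aa}E_\bb$, the trace is $\sum_\aa m_{\aa\aa}$, and each diagonal coefficient is read off from the constant term of $E_\aa|[\SS]$ at the cusp $\aa$. Only the matrices $\ss\in\SS$ stabilizing $\aa$ feed this constant term; conjugating $\aa$ to $\infty$ by a scaling matrix renders each such $\ss$ upper triangular with eigenvalues $\lambda_\ss,\lambda_\ss'$, and the slash action rescales the pieces of the constant term by powers of these eigenvalues. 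Summing over the double cosets $\G_\aa\backslash\SS_\aa/\G_\aa$, the stabilizer elements yield the boundary term weighted by $\min(|\lambda_\ss|,|\lambda_\ss'|)^{k-1}/(|\lambda_\ss|,|\lambda_\ss'|)$, while the hyperbolic elements (which fix a second cusp) also reorganize into the $\ov\G_1$-conjugacy classes with $\DD(\X)>0$ fixing two cusps, for which $\e(\X)=1$. The weight $k=2$ case needs separate treatment, since there the Eisenstein series fail to be independent and holomorphic, producing the correction $\delta_{k,2}\delta_{\chi,{\bf 1}}$ term. This computation is the content of the self-contained Theorem~\ref{T5.1}.

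Substituting the Eisenstein trace into $\tr([\SS],S_k)=\tfrac12\tr([\SS],M_k+S_k)-\tfrac12\tr([\SS],\E_k)$ and inserting Theorem~\ref{T2}, the terms indexed by hyperbolic classes fixing two cusps — which carry $\e(\X)=1$ and $\DD(\X)>0$ — occur with the same coefficient in both traces and cancel. From the geometric sum there survives exactly $\tfrac12$ times the sum over classes with $\DD(\X)\le0$, namely the scalar classes ($\e=1/6$) and the elliptic classes ($\e=-1/|\stab_{\ov\G_1}M_\X|$); parabolic classes contribute nothing because $\e(\X)=0$. The boundary sum survives with coefficient $-\tfrac12$, and the two $\delta_{k,2}$ contributions combine into the single term of~\eqref{TFS2}, giving Theorem~\ref{TSS}. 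The main obstacle is the Eisenstein trace formula itself: pinning down the exact eigenvalue factor $\min(|\lambda_\ss|,|\lambda_\ss'|)^{k-1}/(|\lambda_\ss|,|\lambda_\ss'|)$ from the slash action, controlling the $k=2$ anomaly, and carrying out the analysis for an arbitrary Fuchsian group of the first kind with cusps rather than just the congruence groups. Once that is established, the passage to the cuspidal trace is pure bookkeeping.
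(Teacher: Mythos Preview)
Your proposal is correct and follows essentially the same route as the paper: split $\tr([\SS],M_k+S_k)=2\tr([\SS],S_k)+\tr([\SS],\E_k)$, invoke Theorem~\ref{T2} for the left side, compute the Eisenstein trace via Theorem~\ref{T5.1}, and observe that the hyperbolic $\DD(\X)>0$ contribution in Theorem~\ref{T2} matches $\tr([\SS],\E_k)$ up to the $\min$-weighted boundary sum and the $\delta_{k,2}$ correction. The only point to sharpen is that this ``cancellation'' is not term-by-term but comes from the algebraic identity $\sum_{a,d>0}a^{k-1}\Phi(a,d)=\sum_{d>a>0}(d^{k-1}-a^{k-1})\Phi(a,d)+\sum_{a,d>0}\min(a,d)^{k-1}\Phi(a,d)$, which uses the symmetry of $\Phi$ established in Theorem~\ref{T5.1}(b).
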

\begin{remark*}
The sum over cusps in \eqref{TFS2} can be written more explicitly as follows. 
Fix a scaling matrix $C_\aa$ for the cusp $\aa$, namely $C_\aa \aa=\infty$ and 
  \[\{\pm 1\}\cdot C_\aa \G_\aa C_\aa^{-1} =\{\pm \sm 1n01 \;:\; n\in\Z\}.\] 
For $a,d>0$ we set $\SS_\aa(a,d):=\{\ss\in\SS_\aa \;: \; C_\aa\ss C_\aa^{-1} 
=\sgn(\ss) \sm a*0d \}$  and 
\be\label{1.8} \Phi_{\G,\SS}^\chi(a,d):=\frac{1}{(a,d)} \sum_{\aa\in C(\G,\chi)}
  \sum_{ \ss\in\G_\aa\backslash \SS_\aa(a,d)/\G_\aa }\sgn(\ss)^k \wc(\ss) \;. 
\ee
 We show in Theorem~\ref{T5.1} that $\Phi_{\G,\SS}^\chi (a,d)$ is symmetric in 
$a,d$, and we can rewrite the sum over cusps $\aa$ in~\eqref{TFS2} as follows
  \be\label{1.7}\sum_{a,d>0}\min(a,d)^{k-1}\Phi_{\G,\SS}^\chi (a,d)\;.\ee
It is the function $\Phi_{\G,\SS}^\chi (a,d)$ that we will compute
for $\G=\G_0(N)$. 
\end{remark*}

\subsection{Explicit trace formulas in terms of class numbers}\label{sec1.3}
The class numbers appear 
naturally in terms of the conjugacy class invariants $\e(\X)$, and we first extend 
both the usual and the Kronecker-Hurwitz class numbers to all integers. 

Let 
$\M_n\subset M_2(\Z)$ denote the set of matrices of determinant $n$.  
The $\G_1$-equivariant bijection $M=\sm abcd \leftrightarrow Q_M(x,y)=cx^2+(d-a)xy-by^2 $
between matrices of determinant $n$ and trace $t$ and binary quadratic forms of discriminant 
$t^2-4n$ implies that for all $D$ and for $u\ge 1$:
\begin{equation}\label{13}
\sum_{\substack{\X\subset\ov{\M}_n\\ \DD(\X)=D, u|G_\X }} \e(\X)  = 
\begin{cases}
-2 H(-D/u^2) & \text{ if  } \tr(\X)\ne 0 \\
-H(-D/u^2) & \text{ if  } \tr \X= 0 
\end{cases}\;,
\end{equation}
where $G_\X$ is the content of the quadratic form $Q_{M}$ associated to any 
representative $M$ of $\X$, and $H(D)$ is the 
Kronecker-Hurwitz class number for $D\ge 0$, as extended by Zagier to 
all~$D$~\cite{Z1, Z}. We recall that if $D>0$, $H(D)$ is the the number of 
$\G_1$-equivalence classes of positive definite binary quadratic forms of 
discriminant $-D$, the forms with a stabilizer of order 2 or 3 in $\G_1$ being 
counted with multiplicity $1/2$ or $1/3$; $H(0)=-1/12$; and if $D<0$, $H(D)$ is 
$-u/2$ if $D = -u^2$ with $u \in \Z_{>0}$, and it is 0 if $-D$ is not a perfect 
square. 

Similarly, let $h(D)$ be the number of \emph{primitive} quadratic forms of discriminant $D<0$, and set
  \[h_0(D)=\frac{2h(D)}{w(D)},\] 
where $w(D)$ is the number of units of the quadratic order of discriminant~$D$. 
We extend the definition to all $D$ by setting
$h_0(0)=-1/12$, $h_0(u^2)=-\vp(u)/2$ if $u>0$ is an integer, and $h_0(D)=0$ if $D$ is not a negative discriminant or a square. 
This extension of $h_0(D)$ is compatible to that of $H(D)$, in the sense that for all $D$ we have
  \be\label{inv} H(-D)=\sum_{d^2|D} h_0\Big( \frac{D}{d^2}\Big), 
  \quad h_0(-D)= \sum_{d^2| D} H\Big( \frac{D}{d^2}\Big)\mu(d),  \ee
with the understanding that for $D=0$ only  
$d=1$ is included in both sums. It follows that for all $D\ne 0$ and for $u\ge 1$ we have 
\be\label{13.0}
\sum_{\substack{\X\subset\ov{\M}_n\\ \DD(\X)=D, G_\X=u }} \e(\X)  = 
\begin{cases}
-2 h_0(D/u^2) & \text{ if  } \tr(\X)\ne 0 \\
-h_0(D/u^2) & \text{ if  } \tr \X= 0 
\end{cases}\;, \ee
and with the convention that $0/0^2=0$ this formula also holds for $D=0$ and $u=0$. 

The class numbers enter the trace formula under the following assumption on the 
function~$\cc_{\G,\SS}^\chi$ in Theorems~\ref{T2} and~\ref{TSS}. Let 
$G_M=\gcd(c,d-a,b)$ be the content of the quadratic form $Q_M$ associated to the matrix $M$.
\begin{hypo}\label{h}
If $\G$ is a congruence subgroup of level $N$, then the 
function  $\cc_{\G, \SS}^\chi(M)$ in Theorem~\ref{T2} only depends on the conjugacy class invariants 
$\tr M$, $\det M$, and $(N,G_M)$, namely there is an arithmetic function $B_{\G, \SS}^\chi(u,t,n)$ such that
\[ \cc_{\G, \SS}^\chi(M)=B_{\G, \SS}^\chi((N,G_M), \tr M, \det M 
),\quad \text{ for all } M\in \M. \]
\end{hypo}
The assumption is natural, as the function~$\cc_{\G,\SS}^\chi$ is constant 
on~$\G_1$-conjugacy classes in $\M$, and we will see that it is satisfied 
for the double cosets giving the action of the usual Hecke and Atkin-Lehner 
operators for~$\G=\G_0(N)$.

Moebius inversion gives a function $C_{\G, \SS}^\chi(u,t,n)$ such that 
  \be\label{ass2}  B_{\G, \SS}^\chi(u,t,n)=\sum_{d|u} C_{\G, \SS}^\chi(d,t,n),
  \quad C_{\G, \SS}^\chi(u,t,n)=\sum_{d|u} B_{\G, \SS}^\chi(u/d,t,n)\mu(d),\ee
where we assume $u|N$ in both formulas. 
Note that the functions $B_{\G, \SS}^\chi, C_{\G, \SS}^\chi$ are only defined
on triples $(u,t,n)$ with $u|N$, $u^2|t^2-4n$, and they scale by $(-1)^k$ when $t$ is replaced by $-t$. 

Let $\SS\subset \M_n$ be a double coset, with $\M_n\subset \M$ consisting of 
matrices of determinant $n$. Under Assumption~\ref{h}, the sum over $\G_1$-conjugacy classes
  \be\label{1.17}\mathrm{RHS}:= \sum_{\X\subset\ov{\M}_n}\!\!\! 
  p_{k-2}(\tr M_\X, n) \; \cc_{\G, \SS}^\chi(M_\X)\; \e(\X)\ee
in the right hand side of \eqref{TF2} becomes\footnote{ In all formulas in this 
paper we adopt the convention that arithmetic functions are zero on nonintegers. 
For example the sums over $u$ in \eqref{1.1} and \eqref{1.2} are restricted to 
$u^2|t^2-4n$.}
  \be \begin{aligned}\label{1.1}
  \mathrm{RHS}
  &= \sum_{t\in \Z/\{\pm 1\}} p_{k-2} (t, n)\sum_{u|N}C_{\G, \SS}^\chi(u,t,n)
  \hspace{-.4cm}\sum_{\substack{\X\subset \ov{\M}_n \\ \tr(\X)=t, u| G_\X}} \hspace{-.4cm} \e(\X)\\
  &= -\sum_{t\in\Z}  p_{k-2} (t, n) \sum_{u|N}H\Big(\frac{4n-t^2}{u^2}\Big)C_{\G, \SS}^\chi(u,t,n).
  \end{aligned}\ee
In the second equality we used \eqref{13}, and the fact that $C_{\G, \SS}^\chi$ scales by $(-1)^k$ when 
$t$ is replaced by~$-t$, just like $p_{k-2} (t, n)$. 

Similarly using the extended class numbers $h_0(D)$ from~\eqref{13.0} we have 
  \be \begin{aligned}\label{1.2}
  \mathrm{RHS}
  &=\sum_{ t\in \Z/\{\pm 1\} } p_{k-2} (t, n)\sideset{}{^{\prime}}\sum_{u^2|t^2-4n}
  B_{\G, \SS}^\chi((N,u),t,n) 
  \hspace{-.4cm} \sum_{\substack{\X\subset \ov{\M}_n \\ \tr(\X)=t, G_\X=u}} \hspace{-.4cm}  \e(\X)  \\
  &= -\sum_{t\in \Z}  p_{k-2} (t, n) \sum_{u}h_0\Big(\frac{t^2-4n}{u^2}\Big)B_{\G, \SS}^\chi((N,u),t,n),
  \end{aligned}\ee 
with the understading that only $u=N$ is to be considered in the summation 
if $t^2-4n=0$. Note that if we restrict 
the sum over $\X$ in \eqref{1.17} to elliptic conjugacy classes, the range of 
summation in $t$ in the last terms of \eqref{1.1} and \eqref{1.2} becomes 
$t^2<4n$. Thus we obtain two explicit expressions in terms of class numbers of 
the right hand side of~\eqref{TFS2}.

\begin{remark}\label{r1.2}
The case $k=2$ of the trace formula reduces to the Kronecker-Hurwitz 
class number formula. Indeed, for $k=2$, $\G=\G_1$, $\wc=1$, 
and $\SS=\M_n$, the left side of~\eqref{TF2} vanishes, 
and using~\eqref{1.1} we obtain the Kronecker-Hurwitz relation: 
  $\sum_{t\in\Z} H(4n-t^2)=\sigma_1(n)\;.$ 
Taking~$\SS$ to be any $\G_1$-double coset instead of $\M_n$, the case $k=2$, 
$\G=\G_1$ of the trace formula~\eqref{TF2} gives a group-theoretical 
way of writing the Kronecker-Hurwitz formula in terms of the invariants $\e(\X)$:
  \[ \sum_{\X\subset \ov{\SS}} \e(\X) = -|\G_1\backslash\SS|\;,   \]
with the sum over $\ov{\G}_1$-conjugacy classes in $\ov{\SS}$. Together with 
D. Zagier, we give an elementary proof of a refinement of this formula
in~\cite{PZ1}.
\end{remark}

\subsection{Trace formulas on $\G_0(N)$ and $\G_1(N)$.}
We now specialize $\G=\G_0(N)$. Let~$k\ge 2$, $\chi$ 
a character modulo $N$ with $\chi(-1)=(-1)^k$, and let $\SS=\Delta_n$ be 
the double coset of the usual Hecke operator acting on 
$S_k(N,\chi):=S_k(\G,\chi)$, given in \eqref{delta}. 
Changing notation $\cc_{N, \chi}(M)=\cc_{\G,\SS}^\chi(M)$, we have
\be\label{6}
\cc_{N, \chi}(M)=\sum_{\substack{A\in \G\backslash\G_1 \\ AMA^{-1} \in \Delta_n}}
\chi(a_{AMA^{-1}})\;.
\ee
This function was computed by Oesterl\'e \cite{O77} (Lemma \ref{L9} below),
who showed that it only depends on the conjugacy class invariants
$t=\tr M$, $n=\det M$, and $u=(G_M,N)$: 
  \[ \cc_{N, \chi}(M)=B_{N,\chi}(u,t,n):=\frac{\varphi_1(N)}{\varphi_1(N/u)}\sum_{x\in S_{N}(u,t,n)} \chi(x), \]
where $S_{N}(u,t,n)=\{\alpha\in (\Z/N\Z)^\times: \ \alpha^2-t\alpha +n\equiv 0 \pmod{Nu}
\}$,\footnote{This set is well defined: if $\alpha\in \Z$ satisfies $\alpha^2-t\alpha +n\equiv 0 \pmod{Nu}$, 
so does $\alpha+Nd$ for every $d$, because of the assumption  $u|N$,
$u^2|t^2-4n$.} and $\varphi_1(N)$ is the index of
$\G_0(N)$ in $\G_1$, equal to $N\prod_{p|N}(1+1/p)$. Notice that $B_{N,\chi}(u,t,n)$ is multiplicative in $N$, 
%namely\begin{equation}\label{15}
%B_{N,\chi}(u,t,n)=\prod_{p^a\|N}B_{p^a,\chi_p}((u,p^a),t,n)
%\end{equation}
so its Moebius inverse 
  \be\label{16} C_{N,\chi}(u,t,n):=\sum_{d|u} B_{N,\chi}(u/d,t,n)\mu(d)  \ee
is also multiplicative, and it can be easily computed numerically. 

We can now state the trace formula on the cuspidal subspace, which follows 
from Theorem~\ref{TSS} after expressing the right hand side as in~\eqref{1.1}, 
and using the formula for the cuspidal sum~$\Phi_{N,\chi}(a,d)$ proved
in~\S\ref{sec5.1}. Note that the  simpler formula for 
the linear combination~\eqref{0}, stated in the introduction, holds also 
for characters $\chi$ with $\chi(-1)\ne (-1)^k$, when both sides vanish. 
\begin{thm} \label{T3} 
Let $N\ge 1$ and $k\ge 2$ be integers, and $\chi$ a character mod $N$ with
 $\chi(-1)=(-1)^k$. With the function $C_{N,\chi}(u,t,n)$ defined above, we have
\begin{equation*}%\label{TF11}
\begin{split} 
\tr(T_n,S_k(N,\chi))= 
\;-\; \frac{1}{2}\sum_{t^2\le 4n}p_{k-2}(t,n)\cdot
\sum_{u|N} H\Big(\frac{4n-t^2}{u^2}\Big) C_{N,\chi}(u,t,n)\\
-\frac 12 \sum_{n=ad}\min(a,d)^{k-1}\Phi_{N,\chi}(a,d)
%\;+\;\frac{\vp_1(N)}{12}(k-1)n^{k/2-1}\chi(\sqrt{n}) 
+\delta_{k,2}\delta_{\chi,{\bf 1}}\sigma_{1,N}(n)\;,
\end{split}
\end{equation*}
where $\; \sigma_{1,N}(n)=\sum_{d|n,(N,d)=1} n/d\; $, and 
  $$\Phi_{N,\chi}(a,d)=\sum_{\substack{N=rs\\(r,s)|(N/c(\chi),a-d )}} 
  \varphi((r,s))\; \chi(\alpha_{r,s}^{a,d}),$$ 
where $\alpha=\alpha_{r,s}^{a,d}$ is the residue class modulo $N/(r,s)$ such that
$\alpha\equiv a \pmod{r}, \alpha\equiv d \pmod{s}$, $c(\chi)$ is the conductor of
$\chi$, and $\varphi$ denotes Euler's function.
\end{thm}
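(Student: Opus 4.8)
The plan is to read Theorem~\ref{T3} off from Theorem~\ref{TSS} specialized to $\G=\G_0(N)$ and $\SS=\DD_n$ (so that $\wc(\ss)=\chi(a_\ss)$), matching the three lines of the claimed identity against the three terms of~\eqref{TFS2}: the conjugacy-class sum over $\DD(\X)\le 0$, the sum over cusps, and the $\delta_{k,2}\delta_{\chi,\mathbf 1}$ boundary term. For the first line I would invoke Oesterl\'e's computation (Lemma~\ref{L9}), which shows that $\cc_{N,\chi}$ satisfies Assumption~\ref{h} with the explicit multiplicative function $B_{N,\chi}(u,t,n)$, so that its Moebius inverse is the $C_{N,\chi}(u,t,n)$ of~\eqref{16}. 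Since $\det M_\X=n$ throughout, I would then apply the reduction~\eqref{1.1}, but with the sum over $\X$ restricted to $\DD(\X)\le 0$ and carrying the prefactor $\tfrac12$ from~\eqref{TFS2}. Because $\e(\X)=0$ on parabolic classes (where $\sgn\DD(\X)=0$), only elliptic and scalar classes survive; the elliptic ones give $t^2<4n$ as in the remark after~\eqref{1.2}, while the scalar ones contribute at $t^2=4n$, so the range becomes $t^2\le 4n$. This produces exactly the first line, with its factor $-\tfrac12$.

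The second line comes from the sum over cusps in~\eqref{TFS2}, and here I would use the rewriting supplied in the Remark after Theorem~\ref{TSS}: grouping the stabilizer elements $\ss$ according to their conjugated diagonal $(a,d)$ turns that sum into~\eqref{1.7}, governed by the cuspidal function $\Phi^\chi_{\G,\SS}(a,d)$ of~\eqref{1.8}. For $\SS=\DD_n$ every $\ss\in\SS_\aa(a,d)$ has eigenvalues $a,d>0$ with $ad=\det\ss=n$, so the sum over $a,d>0$ collapses to a sum over factorizations $n=ad$, and writing $\Phi_{N,\chi}:=\Phi^\chi_{\G_0(N),\DD_n}$ yields the second line. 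The remaining task is to evaluate $\Phi_{N,\chi}(a,d)$ explicitly, which I would defer to \S\ref{sec5.1}.

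The third line is a short count: when $\delta_{\chi,\mathbf 1}=1$ the character is trivial and $\wc(\ss)=1$, so the boundary term is just the index $|\G_0(N)\backslash\DD_n|$. Using the standard upper-triangular representatives $\sm a b 0 d$ with $ad=n$, $(a,N)=1$ and $0\le b<d$, this cardinality equals $\sum_{a\mid n,\,(a,N)=1} n/a=\sigma_{1,N}(n)$, as claimed.

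The main obstacle is therefore the explicit computation of $\Phi_{N,\chi}(a,d)$. I expect to parametrize the cusps of $\G_0(N)$ by pairs $(r,\alpha)$ with $r\mid N$, $s=N/r$, and $\alpha$ running over a set of size $\vp((r,s))$, choose a scaling matrix $C_\aa$ for each, and identify the elements of $\DD_n$ fixing $\aa$ whose $C_\aa$-conjugate has diagonal $(a,d)$. Evaluating $\wc(\ss)=\chi(a_\ss)$ on these elements should produce $\chi(\alpha^{a,d}_{r,s})$ through the Chinese Remainder Theorem, whose solvability forces $(r,s)\mid(a-d)$; the regularity condition $\aa\in C(\G_0(N),\chi)$ built into the definition of $C(\G,\chi)$ should translate into $(r,s)\mid N/c(\chi)$; and summing over the $\vp((r,s))$ cusps of each type should account for the weight $\vp((r,s))$ in the stated formula. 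The delicate part I anticipate is the bookkeeping that balances the normalizing factor $1/(a,d)$ in~\eqref{1.8} against the sizes of the double-coset quotients $\G_\aa\backslash\SS_\aa(a,d)/\G_\aa$, so that the final expression for $\Phi_{N,\chi}(a,d)$ is symmetric in $a,d$ and matches the claimed sum over $N=rs$.
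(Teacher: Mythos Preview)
Your proposal is correct and follows essentially the same route as the paper: specialize Theorem~\ref{TSS} to $\G_0(N)$ and $\DD_n$, invoke Lemma~\ref{L9} to verify Assumption~\ref{h} and feed the resulting $B_{N,\chi}$, $C_{N,\chi}$ into~\eqref{1.1} restricted to $t^2\le 4n$, rewrite the cusp sum via~\eqref{1.7}, and compute $\Phi_{N,\chi}(a,d)$ by the cusp parametrization $N=rs$ with $\vp((r,s))$ classes each (this is Lemma~\ref{Ta2}, whose proof matches your sketch, the paper routing it through the auxiliary formula~\eqref{42} of Lemma~\ref{p6.1}). Your identification of the boundary term with $\sigma_{1,N}(n)$ and of the regularity condition with $(r,s)\mid N/c(\chi)$ are also exactly what the paper does.
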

The terms for $t^2=4n$ are explictly computed in Remark~\ref{r4.6}: 
they are nonzero only when $n$ is a square, when they contribute 
$\frac{\vp_1(N)}{12}(k-1)n^{k/2-1}\chi(\sqrt{n})$. 

\begin{remark*}An equivalent formula for the cuspidal trace was obtained by 
Cohen and Oesterl\'e by analytic means~\cite{C77, O77}, with the sum over $u$ written as 
in~\eqref{1.2} in terms of $B_{N,\chi}$.
\end{remark*}

We also give a formula for the trace of $T_n\circ W_\ell$ on $S_k(N)$,
with $W_\ell$ the Atkin-Lehner operator for an exact divisor $\ell$ of $N$.  
Let $C_N(u,t,n)=C_{N, \bf{1}}(u,t,n)$ in~\eqref{16}, with ${\bf 1}$ the 
trivial character mod~$N$. A different formula for 
$\tr(T_n\circ W_\ell,S_k(N))$ was given by Skoruppa and Zagier~\cite{SZ}, 
but assuming that $(n,N)=1$. 
\begin{thm} \label{T4}
Let $N=\ell \ell' $ with $(\ell,\ell')=1$ and $k\ge 2$ even, $w=k-2$. For \emph{all} $n\ge 1$ we have
\[\begin{split}
\tr(T_n\circ W_\ell,S_k(N))=
-\frac{1}{2}\sum_{\substack{t^2\le 4 \ell n \\ \ell|t}}\!\! \frac{p_{w}(t,\ell n)}{\ell^{w/2}} 
\cdot  \sum_{\substack{u|\ell\\ u'|\ell'}} H\Big(\frac{4\ell n-t^2}{(uu')^2}\Big) C_{\ell'}\big(u',t,\ell n\big) \mu(u)\\
-\frac 12  \sum_{\substack{n\ell=ad\\ \ell|a+d}} \frac{\min(a,d)^{k-1}}{\ell^{w/2}}\Phi_{N,\ell}(a,d)
%+\delta_{\ell|\sqrt{n}}\delta_{(n,\ell'),1}\frac{\vp_1(N)}{12}(k-1)\frac{n^{w/2}}{\ell^{w/2}}
+\delta_{k,2}\sigma_{1,N}(n)\;,
\end{split}\]
where
\[\Phi_{N,\ell}(a,d)=\frac{\varphi(\ell)}{\ell}\sum_{\substack{\ell'=rs,\ (r,s)|a-d\\   
(r,a)=1, (s,d)=1}} \varphi((r,s))\;.\] 
\end{thm}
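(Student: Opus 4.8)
The plan is to exhibit $T_n\circ W_\ell$ as a (rescaled) double coset operator and feed it into the cuspidal trace formula of Theorem~\ref{TSS}. Let $w_\ell=\sm{\ell a}{b}{Nc}{\ell d}$ be an Atkin--Lehner matrix, with $\det w_\ell=\ell$ and $\ell ad-\ell' bc=1$; since $w_\ell$ normalizes $\G_0(N)$, the set $\SS:=\DD_n w_\ell\subset\M_{\ell n}$ is again a $\G_0(N)$-stable subset of matrices of determinant $\ell n$ satisfying the index condition of Theorem~\ref{T2}. Locally at a prime $p\mid\ell$ a $\DD_n$-matrix is upper triangular with a unit on the top-left, while $w_\ell$ is nilpotent modulo $p$; their product is nilpotent modulo $p$, which forces $\ell\mid\tr\ss$ for every $\ss\in\SS$ and is the source of the condition $\ell\mid t$ in the statement. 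Because $w_\ell^2\in\ell\,\G_0(N)$, the standard normalization making $W_\ell$ an involution gives $[\SS]=\ell^{w/2}\,(T_n\circ W_\ell)$, whence $\tr(T_n\circ W_\ell,S_k(N))=\ell^{-w/2}\tr([\SS],S_k(N))$; this produces the global factor $\ell^{-w/2}$ in every line. As $k$ is even and the character is trivial, the multiplier reduces to $(\pm1)^k=1$. Applying Theorem~\ref{TSS} then splits the trace into an elliptic/central sum over classes $\X$ with $\DD(\X)\le0$, a cuspidal sum of the shape \eqref{1.7}--\eqref{1.8}, and the $\delta_{k,2}$ term, which yields $\sigma_{1,N}(n)$ exactly as in Theorem~\ref{T3}.

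For the elliptic/central sum I would verify Assumption~\ref{h} for $\SS$ and identify the function $\cc_{\G,\SS}(M)=B_{\G,\SS}((N,G_M),\tr M,\det M)$, then invoke \eqref{1.1}. Since $B_{\G,\SS}$ is multiplicative in $N$, I factor the computation along $N=\ell\ell'$. At primes dividing $\ell'$ the local double coset is precisely that of the Hecke operator of determinant $\ell n$, so this factor reproduces Oesterl\'e's function (Lemma~\ref{L9}) and its M\"obius inverse is $C_{\ell'}(u',t,\ell n)$. At primes dividing $\ell$ the Atkin--Lehner twist replaces the Hecke count by the single-coset count coming from the involution $w_\ell$; carrying out this local counting and M\"obius-inverting should collapse Oesterl\'e's local factor to the clean weight $\mu(u)$ with $u\mid\ell$. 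Splitting $u\mapsto uu'$ with $u\mid\ell$, $u'\mid\ell'$ in \eqref{1.1} and converting the $\e(\X)$-sum into $H\big((4\ell n-t^2)/(uu')^2\big)$ via \eqref{13} then delivers the first displayed line, the range $t^2\le 4\ell n$ arising from retaining only the classes with $\DD(\X)=t^2-4\ell n\le0$.

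For the cuspidal contribution I would compute $\Phi^{}_{\G,\SS}(a,d)$ from its explicit description \eqref{1.8}, tracking how $w_\ell$ permutes the cusps of $\G_0(N)$ and how the parabolic data $\G_\aa\backslash\SS_\aa(a,d)/\G_\aa$ factors through a decomposition $\ell'=rs$ of the away-from-$\ell$ part. The local behaviour at $\ell$ contributes the prefactor $\varphi(\ell)/\ell$, while the matching of residues of a parabolic element to the cusp on each side produces the coprimality constraints $(r,a)=1$ and $(s,d)=1$ together with the condition $(r,s)\mid a-d$; this should reproduce $\Phi_{N,\ell}(a,d)$ and, after inserting it into \eqref{1.7} and scaling by $\ell^{-w/2}$, the second displayed line.

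The main obstacle is the local analysis at primes dividing $\ell$: one must show both that the Atkin--Lehner normalization turns the Hecke-type local factor of $B_{\G,\SS}$ into a bare $\mu(u)$, and that the parabolic classes fixing each cusp contribute exactly the $\ell$-part $\varphi(\ell)/\ell$ with the stated coprimality conditions. Wherever possible I would reuse the bookkeeping already carried out for $\G_0(N)$ in Theorem~\ref{T3}, since $\SS=\DD_n w_\ell$ differs from the pure Hecke coset only by the involution $w_\ell$, and then combine the three contributions and clear the global factor $\ell^{-w/2}$ to obtain the formula.
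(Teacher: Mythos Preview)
Your proposal is correct and follows essentially the same route as the paper: the paper takes $\SS=\DD_n\Theta_\ell=\DD_n w_\ell$, verifies Assumption~\ref{eq_star} (Lemma~\ref{L8}), computes $\cc_{N,\ell}(M)$ by factoring via the Chinese remainder theorem into the Oesterl\'e piece $\cc_{\ell',{\bf 1}}(M)$ at primes dividing $\ell'$ and the factor $\dd_{(\ell,G),1}=\sum_{u\mid(\ell,G)}\mu(u)$ at primes dividing $\ell$ (Lemma~\ref{L6}), and then computes $\Phi_{N,\ell}$ via the cusp-space parametrization of Lemma~\ref{p6.1} (Lemma~\ref{Ta1}). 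The only organizational difference is that the paper works directly with the explicit description \eqref{5.1} of $\DD_n\Theta_\ell$ rather than reasoning locally prime by prime, which makes the $\ell$-part of $\cc_{N,\ell}$ drop out in one line; your local viewpoint is equivalent but slightly less direct.
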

The terms for $t^2=4\ell n$ in the summation above are present only if 
$\ell=1$, $n$ is a square, and $(n,N)=1$, when they contribute 
$\frac{\vp_1(N)}{12}(k-1)n^{w/2}$. Note that $\Phi_{N,1}$ is the 
same as the function $\Phi_{N,\chi}$ in Theorem \ref{T3} for $\chi={\bf 1}$.

The function $C_{N}(u,t,n)$ is explicitly computed in Lemma~\ref{L4}, where we show that
  \be \label{1.23}  C_{N}(u,t,n)= |S_N(t,n)|\cdot C_N(u,t^2-4n),\ee
with $S_N(t,n)=\{\alpha\in (\Z/N\Z)^\times: \ \alpha^2-t\alpha +n\equiv 0 \pmod{N}\}$, 
and $C_N(u,D)$ an explicit multiplicative function in $(N,u)$. For example, when
$N$ is square-free, we have $C_N(u,D)=u$ independent of $D$.

We also obtain a trace formula for $\Gamma_1(N)$, by summing over $\chi$ the 
formula in Theorem~\ref{T3}. For $u|N$, $u^2|t^2-4n$ we let 
\[B_N(u,t,n)=\frac{1}{\vp(N)}\sum_\chi B_{N,\chi}(u,t,n)=
\begin{cases} \dfrac{\vp_1(N)}{\vp_1(N/u)} & \text{if } Nu|t-n-1 \\
                \phantom{xxx}0 & \text{otherwise}
             \end{cases}
\]
where the sum is over all the $\vp(N)$ characters modulo $N$. Let $D_N(u,t,n)$ denote its Moebius inverse defined as in~\eqref{16}.  
\begin{thm} \label{T5} Let $N\ge 1$, $k\ge 2$, $n \ge 1$, and set $\Gamma=\G_1(N)$.  
We have:
\[
\tr(T_n,S_k(\Gamma)+M_k(\Gamma))=\delta_{k,2}\sigma_{1,N}(n)-
\varphi(N)\cdot\sum_{\substack{t\in\Z \\ N|t-n-1  } }p_{k-2}(t,n)\cdot
\sum_{u|N} H\left(\tfrac{4n-t^2}{u^2}\right) D_N(u,t,n)\;,
\]
\comment{
\[\tr(T_n, S_k(\Gamma)+M_k(\Gamma))=\delta_{k,2}\sigma_{1,N}(n)-
\varphi(N)\cdot\sum_{
t\in\Z }p_{k-2}(t,n)\cdot
 \sum_{\substack{ u\\N u_N|(t-n-1)}} h_0\left(\tfrac{t^2-4n}{u^2}\right) 
 \psi_N(u_N),
\] 
where $u_N:=\gcd(N,u)$ and for $u|N$ we set $\psi_N(u)=\frac{\varphi_1(N)}{\varphi_1(N/u)}$.
The summation over $u$ contains only the term $u=N$ if $t^2-4n=0$. 
}
\[\begin{split}
\tr (T_n, S_k(\G))= \delta_{k,2}\sigma_{1,N}(n)-\frac {\varphi(N)}2 
\cdot\sum_{t^2\le 4n  }p_{k-2}(t,n)\cdot
\sum_{u|N} H\left(\tfrac{4n-t^2}{u^2}\right) D_N(u,t,n)\\
-\frac 14\sum_{n=ad}\min(a,d)^{k-1}\left(\Psi_N(a,d)+(-1)^k\Psi_N(-a,-d)\right)
\;,
 \end{split}
\]
where 
\[\Psi_N(a,d)=\sum_{\substack{N=rs \\ r|(a- 1), s|(d-1) } } 
\varphi((r,s))\varphi(N/(r,s)). \]
\end{thm}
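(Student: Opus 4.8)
The plan is to deduce both formulas of Theorem~\ref{T5} from the $\G_0(N)$ results by averaging over Dirichlet characters modulo $N$, using the decompositions $M_k(\G_1(N))=\bigoplus_\chi M_k(\G_0(N),\chi)$ and $S_k(\G_1(N))=\bigoplus_\chi S_k(\G_0(N),\chi)$, in which only the characters with $\chi(-1)=(-1)^k$ give nonzero summands. Thus $\tr(T_n,S_k(\G_1(N)))=\sum_{\chi(-1)=(-1)^k}\tr(T_n,S_k(N,\chi))$, and similarly for the combination $M_k+S_k$, so the two displays will come from Theorem~\ref{T2} (in the form~\eqref{1.1}) and from Theorem~\ref{T3}, respectively.

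For the combination $M_k+S_k$ I would sum~\eqref{1.1} over all $\varphi(N)$ characters. Orthogonality gives $\sum_\chi B_{N,\chi}(u,t,n)=\frac{\varphi_1(N)}{\varphi_1(N/u)}\sum_{x\in S_N(u,t,n)}\sum_\chi\chi(x)$, and since only $x=1$ contributes, this equals $\frac{\varphi_1(N)}{\varphi_1(N/u)}\varphi(N)$ exactly when $1\in S_N(u,t,n)$, i.e. when $Nu\mid t-n-1$, yielding the stated $B_N$; Moebius inversion in $u$ commutes with the character sum, so $\sum_\chi C_{N,\chi}=\varphi(N)D_N$. The point that lets me sum over all $\chi$ (not just the even ones) is that~\eqref{1.1} vanishes for wrong parity: there $B_{N,\chi}(u,-t,n)=\chi(-1)B_{N,\chi}(u,t,n)=(-1)^{k+1}B_{N,\chi}(u,t,n)$, so the $t$ and $-t$ summands cancel against $p_{k-2}(-t,n)=(-1)^kp_{k-2}(t,n)$. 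Hence $\sum_{\chi(-1)=(-1)^k}\mathrm{RHS}=\sum_{\text{all }\chi}\mathrm{RHS}$, and the $\delta$-term $\delta_{k,2}\sigma_{1,N}(n)$ comes solely from $\chi=\mathbf 1$, giving the first formula with the full range $t\in\Z$.

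For the cuspidal formula I would sum the three terms of Theorem~\ref{T3} over the even characters using the projector $\tfrac12(1+(-1)^k\chi(-1))$. In the class-number part the projector produces a $\tfrac{\varphi(N)}2 D_N(u,t,n)$ piece and a $\tfrac{(-1)^k\varphi(N)}2 D_N(u,-t,n)$ piece; after the substitution $t\to-t$ in the symmetric range $t^2\le 4n$ (with $H$ even in $t$) the second reproduces the first, giving the coefficient $\tfrac{\varphi(N)}2$ in the theorem. The $\delta$-term again gives $\delta_{k,2}\sigma_{1,N}(n)$ from $\chi=\mathbf 1$. For the Eisenstein part, the identity $\alpha_{r,s}^{-a,-d}=-\alpha_{r,s}^{a,d}$ forces $\Phi_{N,\chi}(-a,-d)=\chi(-1)\Phi_{N,\chi}(a,d)$, so the projector converts $\sum_{\chi(-1)=(-1)^k}\Phi_{N,\chi}(a,d)$ into $\tfrac12\bigl(\Psi_N(a,d)+(-1)^k\Psi_N(-a,-d)\bigr)$ with $\Psi_N(a,d):=\sum_{\text{all }\chi}\Phi_{N,\chi}(a,d)$, producing both the factor $\tfrac14$ and the symmetrized sum.

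It then remains to evaluate $\Psi_N(a,d)=\sum_\chi\Phi_{N,\chi}(a,d)$. Interchanging the sums over $\chi$ and over factorizations $N=rs$, the inner sum $\sum_\chi\chi(\alpha_{r,s}^{a,d})$ runs over the $\chi$ with $c(\chi)\mid N/(r,s)$ (the condition $(r,s)\mid N/c(\chi)$ under which $\chi$ is defined on the residue $\alpha_{r,s}^{a,d}$ modulo $N/(r,s)$); these are precisely the $\varphi(N/(r,s))$ characters lifted from $(\Z/(N/(r,s))\Z)^\times$, so orthogonality gives $\varphi(N/(r,s))$ when $\alpha_{r,s}^{a,d}\equiv 1\pmod{N/(r,s)}$ and $0$ otherwise. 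The congruence is equivalent to $r\mid a-1$ and $s\mid d-1$, which automatically forces $(r,s)\mid a-d$, yielding $\Psi_N(a,d)=\sum_{N=rs,\ r\mid a-1,\ s\mid d-1}\varphi((r,s))\varphi(N/(r,s))$ as claimed. I expect this last step — correctly handling the conductor condition $(r,s)\mid N/c(\chi)$ and recognizing it as the orthogonality relation for characters modulo $N/(r,s)$ — to be the main technical obstacle, with the parity bookkeeping in the Eisenstein term (the $(-1)^k\Psi_N(-a,-d)$ term) being the other place that requires care.
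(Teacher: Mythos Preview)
Your proposal is correct and follows exactly the approach the paper indicates: Theorem~\ref{T5} is stated immediately after the definition of $B_N$ and $D_N$ with the remark that it is obtained ``by summing over $\chi$ the formula in Theorem~\ref{T3}'', and no further proof is given. Your argument fills in precisely the details the paper omits --- the parity bookkeeping via the projector $\tfrac12(1+(-1)^k\chi(-1))$, the identity $\Phi_{N,\chi}(-a,-d)=\chi(-1)\Phi_{N,\chi}(a,d)$, and the orthogonality computation of $\Psi_N(a,d)$ via the conductor condition $c(\chi)\mid N/(r,s)$ --- and each step is correct.
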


The formulas in Theorems \ref{T3}, \ref{T4} and \ref{T5} were
verified numerically for a large range of parameters.

\subsection{Applications}\label{sec1.4}

An important application of the trace formula is that it can be used to
construct explicitly modular forms. Indeed the ``trace form''
\[\sum_{n \ge 1 } \tr(T_n, S_k(N,\chi)) e^{2\pi inz}, \]
is a nonzero modular form belonging to $S_k(N,\chi)$, and 
by acting on it with Hecke operators one can generate the entire space. 

We give two explicit examples for $N=4$. In this case it is easier to apply 
Theorem~\ref{T5}, since $\tr(T_n, S_k(\G_1(4))$ equals the trace on $S_k(4)$
for $k$ even, and the trace on $S_k(4,\chi_4)$ for $k$ odd, with~$\chi_4$ the 
nontrivial character modulo 4. The functions $D_N(u,t,n)$ in Theorem~\ref{T5} 
are straightforward to compute, and we obtain the following explicit formulas. 
More details are given in Section~\ref{sec4.3}.
\begin{cor} \label{C2} \emph{a)} Let $n\ge 1$ be odd. If $k\ge 2$ is even we have
    \[ \tr(T_n,S_k(4))=-3 \sum_{s^2\le n} p_{k-2}(2s,n) H(n-s^2) 
    -\frac 32 \sum_{n=ad} \min(a,d)^{k-1}+\delta_{k,2}\sigma_1(n)\;, \]
  while if $k\ge 3$ is odd we have
  \[\begin{split}
\tr(T_n,S_k(4,\chi_4))=-
  \sum_{\substack{s^2\le n\\ s\ \mathrm{odd}}} (-1)^{(2s-n-1)/4}p_{k-2}(2s,n) 
  \left(H(n-s^2)+2H\left(\frac{n-s^2}4\right)\right)\\
  -\sum_{\substack{n=ad\\4|a-d}} \min(a,d)^{k-1}\chi_4(a)\,.
\end{split}\]

\emph{b)} Let $n\ge 2$ be even. For $k\ge 2$, and $\chi$ the character mod 4 with 
$\chi(-1)=(-1)^k$ we have
\[\tr(T_n,S_k(4,\chi))= -\sum_{\substack{t^2\le 4n\\ 4|t-n-1}}
p_{k-2}(t,n) H(4n-t^2)-\sum_{\substack{n=ad\\ a\ \mathrm{odd}}}\chi(a)
\min(a,d)^{k-1}
+\delta_{k,2}\sum_{\substack{n=ad\\ a\ \mathrm{odd}}} d\,.
\]
\end{cor}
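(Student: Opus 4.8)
The plan is to specialize Theorem~\ref{T5} to the level $N=4$, splitting into the cases $n$ odd and $n$ even. The key observation is that for $\G_1(N)$ the trace depends only on whether the character's parity matches the weight: for $k$ even the relevant space is $S_k(4)=S_k(4,\mathbf 1)$, and for $k$ odd it is $S_k(4,\chi_4)$. So the main task reduces to computing the arithmetic functions $D_4(u,t,n)$ and $\Psi_4(a,d)$ that appear in Theorem~\ref{T5}, for $N=4$, and then assembling the three summands.

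First I would compute $B_4(u,t,n)$ from its definition: since $\vp_1(4)=4\cdot\frac32=6$, we have $B_4(1,t,n)=1$, $B_4(2,t,n)=\vp_1(4)/\vp_1(2)$ when $8\mid t-n-1$, and $B_4(4,t,n)=\vp_1(4)/\vp_1(1)$ when $16\mid t-n-1$; each is nonzero only subject to its congruence $Nu\mid t-n-1$, i.e. $4u\mid t-n-1$. Moebius inversion \eqref{16} then gives $D_4(u,t,n)$ for $u\in\{1,2,4\}$. The congruence $N\mid t-n-1$, i.e. $4\mid t-n-1$, from the outer sum fixes the parity of $t$ relative to $n$: for $n$ odd this forces $t$ even, so writing $t=2s$ collapses the sum to a sum over $s$ with the class-number combination $H(n-s^2)$ (and its $u=2$ companion $H((n-s^2)/4)$ entering through $D_4(2,\cdot)$), matching the stated forms; for $n$ even the congruence $4\mid t-n-1$ forces $t$ odd, so no such collapse occurs and the sum stays over all $t$ with $4\mid t-n-1$. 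The weight-dependent sign $(-1)^{(2s-n-1)/4}$ in the odd-$k$ case should emerge from evaluating $\chi_4(\alpha)$ at the root $\alpha$ of $\alpha^2-t\alpha+n\equiv 0\pmod{4u}$; I would track how $\chi_4$ of that root, times the parity factor from the $h_0$/$H$ bookkeeping, produces that power of $-1$.

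Next I would handle the parabolic (cusp) contribution, i.e. the term built from $\Psi_4(a,d)$. For $N=4$ the divisor factorizations $4=rs$ are $(r,s)\in\{(1,4),(2,2),(4,1)\}$, and $\Psi_4(a,d)=\sum_{r|a-1,\,s|d-1}\vp((r,s))\vp(4/(r,s))$ over these; evaluating each case and imposing the divisibility conditions $r\mid a-1$, $s\mid d-1$ should yield a clean expression supported on $a,d$ odd (for $n$ odd) or on one of $a,d$ odd (for $n$ even). Combining $\Psi_N(a,d)$ with $(-1)^k\Psi_N(-a,-d)$ as in Theorem~\ref{T5}, and dividing by $4$, I expect the cusp term to reduce to $\tfrac32\sum_{n=ad}\min(a,d)^{k-1}$ for $n$ odd and $k$ even, to the $\chi_4$-weighted sum over $4\mid a-d$ for $n$ odd and $k$ odd, and to the sum over $a$ odd for $n$ even—matching the three displayed cusp terms. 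Finally the Eisenstein correction $\delta_{k,2}\sigma_{1,4}(n)$ specializes directly, giving $\sigma_1(n)$ for $n$ odd (since $(4,d)=1$ is automatic) and $\sum_{n=ad,\,a\text{ odd}}d$ for $n$ even.

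The main obstacle will be the bookkeeping of signs and congruences in the odd-weight case~(b is even, but the odd-$k$ part of~a is the delicate one): correctly identifying the root $\alpha$ of the quadratic modulo $4u$, evaluating $\chi_4(\alpha)$, and reconciling the factor $(-1)^{(2s-n-1)/4}$ with the sign conventions in the definitions of $H$, $h_0$, and $D_N$. The parabolic term also requires care, since one must verify the symmetry used in \eqref{1.8}--\eqref{1.7} and correctly combine $\Psi_N(a,d)$ with $\Psi_N(-a,-d)$ so that the $(a,d)$ and $(-a,-d)$ contributions either reinforce or cancel according to the parity of $k$; getting the resulting support (all of $a,d$ odd versus the single condition $4\mid a-d$) to agree with the claimed formulas is where I expect the computation to be most error-prone. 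The class-number terms for $t^2=4n$ are handled by the already-cited Remark~\ref{r4.6}, so they need not be recomputed here.
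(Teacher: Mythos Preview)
Your approach—specialize Theorem~\ref{T5} to $N=4$, compute $D_4$ and $\Psi_4$, and assemble—is exactly the paper's, and the parabolic and Eisenstein pieces go through as you describe. Two points, however, need correction.

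First, for $k$ even and $n$ odd you will not get the clean term $3H(n-s^2)$ directly. With $t=2s$ the sum $\sum_{u\mid 4}H\bigl((4n-t^2)/u^2\bigr)D_4(u,t,n)$ involves $H\bigl(4(n-s^2)\bigr)$, $H(n-s^2)$ and $H\bigl((n-s^2)/4\bigr)$, and collapsing these into a single multiple of $H(n-s^2)$ requires the class number identities
\[
H(4D)=4H(D)\ (D\equiv 3\bmod 8),\quad H(4D)=2H(D)\ (D\equiv 7\bmod 8),\quad H(4D)=3H(D)-2H(D/4)\ (4\mid D),
\]
which the paper invokes explicitly. Without them the computation does not close, and your sketch does not mention them.

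Second, the sign $(-1)^{(2s-n-1)/4}$ in the odd-$k$ case cannot come from ``$\chi_4(\alpha)$ at the root of the quadratic'': Theorem~\ref{T5} has already summed over all characters, and the function $D_4(u,t,n)$ carries no character data—you are confusing $B_N$ of Theorem~\ref{T5} with $B_{N,\chi}$ of Theorem~\ref{T3}. The sign arises instead from writing the elliptic sum as $\sum'_{0\le t}\,p_{k-2}(t,n)\sum_{u\mid 4}H(\cdot)\bigl(D_4(u,t,n)+(-1)^kD_4(u,-t,n)\bigr)$ and, for $k$ odd, analysing the difference $D_4(u,2s,n)-D_4(u,-2s,n)$. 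Since $D_4(u,\pm 2s,n)$ is governed by the congruences $4u\mid 2s\mp(n+1)$, a short case split on $\nu_2(2s-n-1)\in\{2,3,\ge 4\}$ (this is what the paper does) produces both the sign and the combination $H(n-s^2)+2H\bigl((n-s^2)/4\bigr)$.
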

\begin{remark*}
Note that in both sums over $s$ in part a) we have $4|2s-n-1$ in order for 
$H(n-s^2)\ne 0$. Note that $\tr(T_n, S_k(4,\chi_4))=0$ for 
$n\equiv 3 \pmod 4$, as all terms in the formula vanish.
\end{remark*}
For $f=\sum_{n\ge 1} a_n q^n$ a modular form in  $S_k(4)$, respectively 
in $S_k(4,\chi_4)$  it is easy to see that both $\sum_{n\ \mathrm{odd}} a_n q^n$
and $\sum_{n\ \mathrm{even}} a_n q^n$
belong to $S_k(4)$,  respectively to $S_k(8,\chi_4)$. Applying this 
observation to the trace form, we obtain explicit cusp forms of the type 
given in the introduction. For $k$ even and odd index traces,  
this proves a conjecture of Cohen~\cite{Co}, which was recently proved 
by Mertens by different methods~\cite{Me1}. For $k$ odd, and for $k$ even 
and even index traces, the resulting formulas seem to be new.

Since the spaces $S_2(4)$, $S_3(4, \chi_4)$ are trivial, the formulas in the 
corollary reduce to class number relations similar to the Kronecker-Hurwitz 
formula. Other such relations can be obtained from Theorem~\ref{T4} taking 
small values for~$N$.

As a second application, we use Theorem~\ref{T5} to obtain the limit stated in 
the introduction for the trace of a fixed Hecke operator $T_n$ on $\G_1(N)$ when 
$N$ goes to infinity. In fact we obtain precise formulas for 
$N>2n+2$, which no longer contain class numbers: 
\begin{cor} \label{C1} Fix $k\ge 2$ and $n>1$. For $N>2n+2$ we have:
\[
\tr(T_n, S_k(\G_1(N))+M_k(\G_1(N)))=\delta_{k,2}\sigma_{1,N}(n)+
\frac{\varphi(N)}{2}\frac{n^{k-1}-1}{n-1}\sum_{u|n-1} \varphi(u)
\frac {\vp_1(N)}{\vp_1(N/(u,N))}
%\psi_N(u_N),
\]
\[\tr(T_n, S_k(\G_1(N)))=\delta_{k,2}\sigma_{1,N}(n)-\frac 12 
\sum_{u|(N,n-1)}\vp\big((u,N/u)\big)\vp\left( \frac{N}{(u,N/u)}\right).
\]
 \end{cor}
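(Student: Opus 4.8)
The plan is to derive both identities from the two formulas of Theorem~\ref{T5} by exploiting the hypothesis $N>2n+2$ to collapse every sum over $t$, over factorizations $n=ad$, and over divisors down to a single surviving term. For the combination $\tr(T_n,S_k(\G_1(N))+M_k(\G_1(N)))$, the starting observation is that $D_N(u,t,n)$ is supported on $t\equiv n+1\pmod N$ (already $B_N(1,t,n)\ne0$ forces $N\mid t-n-1$), so the outer sum runs only over such $t$. I would then check that $H\big(\tfrac{4n-t^2}{u^2}\big)$ can be nonzero for some $u\mid N$ only at $t=n+1$: the elliptic range $t^2<4n$ contains no representative of $n+1\bmod N$, since for $n>1$ both $n+1$ and $N-(n+1)$ exceed $2\sqrt n$; the parabolic case $t^2=4n$ is excluded because the relevant distances $(\sqrt n\pm1)^2\le(\sqrt n+1)^2<2n+2<N$ are nonzero mod $N$; and a hyperbolic $t$ with $t^2-4n$ a perfect square has $|t|=a+d$ for some factorization $n=ad$, so $2\sqrt n\le|t|\le n+1<N$, and since $N>2(n+1)$ the only representative of $n+1\bmod N$ in this range is $t=n+1$ itself (forcing $a=1$, $d=n$).

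Having isolated $t=n+1$, I would evaluate the surviving term. The Gegenbauer value is $p_{k-2}(n+1,n)=\frac{n^{k-1}-1}{n-1}$, read off from $1-(n+1)x+nx^2=(1-x)(1-nx)$ by partial fractions. Since $t^2-4n=(n-1)^2$, the admissible contents are the $u\mid(N,n-1)$. Here it is cleanest to work with the $h_0$-version of the formula (the analogue of~\eqref{1.2}): the content-$u$ hyperbolic classes contribute $-2h_0\big(((n-1)/u)^2\big)=\varphi((n-1)/u)$ and carry the weight $\tfrac{\varphi_1(N)}{\varphi_1(N/(u,N))}$, the congruence built into $B_N$ being automatic as $t-n-1=0$. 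This bypasses computing the Moebius inverse $D_N$ explicitly; summing over $u\mid n-1$ and reindexing the divisor sum then produces the closed form.

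For the cuspidal trace I would proceed in the same spirit. The identical size argument kills the entire elliptic/parabolic sum $\sum_{t^2\le 4n}$, since $D_N(u,t,n)\ne0$ again requires $N\mid t-n-1$, which is incompatible with $t^2\le 4n$. It then remains to evaluate the cusp sum $\sum_{n=ad}\min(a,d)^{k-1}\big(\Psi_N(a,d)+(-1)^k\Psi_N(-a,-d)\big)$. Every term $\Psi_N(-a,-d)$ vanishes, because a factorization $N=rs$ with $r\mid a+1$, $s\mid d+1$ would give $N=rs\le(a+1)(d+1)\le 2n+2<N$; and $\Psi_N(a,d)$ vanishes whenever $a,d\ge2$, since then $N=rs\le(a-1)(d-1)<n<N$. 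Hence only the factorizations with $\min(a,d)=1$ survive, namely $\{a,d\}=\{1,n\}$, where $\min(a,d)^{k-1}=1$; this is precisely why the answer is independent of $k$. Using the symmetry $\Psi_N(1,n)=\Psi_N(n,1)$ and reindexing the surviving divisor sum over $u\mid(N,n-1)$ yields the stated expression, and in turn the limit $\tr/\varphi(N)\to-\tfrac12$ in the coprime case.

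The main obstacle is making the collapse argument genuinely exhaustive: one must rule out, all against the single inequality $N>2n+2$, every elliptic, parabolic, and interior hyperbolic class for both signs of $t$, together with every nontrivial factorization $n=ad$ and every spurious divisor $r,s$ in $\Psi_N$. Once that combinatorial reduction is secured, the remaining work is bookkeeping with the Gegenbauer polynomial, the extended class numbers, and the multiplicative functions $B_N$, $D_N$, $\Psi_N$, where the $h_0$-form is the convenient vehicle for the first identity and direct substitution suffices for the second.
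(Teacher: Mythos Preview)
Your proposal is correct and follows the same approach as the paper: use $N>2n+2$ to collapse the $t$-sum in Theorem~\ref{T5} to the single term $t=n+1$ for the first formula and to the empty sum for the second, kill $\Psi_N(-a,-d)$ and the interior factorizations $a,d\ge 2$, and evaluate what remains. Your choice to route the first identity through the $h_0$-form~\eqref{1.2} rather than computing $D_N$ directly is a natural elaboration of the paper's terse proof, which simply asserts that ``the conclusion follows.''
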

Assuming $(N,n-1)=1$, the first formula gives:
\[\lim_{\substack{N\rar \infty\\ (N,n-1)=1}} 
\frac{\tr(T_n, S_k(\G_1(N))+M_k(\G_1(N)))}{\varphi(N)}=\frac{n^{k-1}-1}{2},
\]
while the second gives the limit formula from in the introduction.
\begin{proof}Since $N|t-n-1$ in the sums in Theorem~\ref{T5}, the  
assumption $N>2n+2$ implies that only $t=n+1$ contributes in the first formula and 
the sum over $t$ in the second formula is empty. We also have $\Phi_N(-a,-d)=0$ for $n=ad$, $a,d>0$,
and since $p_{k-2}(n+1,n)=(n^{k-1}-1)/(n-1)$ the conclusion follows. 
\end{proof}

\section{General trace formulas on the Eisenstein subspace and on the 
cuspidal subspace}\label{secEis}

In Section \ref{s4.1} we take $\G$ to be Fuchsian subgroup of the 
first kind with cusps and we compute the trace of a double coset operator~$[\SS]$ 
on the Eisenstein subspace~$E_k(\G,\chi)$. Here $\SS$ is any double coset 
contained in the commensurator of $\G$ inside $\GL_2^+(\R)$. As an immediate 
consequence, in Section~\ref{s4.2} we use an equivalent formulation
of Theorem~\ref{T2} that makes sense for a Fuchsian group (see ~\eqref{TFF})
to obtain the trace formula on the cuspidal subspace in Theorem~\ref{TSS}.  

The trace formulas on the Eisenstein and on the cuspidal subspaces depend on 
the arithmetic function $\Phi_{\G,\SS}^\chi(a,d)$, and in Section~\ref{sec4} 
we give a practical way to compute this function when~$\G$ is a finite index 
subgroup of~$\G_1$ and $\SS$ is a double coset satisfying the assumption in 
Theorem~\ref{T2}, which we restate here as follows. 
\begin{hypo}\label{eq_star} The map 
$$\G\backslash\SS\longrightarrow \G_1\backslash\G_1\SS,\quad   \G \sigma \mapsto
 \G_1\sigma  $$  
is bijective,  or equivalently $|\G\backslash\SS|= |\G_1\backslash\G_1\SS|$.
\end{hypo}

\subsection{A trace formula on the Eisenstein subspace}\label{s4.1}
We start by introducing some notation and terminology related to the cusps. For 
a parabolic or hyperbolic matrix $\ss\in \GL_2(\R)^+$, we denote by 
$\sgn(\ss)\in \{\pm 1\}$ the sign of the eigenvalues of $\ss$. For $\aa$ a cusp 
of $\G$, we let $\G_\aa\subset\G$ be the stabilizer of $\aa$ in $\G$. Thus 
$\G_\aa=\pm \la\g_\aa \ra$ if $-1\in \G$, and $\G_\aa=\la \g_\aa\ra $ if 
$-1\notin \G$, for a generator $\g_\aa\in \G_\aa$, with $\sgn(\g_\aa)=+1$ if $-1 
\in \G$. Let $C_\aa\in \SL_2(\R)$ be a scaling matrix for the cusp $\aa$, namely 
$C_\aa\aa =\infty$, and \be \label{5.0} C_\aa \g_\aa C_\aa^{-1}= \sgn(\g_\aa) 
\sm 1101 \,. 
\ee 
We assume that scaling matrices for equivalent cusps satisfy $C_{\g\aa}=C_\aa\g^{-1}$. 

For $\ss\in\widetilde{\G}$ (the commensurator of $\G$), there exists $n\in\Z$ such that $\g_\aa^n\in \ss^{-1}\G\ss $, and since 
$\ss \g_\aa^n \ss^{-1}\in \G$ is a parabolic element fixing $\ss\aa$, we have that $\bb=\ss\aa$ is also a cusp of $\G$, 
and 
  \be\label{5.2} \ss \g_\aa^n \ss^{-1}=\pm \g_\bb^m \;, \ee
for some $m\in \Z$. 
As $C_\bb \ss C_\aa^{-1}\infty=\infty$ we have 
  \be \label{5.31} C_\bb \ss C_\aa^{-1}=\pmat {a_\aa(\ss)}b0{d_\aa(\ss)}\;,\ee
for some $a_\aa(\ss),d_\aa(\ss)\in \R$, and therefore 
 $C_\bb\ss \g_\aa\ss^{-1} C_{\bb}^{-1}=\pm \sm 1{a_\aa(\ss)/d_\aa(\ss)}01.$
Raising the previous relation to the power $2n$ and using \eqref{5.2} we obtain that the ratio 
 \be \label{5.4} \frac {a_\aa(\ss)}{d_\aa(\ss)} =\frac mn  \ee 
is a positive rational number. 
\begin{remark}\label{r2}
For fixed $\aa$, the constants $a_\aa(\ss)$, $d_\aa(\ss)$ only depend 
on the coset $\G\ss$ since $C_{\g\bb}=C_\bb\g^{-1}$. Moreover $a_\aa(\ss)$, $d_\aa(\ss)$ are invariant under
the map $(\aa, \ss)\mapsto (\g \aa, \ss\g^{-1})$, for~$\g\in\G$. It follows that by scaling the double coset 
$\SS$ we can assume that $a_\aa(\ss), d_\aa(\ss)\in \Z$ for all cusps $\aa$ and $\ss\in\SS$.
\end{remark}

\subsubsection{Constant terms of Eisenstein series}\label{s4.1.1}
For an Eisenstein series $E\in E_k(\G,\chi)$, the constant term $A_E(\aa)$ of $E$ at the cusp $\aa$ is defined as the constant term 
of the Fourier expansion of $E|_k C_\aa^{-1}$:
 \[ A_E(\aa)=a_0(E|_k C_\aa^{-1})=\lim_{z\rightarrow i\infty} E|_k C_\aa^{-1} (z) \;. \]
From \eqref{5.0} we have $A_E(\aa)=a_0(E|_k C_\aa^{-1}T^{-1})=\chi(\g_\aa)^{-1}\sgn(\g_\aa)^k a_0(E|_{k} C_\aa^{-1}) $,
so $A_E(\aa)$ vanishes unless $\chi(\g_\aa)=\sgn(\g_\aa)^k$. Therefore we define the  $\G$-invariant set
  \[ \cps(\G,\chi)=\{\aa\in \cps(\G)\;:\; \chi(\g)=\sgn(\g)^k \text{ for } \g\in\G_\aa    \}, \]
and we let $C(\G,\chi)\subset C(\G) $ be sets of representatives 
for $\G$-equivalence classes in $\cps(\G,\chi)$, respectively in $\cps(\G)$. 
When $\chi={\bf 1}$, the trivial character, we have $C(\G,{\bf 1})= C(\G)$ if 
$k$ is even, while $C(\G,{\bf 1})$ is the set of regular cusps if $k$ is odd and $-1\notin\G$. 

Since  $C_\aa \g^{-1}$ is a scaling matrix for $\g\aa $ for $\g\in\G$, it follows that 
$A_E({\g\aa})=\chi(\g)A_E({\aa})$. Identifying the vector space $\C^{|C(\G,\chi)|}$ with the space of maps
$f:C(\G,\chi)\rightarrow \C$, we have an injective map  
  \be\label{5.9} E_k(\G,\chi)\longrightarrow \C^{|C(\G,\chi)|},\quad E\mapsto A_E\;. \ee
This map is a bijection, unless $k=2$ and $\chi=\bf 1$, when $C(\G,{\bf 1})=C(\G)$ and we have an exact sequence
  \be\label{5.10}  0\xrightarrow{\quad \; \quad }E_2(\G)\xrightarrow{\;\;\; E \mapsto A_E\;\;\;} \C^{|C(\G)|} 
  \xrightarrow{f\mapsto\sum_\aa f(\aa)} \C \xrightarrow{\quad \; \quad } 0\; .\ee

We can now compute the constant terms of $E|[\SS]$, for $\SS\subset \widetilde{\G}$ a double coset. 
For a cusp $\aa\in \cps(\G,\chi)$ and $E\in E_k(\G,\chi)$ we have by \eqref{hecke}
  \[ %\label{5.3}
  E|[\SS]|_k C_\aa^{-1}= \sum_{\ss \in \G\backslash\SS} \det\ss^{k-1} \wc(\ss) E|_k\ss C_a^{-1}\;.
  \]
Replacing $\ss C_\aa^{-1}$ from \eqref{5.31} in the previous relation, and taking $z\rightarrow i \infty$ we obtain
  \be \label{5.5} a_0(E|[\SS]|_k C_\aa^{-1})=\sum_{\ss \in\G\backslash \SS} 
  a_0(E|_{k}C_{\ss\aa}^{-1} )\; \frac{a_\aa(\ss)^{k-1}}{d_\aa(\ss)}\; \wc(\ss) \;.\ee

\subsubsection{The trace formula}
Using~\eqref{5.5}, we compute $\tr([\SS], E_k(\G,\chi))$ in the next theorem. First we
prove a lemma interesting in its own right, which is needed for the case $k=2$, $\chi={\bf 1}$, and whose
proof will be used in the proof of the theorem. We make the following assumption 
on the double coset $\SS\subset \wc$, which is implied but much weaker than assumption~\ref{eq_star}.
\begin{assumpt}\label{ass4}
   If $-1 \notin \G$, then $\SS\cap -\SS=\emptyset$. 
\end{assumpt}

\begin{lemma}\label{L5.1}
Let $\G$ be a Fuchsian group of the first kind, and let $\SS\subset \widetilde{\G}$ 
be a double coset satisfying Assumption~\ref{ass4}. Then 
  \[ \sum_{\ss\in \G\backslash \SS} \frac{a_\aa(\ss)}{d_\aa(\ss)}= |\G\backslash\SS|, \]
independent of the cusp $\aa$ of $\G$, where $C_{\ss\aa} \ss C_\aa^{-1}=\pmat {a_\aa(\ss)}*0{d_\aa(\ss)}$.
\end{lemma}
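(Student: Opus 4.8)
The plan is to reduce the weighted sum to a sum of ``ramification indices'' attached to the cusps lying over $\aa$ in the Hecke correspondence attached to $\SS$, and then to prove the resulting combinatorial identity by comparing the number of left and of right cosets of $\SS$, which are equal because $\SS$ is a double coset. First I would rewrite the summand using~\eqref{5.4}: for $\ss\in\SS$ with $\bb=\ss\aa$ one has $a_\aa(\ss)/d_\aa(\ss)=m/n$, where $n$ is the least positive integer with $\ss\g_\aa^{\,n}\ss^{-1}\in\G$ and $\ss\g_\aa^{\,n}\ss^{-1}=\pm\g_\bb^{\,m}$. Let $\G_\aa$ act on the right of $\G\bsh\SS$, which is legitimate since $\SS\G=\SS$. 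On each orbit $O=\G\ss\G_\aa$ both $m=m_O$ and $n=n_O$ are constant; the orbit has exactly $n_O$ elements, its stabiliser in $\G_\aa$ being $\G_\aa\cap\ss^{-1}\G\ss=\langle\g_\aa^{\,n_O}\rangle$; and the summand is constantly $m_O/n_O$ on $O$. Hence each orbit contributes $n_O\cdot(m_O/n_O)=m_O$, giving
\[\sum_{\ss\in\G\bsh\SS}\frac{a_\aa(\ss)}{d_\aa(\ss)}=\sum_{O\in\G\bsh\SS/\G_\aa}m_O,\qquad\text{while}\qquad\sum_{O}n_O=|\G\bsh\SS|.\]

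Next I would identify the indices group-theoretically: with $\bb=\ss\aa$ one has $n_O=[\G_\aa:\G_\aa\cap\ss^{-1}\G\ss]$ and $m_O=[\G_\bb:\ss\G_\aa\ss^{-1}\cap\G]$, where $\ss\G_\aa\ss^{-1}\cap\G=\G_\bb\cap\ss\G\ss^{-1}=\langle\g_\bb^{\,m_O}\rangle$. Equivalently, inside $\SS_{\aa\to\bb}=\{\ss\in\SS:\ss\aa=\bb\}$, which carries commuting left $\G_\bb$- and right $\G_\aa$-actions, the double coset $\G_\bb\ss\G_\aa$ splits into $n_O$ left $\G_\bb$-cosets and $m_O$ right $\G_\aa$-cosets. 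Grouping the orbits $O$ by their target cusp $\bb\in C(\G)$, and using $\G\bsh\SS\cong\bigsqcup_{\bb}\G_\bb\bsh\SS_{\aa\to\bb}$, I obtain
\[\sum_{O}n_O=\sum_{\bb\in C(\G)}|\G_\bb\bsh\SS_{\aa\to\bb}|=|\G\bsh\SS|,\qquad\sum_{O}m_O=\sum_{\bb\in C(\G)}|\SS_{\aa\to\bb}/\G_\aa|,\]
so the lemma becomes the identity $\sum_{\bb}|\SS_{\aa\to\bb}/\G_\aa|=\sum_{\bb}|\G_\bb\bsh\SS_{\aa\to\bb}|$.

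The crux, and the main obstacle, is this last global balance: cusp by cusp the two counts generally differ (already in elementary examples the pairs $(m_O,n_O)$ get interchanged), and only the totals coincide. I plan to prove the total identity via the inversion $\ss\mapsto\ss^{-1}$, which carries $\SS$ to the transpose double coset $\SS^{-1}$, interchanges left $\G_\bb$-cosets of $\SS_{\aa\to\bb}$ with right $\G_\bb$-cosets of $(\SS^{-1})_{\bb\to\aa}$ and right $\G_\aa$-cosets with left $\G_\aa$-cosets (so that $a_\bb(\ss^{-1})/d_\bb(\ss^{-1})=d_\aa(\ss)/a_\aa(\ss)$); together with $|\G\bsh\SS|=|\SS/\G|=|\G\bsh\SS^{-1}|$, valid for any double coset, this exchanges the two sums and closes the argument. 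The delicate point is the cusp bookkeeping — matching the fibres over $\aa$ of the two projections of the correspondence, equivalently checking that inversion yields the required bijection while respecting the fixed representatives $C(\G)$; the preceding reduction to $\sum_{O}m_O$ is routine. Throughout, Assumption~\ref{ass4} guarantees that the signs in~\eqref{5.2} cannot make distinct cosets collapse when $-1\notin\G$, so that all the orbit-size and index computations (taken uniformly modulo $\pm1$ when $-1\in\G$) are clean.
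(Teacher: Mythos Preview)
Your reduction to $\sum_O m_O=\sum_O n_O$, i.e.\ to $\sum_{\bb\in C(\G)}|\SS_{\aa\bb}/\G_\aa|=|\G\backslash\SS|$, is correct and is exactly what the paper obtains from~\eqref{5.15}; the paper then invokes the second decomposition in~\eqref{5.14} at this point. Your inversion idea, however, does not close this step: inversion gives $|\SS_{\aa\bb}/\G_\aa|=|\G_\aa\backslash(\SS^{-1})_{\bb\to\aa}|$, but summing over the \emph{source} $\bb$ with the target $\aa$ fixed does not yield $|\G\backslash\SS^{-1}|$; by the first decomposition the latter equals a sum over the \emph{target} with a fixed source. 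So the two sums you hope to exchange are indexed differently, and $|\G\backslash\SS|=|\SS/\G|$ does not bridge them.

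In fact the identity --- and hence the lemma as stated --- is false in general, so no argument can close it. Take $\G=\G_0(2)$, $\SS=\DD_2$, $\aa=\infty$: the two left-coset representatives $\sm 1002,\sm 1102$ both have $a_\infty(\ss)/d_\infty(\ss)=1/2$, so the sum equals $1\ne 2=|\G\backslash\SS|$; in your language there is a single orbit $O$ with $(n_O,m_O)=(2,1)$. The paper's own proof has the same gap: the second decomposition in~\eqref{5.14} fails here, since the right coset $\sm 1022\,\G_0(2)$ meets neither $\SS_{\infty\infty}$ nor $\SS_{\infty 0}$. (The lemma does hold when $(\det\SS,N)=1$, which may be why the issue is easy to miss.) What Theorem~\ref{T5.1}(a) actually needs is not that $P_0$ be an eigenvector of $[\SS]$ on $\C^{|C(\G)|}$ (constant row sums, the content of Lemma~\ref{L5.1}) but that the induced action on the quotient $\C$ in~\eqref{5.10} be scalar (constant \emph{column} sums), which is a different statement and does hold in the example above.
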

\begin{proof}
For each $\bb\in C(\G)$, let $\SS_{\aa\bb}:=\{\ss\in \SS \;:\; \ss\aa=\bb \}$. Each coset $\G\ss$ contains 
a representative $\ss_0$ with $\ss_0\aa=\bb$, where $\bb$ is the fixed representative in $C(\G)$ of the 
equivalence class of cusps $\G\ss\aa$, and if $\g\ss_0$ is another such representative we have $\g\in\G_\bb$. A similar
reasoning applies to right cosets, so we have the disjoint decompositions (with a slight abuse of notation) 
  \be\label{5.14}\G\backslash\SS=\bigcup_{\bb\in C(\G)} \G_\bb \backslash \SS_{\aa\bb}, \quad
  \SS/\G=\bigcup_{\bb\in C(\G)} \SS_{\aa\bb}/\G_\aa.
  \ee
For $a,d>0$, let 
  \be\label{5.20}
  \SS_{\aa\bb}(a,d)=\left\{\ss\in\SS_{\aa \bb} \;:\; C_\bb \ss C_\aa^{-1} =\sgn(\ss) \pmat a*0d \right\}
  = \SS_{\aa\bb}^+(a,d)\cup\SS_{\aa\bb}^-(a,d) \;,\ee
where $\SS_{\aa\bb}^\pm(a,d)$ consist of those $\ss\in \SS_{\aa\bb}(a,d)$ having $\sgn(\ss)=\pm 1$. 
The first decomposition gives
  \be \label{5.11}\sum_{\ss\in \G\backslash \SS} \frac{a_\aa(\ss)}{d_\aa(\ss)}=
  \sum_{a,d>0} a \sum_{\bb\in C(\G)}\frac 1d \cdot |\G_\bb\backslash\SS_{\aa\bb}(a,d)| \;.
  \ee
The set $\SS_{\aa\bb}(a,d)$ is left invariant by~$\G_\bb$ and right invariant by~$\G_\aa$, 
and we show that 
 \be\label{5.15} \frac 1d\cdot  | \G_\bb\backslash\SS_{\aa\bb}(a,d)|=
 \frac 1{(a,d)} \cdot | \G_\bb\backslash\SS_{\aa\bb}(a,d)/\G_\aa|=
 \frac 1a \cdot |\SS_{\aa\bb}(a,d)/\G_\aa|\;.  \ee
To prove this identity, we assume by Remark~\ref{r2} that $a,d\in\Z$. We also 
assume  for simplicity that $-1 \in \G$, the other case being similar 
(using Assumption~\ref{ass4}). We have 
$\G_\bb\backslash\SS_{\aa\bb}(a,d)= \la \g_\bb\ra \backslash \SS_{\aa\bb}^+(a,d)$, and multiplying 
$\ss_b=C_{\bb}^{-1}\sm ab0d C_\aa\in \SS_{\aa\bb}^+(a,d)$ on the left by $\g_\bb^n$ and on the right by $\g_\aa^m$ 
changes $b\mapsto b+ma+nd$. Therefore a set of representatives for 
$\la \g_\bb\ra \backslash \SS_{\aa\bb}^+(a,d)/\la \g_\aa\ra$ is 
  \[\{\ss_b\in \SS_{\aa\bb}^+(a,d) \;:\; 0\le b<(a,d)\}\;, \]
while a set of representatives for $\la \g_\bb\ra \backslash \SS_{\aa\bb}^+(a,d)$, respectively 
$ \SS_{\aa\bb}^+(a,d)/\la \g_\aa\ra$, is the same set, with the range for $b$ 
replaced by $0\le b<d$, respectively $0\le b<a$, proving~\eqref{5.15}. 

Using~\eqref{5.15}, formula \eqref{5.11} becomes 
 \[\sum_{\ss\in \G\backslash \SS} \frac{a_\aa(\ss)}{d_\aa(\ss)}=
 \sum_{a,d>0} a \sum_{\bb\in C(\G)} \frac 1a \cdot |\SS_{\aa\bb}(a,d)/\G_\aa|=|\SS/\G|\;,
\]
by the second decomposition in \eqref{5.14}, and  the claim follows from the equality $|\SS/\G|=|\G\backslash \SS|$. 
\end{proof}
We now introduce the cuspidal sum entering the trace formula on the Eisenstein 
subspace. Denote by $\SS_\aa(a,d)$ the set $\SS_{\aa\aa}(a,d)$ introduced in~\eqref{5.11}
and let~$\SS_\aa=\SS_{\aa\aa}$ be the stabilizer of the cusp $\aa$ in $\SS$. 
We define the arithmetic function $\Phi_{\G,\SS}^\chi(a,d)$ as in~\eqref{1.7} the introduction.
   
\begin{theorem} \label{T5.1} Let $\G$ be a Fuchsian group of the first kind, 
$k\ge 2$, and $\chi$ a character of $\G$ with $\chi(-1)=(-1)^k$ if $-1\in\G$. 
Let $\SS\subset \wg$ be a double coset satisfying Assumption~\ref{ass4}. 

\emph{(a)} With the function $\Phi_{\G,\SS}^\chi(a,d)$ defined above, we have
 \be\label{5.8} 
   \tr([\SS], E_k(\G,\chi)) 
         = \sum_{a,d>0} a^{k-1}\Phi_{\G,\SS}^\chi(a,d)-
         \dd_{k,2}\dd_{\chi,{\bf 1}} \sum_{\ss\in\G\backslash\SS} \wc(\ss) \;. 
  \ee
  
\emph{ (b)} The function $\Phi_{\G,\SS}^\chi(a,d)$ is symmetric in $a,d$ and for $a\ne d$ we have
 \be\label{5.7} \Phi_{\G,\SS}^\chi(a,d)=\frac{1}{|d-a|} \sum_{\ss\in H_{\G,\SS}(a,d)}\sgn(\ss)^k\wc(\ss)\ee
where $H_{\G,\SS}(a,d)\subset \SS$ is a system of representatives for the hyperbolic $\G$-conjugacy classes 
$\X\subset \ov{\SS}$ whose elements fix two cusps of $\G$, and that have eigenvalues $a,d$ or $-a, -d$. 
  \end{theorem}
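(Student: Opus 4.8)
The plan is to compute the trace of $[\SS]$ on $E_k(\G,\chi)$ by using the coordinates on $E_k(\G,\chi)$ provided by the constant-term map $E\mapsto A_E$ of \eqref{5.9}. Since $A_{\g\aa} = \chi(\g)A_\aa$, the Eisenstein series $E$ is determined by its constant terms at the representatives $\aa\in C(\G,\chi)$, so the trace is the sum of the diagonal entries of the matrix of $[\SS]$ in these coordinates. From \eqref{5.5} the constant term of $E|[\SS]$ at $\aa$ is a sum over $\ss\in\G\backslash\SS$ of $a_0(E|_k C_{\ss\aa}^{-1})\cdot a_\aa(\ss)^{k-1}d_\aa(\ss)^{-1}\wc(\ss)$. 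The diagonal contribution to the trace comes precisely from those $\ss$ with $\ss\aa$ equivalent to $\aa$, i.e. $\ss$ lying in $\SS_{\aa\bb}$ for $\bb$ the representative of $\G\aa$; after using $A_{\g\aa}=\chi(\g)A_\aa$ to track the character factor, one reorganizes the diagonal terms by grouping $\ss$ according to the values $a=a_\aa(\ss)$, $d=d_\aa(\ss)$. First I would verify that collecting these into double-coset sums $\G_\aa\backslash\SS_\aa(a,d)/\G_\aa$ (and applying the identity \eqref{5.15} from Lemma~\ref{L5.1} to pass from $1/d$ to $1/(a,d)$) yields exactly $\sum_{a,d>0}a^{k-1}\Phi_{\G,\SS}^\chi(a,d)$, which is the main term of \eqref{5.8}.

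The correction term $-\dd_{k,2}\dd_{\chi,{\bf 1}}\sum_{\ss}\wc(\ss)$ arises solely from the exceptional case $k=2$, $\chi={\bf 1}$, where the constant-term map is not an isomorphism but fits into the exact sequence \eqref{5.10}. Here $E_2(\G)$ is a codimension-one subspace of $\C^{|C(\G)|}$, namely the kernel of $f\mapsto\sum_\aa f(\aa)$. To compute the trace on this subspace I would compute the trace of the induced operator on all of $\C^{|C(\G)|}$ and subtract the eigenvalue on the one-dimensional quotient $\C$. The operator on $\C^{|C(\G)|}$ has trace $\sum_{a,d>0}a^{k-1}\Phi_{\G,\SS}^{\bf 1}(a,d)$ by the same computation as above, and the action on the quotient is multiplication by a scalar which, by the proof of Lemma~\ref{L5.1}, equals $\sum_{\ss\in\G\backslash\SS}\wc(\ss)$; this is exactly where the hypothesis $|\SS/\G|=|\G\backslash\SS|$ and Assumption~\ref{ass4} enter, ensuring the sum $\sum_\ss a_\aa(\ss)/d_\aa(\ss)$ is the well-defined cusp-independent quantity $|\G\backslash\SS|$ needed to identify the scalar. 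Subtracting gives the stated correction, establishing part (a).

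For part (b), the symmetry of $\Phi_{\G,\SS}^\chi$ in $a,d$ follows directly from the symmetric middle expression $\frac{1}{(a,d)}|\G_\aa\backslash\SS_\aa(a,d)/\G_\aa|$ in \eqref{5.15}, once one checks that $\wc$ and $\sgn(\ss)^k$ are invariant on these double cosets (using $\wc|_\G=\chi^{-1}$ and $\chi(-1)=(-1)^k$). To obtain formula \eqref{5.7} I would reinterpret the double-coset sum over $\SS_\aa(a,d)/\G_\aa$ for $a\ne d$ in terms of $\G$-conjugacy classes: an element $\ss$ fixing the cusp $\aa$ with $C_\aa\ss C_\aa^{-1}=\sgn(\ss)\sm a * 0 d$ is hyperbolic with eigenvalues $\pm a,\pm d$ fixing $\aa$ and a second cusp, and conjugation by $\G_\aa$ on such $\ss$ corresponds to the counting implicit in $H_{\G,\SS}(a,d)$. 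The normalizing factor changes from $1/(a,d)$ to $1/|d-a|$ because the size of the stabilizer of a hyperbolic conjugacy class, together with the interval length $0\le b<(a,d)$ for the off-diagonal entry, combines against the eigenvalue spacing $|d-a|$; making this bookkeeping precise is the main obstacle. The hardest part will be matching the off-diagonal parametrization of $\SS_\aa(a,d)/\G_\aa$ to a clean set of representatives for hyperbolic conjugacy classes fixing two cusps, and correctly accounting for the two sign possibilities $(a,d)$ versus $(-a,-d)$ so that the single sum over $H_{\G,\SS}(a,d)$ with the factor $\sgn(\ss)^k$ reproduces both.
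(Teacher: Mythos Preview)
Your approach to part (a) matches the paper's proof essentially step for step: compute the trace of the induced operator on $\C^{|C(\G,\chi)|}$ via \eqref{5.5}, use \eqref{5.15} to pass from the $1/d$-weighted sum to the $1/(a,d)$-weighted double-coset sum defining $\Phi_{\G,\SS}^\chi$, and in the exceptional case $k=2$, $\chi={\bf 1}$ subtract the eigenvalue on the one-dimensional quotient, identified via Lemma~\ref{L5.1}.

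In part (b), however, your argument for the symmetry is incorrect. You claim symmetry ``follows directly from the symmetric middle expression $\frac{1}{(a,d)}|\G_\aa\backslash\SS_\aa(a,d)/\G_\aa|$ in \eqref{5.15}'', but this expression is \emph{not} symmetric in $a,d$: by definition $\SS_\aa(a,d)$ consists of those $\ss$ with $C_\aa\ss C_\aa^{-1}=\sgn(\ss)\sm a*0d$, so swapping $a$ and $d$ gives the genuinely different set $\SS_\aa(d,a)$, and there is no obvious bijection between the two (the second fixed cusp of $\ss\in\SS_\aa(a,d)$ need not even be $\G$-equivalent to $\aa$). Only the factor $(a,d)$ is symmetric, not the set being counted. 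The paper proceeds in the opposite logical order: it first proves the identity \eqref{5.7}, whose right-hand side \emph{is} manifestly symmetric because $H_{\G,\SS}(a,d)$ is defined by the unordered eigenvalue set $\{a,d\}$ (or $\{-a,-d\}$), and then deduces the symmetry of $\Phi_{\G,\SS}^\chi$ from it.

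Your sketch for \eqref{5.7} itself is on the right track but misses a subtlety the paper handles explicitly. Starting from the conjugacy-class side, one organizes $H_{\G,\SS}(a,d)$ according to which cusp $\aa\in C(\G)$ satisfies $C_\aa\ss C_\aa^{-1}=\sgn(\ss)\sm a*0d$ (exactly one of the two fixed cusps has this property), obtaining a sum over \emph{all} of $C(\G)$ of orbits for the \emph{conjugation} action $\G_\aa\sslash\SS_\aa'(a,d)$; one must then show that the contribution from cusps $\aa\notin C(\G,\chi)$ vanishes (by averaging over left translates by $\G_\aa$), and finally prove the counting identity $\frac{1}{|d-a|}|\G_\aa\sslash\SS_\aa'(a,d)|=\frac{1}{(a,d)}|\G_\aa\backslash\SS_\aa(a,d)/\G_\aa|$, which is the analogue of \eqref{5.15} for the conjugation action rather than the two-sided translation action.
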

\begin{proof} (a) By \eqref{5.5}, the action of $[\SS]$ on Eisenstein series corresponds to an action 
on $P\in \C^{|C(\G,\chi)|}$ given by 
  \be \label{5.13}P|[\SS] (\aa)=\sum_{\ss \in\G\backslash \SS} 
  P (\ss\aa) \; \frac{a_\aa(\ss)^{k-1}}{d_\aa(\ss)}\; \wc(\ss), \quad \aa\in \C^{|C(\G,\chi)|} \;,\ee
and we conclude
 \be \label{5.16}\tr([\SS], \C^{|C(\G,\chi)|})=\sum_{\aa\in C(\G,\chi)}
       \sum_{\ss\in\G_\aa\backslash \SS_\aa} \frac{a_\aa(\ss)^{k-1}}{d_\aa(\ss)}\wc(\ss)\;. \ee
If $k=2$ and $\chi={\bf 1}$, let $P_0\in \C^{|C(\G)|}$ such that 
$P_0(\aa)=1$ for all $\aa\in C(\G)$. We have that 
 \[ P_0|[\SS]= P_0 \cdot \!\! \sum_{\ss\in\G\backslash\SS}\!\! \wc(\ss)\; ;\]  
 indeed, we can assume without loss of generality that $\SS=\G\ss_0\G$ is a primitive double coset, so $\wc$ is constant
 on $\SS$ and by \eqref{5.13} we obtain that $P_0|[\SS](\aa)$ is given by the left hand side of the identity in Lemma \ref{L5.1}, multiplied
by $\wc(\ss_0)$. The exact sequence \eqref{5.10} then gives  
 \[\tr([\SS], E_k(\G,\chi))=\tr([\SS], \C^{|C(\G,\chi)|}) -\dd_{k,2}\dd_{\chi,{\bf 1}} \sum_{\ss\in\G\backslash\SS}\!\! \wc(\ss) \;. \]

We rewrite \eqref{5.16} as $\tr([\SS], \C^{|C(\G,\chi)|})=\sum_{a,d>0} a^{k-1}\Psi_{\G,\SS}^\chi(a,d)$, with 
\be \label{5.12}
\Psi_{\G,\SS}^\chi(a,d)=\frac{1}{d} \sum_{\aa\in C(\G,\chi)}
  \sum_{ \ss\in\G_\aa\backslash \SS_\aa(a,d)}\sgn(\ss)^k \wc(\ss)\;.
\ee
Since $\sgn(\ss)^k \wc(\ss)$ is invariant under $\ss\mapsto \g\ss$ and $\ss\mapsto \ss\g$, 
for $\g\in\G_\aa$ and $\aa\in C(\G,\chi)$, and $\SS_\aa(a,d)$ is also left and right invariant 
under multiplication by $\G_\aa$, by the first equality in~\eqref{5.15} it follows that 
$\Psi_{\G,\SS}^\chi=\Phi_{\G,\SS}^\chi$, proving~\eqref{5.8}.  

(b) That $\Phi_{\G,\SS}^\chi$ is symmetric follows from~\eqref{5.7}, 
since the right hand side is obviously symmetric in $a,d$. To prove~\eqref{5.7},
denote by $\Theta_{\G,\SS}^\chi(a,d)$ its right hand side. 
Each $\ss\in H_{\G,\SS}(a,d)$ fixes two cusps of $\G$, and exactly one of them, 
denoted $\aa$, satisfies $C_\aa \ss C_\aa^{-1}=\sgn(\ss)\sm ab0d$, 
independent of the scaling matrix $C_\aa$ used (for the other cusp, 
$a,d$ are reversed--see~\cite[p. 266]{Sh1}). By replacing $\ss$ by a conjugate,
we can assume that $\aa$ belongs to the set 
of cusp representatives $C(\G)$ fixed in~\S\ref{s4.1.1}, and  
$\pm\ss\in\SS_\aa (a,d)$, with the minus sign possible if and only 
if $-1\in \G$ by Assumption~\ref{ass4}. Denoting by 
$\SS_\aa'(a,d)$ either $\SS_\aa^+ (a,d)$ (defined in \eqref{5.20}) if $-1\in \G$, 
or $\SS_\aa (a,d)$ if $-1\not\in\G$, we have by Lemma~\ref{L4.4} that 
$\SS_\aa'(a,d)$ consists of hyperbolic elements fixing two cusps of $\G$, 
and we obtain
\[
\Theta_{\G,\SS}^\chi(a,d)=\frac{1}{|d-a|}\sum_{\aa\in C(\G)}\sum_{\ss\in \G_\aa \sslash \SS_\aa'(a,d)}\sgn(\ss)^k\wc(\ss)\;,
\]
where $\sslash$ denotes the conjugation action of $\G_\aa$ on  $\SS_\aa'(a,d)$, 
and the sum is over any system of representatives for the orbits of this action. 
The set $\SS_\aa'(a,d)$ is invariant under left and right multiplication
by the group $\la\g_\aa\ra $ generated by $\g_\aa$. Changing variables 
$\ss\mapsto \g \ss$ for $\g\in \la\g_\aa\ra$ in the sum over $\ss$, 
scales the sum by $\sgn(\g)^k\chi(\g)$. Therefore the inner sum 
vanishes, unless $\chi(\g)=\sgn(\g)^k$ for $\g\in \G_\aa$, that is 
unless $\aa\in C(\G,\chi)$. To show that $\Theta_{\G,\SS}^\chi=\Phi_{\G,\SS}^\chi$, 
it remains to 
prove that
  \[\frac{1}{|d-a|}\cdot|\G_\aa \sslash \SS_\aa'(a,d)|=\frac{1}{(a,d)}\cdot|\G_\aa\backslash \SS_\aa(a,d)/\G_\aa|\;, \] 
which follows by a similar argument as~\eqref{5.15}. 
\end{proof}

\subsection{A trace formula on the cuspidal subspace}\label{s4.2}
In order to extract from~\eqref{TF2} the Eisenstein contribution,
we will use an equivalent version, proved in~\cite{P}, which can be stated for an arbitrary Fuchsian
group of the first kind $\G$. 

For a $\ov{\G}$-conjugacy class $\X\subset  \GL_2^+(\R)/\{\pm 1\}$, we define 
the following analogue of the conjugacy class invariant $\e$ for $\G$: 
\[\e_\G(\X)=\begin{cases} \phantom{xx}
\dfrac{|\G\backslash\H| }{2 \pi} &  \text{if $M_X$ scalar,}\vspace{2mm} \\
\dfrac{\sgn \DD(\X)}{|\stab_{\ov{\G}} M_\X|} &  \text{ otherwise,}           
            \end{cases}
\]
where $|\G\backslash\H|$ is the area of a fundamental domain for $\G$ with respect 
to the standard hyperbolic metric, and we use 
the convention that $1/\infty=0$. Any double coset $\SS\subset \wg$ 
contains only finitely many conjugacy classes~$\X$ with $\e_\G(\X)\ne 0$, 
namely the elliptic, scalar, and those hyperbolic classes that 
contain an element fixing two distinct cusps of $\G$, for which $\e_\G(X)=1$
(see Lemma~\ref{L4.4}). We show in~\cite[Sec. 4]{P} that the 
trace formula in Theorem~\ref{T2} is equivalent to the following statement:
 \be \label{TFF} \begin{split}
 \tr([\SS], M_k(\G,\chi)+ S_k^c(\G,\chi)) \,=\,
 \sum_{\X\subset \ov{\SS}}p_{k-2}(\tr M_\X, \det M_\X)\;\wc(M_\X)\;\e_\G(\X)   \\
 \+ \delta_{k,2}\delta_{\chi,{\bf 1}}\; \sum_{\ss\in\G\backslash\SS} \wc(\ss)\;,
 \end{split}
 \ee
where the sum is over $\ov{\G}$-conjugacy classes $\X$ in $\ov{\SS}$ with 
representative $ M_\X\in \SS$. 

We now use Theorem~\ref{T5.1} to obtain a trace formula on the 
cuspidal subspace from~\eqref{TFF}, after first recalling a result of 
J. Oesterl\'e. 
\begin{lemma}[Oesterl\'e]\label{L4.4} Let $\G$ be a Fuchsian subgroup of the 
first kind and let $M\in \wg$ such that $\stab_{\ov{\G}} M$ is finite. 
Then~$M$ is either elliptic, or it is hyperbolic fixing two distinct cusps 
of~$\G$. In the latter case we have $|\stab_{\ov{\G}} M|=1$
\end{lemma}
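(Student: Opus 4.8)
The plan is to identify $\stab_{\ov\G}M$ with a centralizer and then read off its size from the $\PGL_2^+(\R)$-conjugacy type of $M$. Writing $\ov M$ for the image of $M$ in $\PGL_2^+(\R)=\GL_2^+(\R)/\{\pm1\}$, the condition $\pm AMA^{-1}=M$ says exactly that $A$ lies in the centralizer $Z(\ov M)$ of $\ov M$, so $\stab_{\ov\G}M=\ov\G\cap Z(\ov M)$. I would then split into the four conjugacy types according to $\DD=\tr(M)^2-4\det M$ and whether $M$ is scalar. If $M$ is scalar, $Z(\ov M)$ is all of $\PGL_2^+(\R)$ and the stabilizer is the infinite group $\ov\G$, so this case is excluded by hypothesis. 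If $M$ is elliptic ($\DD<0$), it fixes a point of $\H$ and $Z(\ov M)$ is the compact one-parameter group of rotations about that point (a conjugate of $\operatorname{SO}(2)/\{\pm1\}$); since $\ov\G$ is discrete, $\ov\G\cap Z(\ov M)$ is automatically finite, which is the first alternative in the statement.

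The remaining cases are parabolic and hyperbolic, where I expect the real work. Here $Z(\ov M)$ is a one-parameter subgroup isomorphic to $\R$ — the unipotent group $U_p$ fixing the single fixed point $p$ in the parabolic case, and the split torus $A_{p,q}$ preserving the geodesic between the two fixed points $p,q$ in the hyperbolic case. In either case $Z(\ov M)$ is torsion-free, so the discrete subgroup $\stab_{\ov\G}M=\ov\G\cap Z(\ov M)$ is either trivial or infinite cyclic; in particular the hypothesis that $\stab_{\ov\G}M$ is finite forces $\stab_{\ov\G}M=\{1\}$, i.e.\ no nontrivial element of $\G$ commutes with $M$. The crux is then to translate this into a statement about the fixed points, namely the dichotomy that \emph{for $M\in\wg$, if some fixed point of $M$ on $\partial\H$ is not a cusp of $\G$, then $\ov\G\cap Z(\ov M)$ is infinite}. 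The main input is that an element of the commensurator carries $\cps(\G)$ to $\cps(\G)$ (because $\G$ and $M\G M^{-1}$ share a common finite-index subgroup, whose cusps coincide with those of $\G$), together with the standard fact that the stabilizer in a Fuchsian group of a boundary point never contains both parabolic and hyperbolic elements.

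Granting the dichotomy, the parabolic case is ruled out: its unique fixed point $p$ would have to be a cusp, but then $\G$ contains a parabolic fixing $p$, which lies in $U_p=Z(\ov M)$ and commutes with $M$, contradicting $\stab_{\ov\G}M=\{1\}$. In the hyperbolic case the dichotomy forces both fixed points $p,q$ to be cusps, giving the second alternative. Finally $|\stab_{\ov\G}M|=1$ is immediate: since $p$ is a cusp it is a parabolic fixed point of $\G$, hence not a hyperbolic fixed point of $\G$, so no hyperbolic element of $\G$ has axis $(p,q)$ and $\ov\G\cap A_{p,q}=\{1\}$. The hard part is the dichotomy of the previous paragraph: everything else is bookkeeping on conjugacy types, but pinning down that a non-cusp fixed point of a commensurator element is forced to be shared with a same-type element of $\G$ is exactly where the first-kind and commensurability hypotheses must enter, and I would treat it as the technical heart of the proof.
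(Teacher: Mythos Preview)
Your approach is correct and essentially the same as the paper's: both reduce the lemma to Oesterl\'e's classification of non-elliptic elements $M\in\wg$ (parabolic fixing a cusp; hyperbolic sharing its axis with a hyperbolic of $\G$; hyperbolic fixing two cusps), which the paper simply cites from~\cite{O77} rather than reproving. Your centralizer framework and the observation that $\ov\G\cap Z(\ov M)$ is either trivial or infinite cyclic in the parabolic/hyperbolic cases make the deduction cleaner than the paper's one-line appeal, and your ``dichotomy'' is exactly equivalent to Oesterl\'e's trichotomy. One small caution: the two hints you list (commensurator preserves cusps; no mixed parabolic/hyperbolic stabilizers) do not by themselves yield the parabolic case of the dichotomy---that a parabolic $M\in\wg$ must fix a cusp genuinely uses finite covolume (e.g.\ via a horoball or fundamental-domain argument), so your instinct that this is where the first-kind hypothesis enters is right, but the mechanism is not the cusp-preservation fact you name.
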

\begin{proof}
More precisely, it is shown in~\cite[Proof of Theorem 2]{O77} that 
that any non-scalar $M\in \wg$ with $\tr^2(M)\ge 4\det(M)$ falls 
in one of three  cases: $M$ is parabolic fixing a cusp of~$\G$; 
$M$ is hyperbolic with the same fixed points as those of a hyperbolic matrix in $\G$; 
or $M$ is hyperbolic fixing two cusps. It immediately follows that  $\stab_{\G} M$ 
is infinite in the first two cases, and $|\stab_{\ov{\G}} M|=1$ in the last case. 
\end{proof}
Note that formula~\eqref{TFF} makes sense for an arbitrary
Fuchsian group of the first kind with cusps and an arbitrary double coset $\SS$, 
and we indeed expect it to hold in this level of generality. We therefore 
state the next theorem so that formula~\eqref{TFS3} below holds 
whenever~\eqref{TFF} does, under a mild assumption on the double coset~$\SS$. 
\begin{theorem}\label{TSS1} Let $\G$ be a Fuchsian subgroup of the 
first kind with cusps, and let $\SS\subset \wg$ be a double coset 
satisfying Assumption~\ref{ass4}.  
Then the trace formula~\eqref{TFF} is equivalent to   
 \be\label{TFS3} \begin{split}
 \tr([\SS], S_k(\G,\chi)+S_k^c(\G,\chi))=
 \sum_{\substack{\X,\, \DD(\X)\le 0}}p_{k-2}(\tr M_\X, \det M_\X) \;\wc(M_\X)\;\e_\G(\X) \\
 -  \sum_{a,d>0}\min(a,d)^{k-1}\Phi_{\G,\SS}^\chi (a,d)
  +2 \delta_{k,2}\delta_{\chi,{\bf 1}}\; \sum_{\ss\in\G\backslash\SS} \wc(\ss)\;,
 \end{split} \ee
where the sum is over $\G$-conjugacy classes $\X$ contained in $\ov{\SS}$, 
and $\Phi_{\G,\SS}^\chi$ is defined in~\eqref{1.8}. 
 
In particular, Theorem~\ref{TSS} holds under the assumptions of Theorem \ref{T2}.
 \end{theorem}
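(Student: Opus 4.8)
The plan is to exploit the direct-sum decomposition $M_k(\G,\chi)=E_k(\G,\chi)\oplus S_k(\G,\chi)$, which gives
\[ \tr([\SS],S_k(\G,\chi)+S_k^c(\G,\chi))=\tr([\SS],M_k(\G,\chi)+S_k^c(\G,\chi))-\tr([\SS],E_k(\G,\chi)). \]
First I would substitute formula~\eqref{TFF} for the first term on the right and formula~\eqref{5.8} of Theorem~\ref{T5.1}(a) for the Eisenstein trace. The subtraction immediately doubles the exceptional term, producing the coefficient $2$ in the term $2\delta_{k,2}\delta_{\chi,{\bf 1}}\sum_{\ss\in\G\backslash\SS}\wc(\ss)$ appearing in~\eqref{TFS3}; it then remains to match the conjugacy-class contributions.

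Next I would split the sum over $\ov{\G}$-conjugacy classes $\X\subset\ov{\SS}$ in~\eqref{TFF} according to the sign of $\DD(\X)$. By Lemma~\ref{L4.4} and the remark preceding~\eqref{TFF}, the only classes with $\e_\G(\X)\ne 0$ are the scalar and elliptic ones, for which $\DD(\X)\le 0$, and the hyperbolic classes fixing two distinct cusps, for which $\DD(\X)>0$ and $\e_\G(\X)=1$. The classes with $\DD(\X)\le 0$ contribute precisely the first sum in~\eqref{TFS3}, so the whole matter reduces to showing that the hyperbolic contribution equals $\sum_{a,d>0}\bigl(a^{k-1}-\min(a,d)^{k-1}\bigr)\Phi_{\G,\SS}^\chi(a,d)$, which is exactly what is left after removing $-\sum_{a,d>0}a^{k-1}\Phi_{\G,\SS}^\chi(a,d)$ coming from the Eisenstein term.

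For the hyperbolic sum I would evaluate the Gegenbauer polynomial on eigenvalues: if $\ss$ fixes two cusps then after scaling its eigenvalues are $\sgn(\ss)a,\sgn(\ss)d$ with $a,d>0$, and since $(1-tx+nx^2)^{-1}=\bigl((1-\lambda x)(1-\lambda' x)\bigr)^{-1}$ one gets $p_{k-2}(\tr\ss,\det\ss)=\sgn(\ss)^k\,(a^{k-1}-d^{k-1})/(a-d)$. Grouping the hyperbolic classes by their unordered eigenvalue magnitudes $\{a,d\}$ and invoking relation~\eqref{5.7} of Theorem~\ref{T5.1}(b) in the form $|a-d|\,\Phi_{\G,\SS}^\chi(a,d)=\sum_{\ss\in H_{\G,\SS}(a,d)}\sgn(\ss)^k\wc(\ss)$, the factor $a-d$ cancels and the hyperbolic sum collapses to $\sum_{a>d>0}(a^{k-1}-d^{k-1})\Phi_{\G,\SS}^\chi(a,d)$. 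Using the symmetry of $\Phi_{\G,\SS}^\chi$ one checks that this equals $\tfrac12\sum_{a,d>0}(\max(a,d)^{k-1}-\min(a,d)^{k-1})\Phi_{\G,\SS}^\chi(a,d)=\sum_{a,d>0}(a^{k-1}-\min(a,d)^{k-1})\Phi_{\G,\SS}^\chi(a,d)$, as required; combining with the Eisenstein term then produces the $-\sum_{a,d>0}\min(a,d)^{k-1}\Phi_{\G,\SS}^\chi(a,d)$ of~\eqref{TFS3}.

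The main obstacle is the bookkeeping of the hyperbolic classes: each such class fixes two cusps, at which the roles of $a$ and $d$ are interchanged, so one must check that $H_{\G,\SS}(a,d)$ and $H_{\G,\SS}(d,a)$ describe the same classes and that the symmetry of $\Phi_{\G,\SS}^\chi$ is invoked exactly once in passing from the ordered sum $\sum_{a>d>0}$ to the full sum $\sum_{a,d>0}$; the sign factor $\sgn(\ss)^k$ produced by the eigenvalue evaluation must also be tracked, as it is precisely the one weighting $\Phi_{\G,\SS}^\chi$ in~\eqref{1.8}. Finally, the last assertion follows because under Assumption~\ref{eq_star}, which implies Assumption~\ref{ass4}, formula~\eqref{TFF} is equivalent to Theorem~\ref{T2}; the regrouping of $\ov{\G}$-conjugacy classes into $\ov{\G}_1$-conjugacy classes established in~\cite{P} turns the $\DD(\X)\le 0$ sum into $\sum p_{k-2}\,\cc_{\G,\SS}^\chi\,\e$, while the stated symmetry hypothesis gives $\tr([\SS],S_k^c)=\tr([\SS],S_k)$, halving both sides, and rewriting the cusp term through~\eqref{1.7}--\eqref{1.8} recovers~\eqref{TFS2}.
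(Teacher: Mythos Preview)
Your proposal is correct and follows essentially the same route as the paper: both arguments split the sum in~\eqref{TFF} according to the sign of $\DD(\X)$, identify the hyperbolic contribution via Lemma~\ref{L4.4}, evaluate $p_{k-2}$ on the eigenvalues, invoke~\eqref{5.7} to convert the hyperbolic sum into $\sum_{d>a>0}(d^{k-1}-a^{k-1})\Phi_{\G,\SS}^\chi(a,d)$, and then use the symmetry of $\Phi_{\G,\SS}^\chi$ together with~\eqref{5.8} to produce the $-\sum_{a,d>0}\min(a,d)^{k-1}\Phi_{\G,\SS}^\chi(a,d)$ and the doubled $\delta$-term. The only difference is purely organizational: the paper packages the computation as ``$\tr_{>0}=\tr(E_k)-\sum\min(a,d)^{k-1}\Phi+\delta$'', whereas you subtract $\tr(E_k)$ from~\eqref{TFF} first and then cancel.
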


Just like the trace formula~\eqref{TFF} is equivalent to that in Theorem~\ref{T2},
formula~\eqref{TFS3} is equivalent to that in Theorem~\ref{TSS} 
(see~\cite[Sec. 4]{P} for the details). Note that by Remark~\ref{r2}, 
we can scale~$\SS$ so that the sum over $a,d$ is over \emph{integers} $a,d>0$ 
with $ad=\det M$, for some~$M\in\SS$.

\comment{
 The sum over $a,d$ in \eqref{TFS2} can be written more intrinsically as follows: 
  \be
  \sum_{a,d>0}\min(a,d)^{k-1}\Phi_{\G,\SS}^\chi (a,d)=
  \sum_{\aa\in C(\G,\chi)}\sum_{ \ss\in\G_\aa\backslash \SS_\aa/\G_\aa }
  \frac{ \min(|\lambda_\ss|,|\lambda_\ss'| )^{k-1} }{(|\lambda_\ss|,|\lambda_\ss'|) }  \sgn(\ss)^k \wc(\ss)\;,\ee
where $\lambda_\ss, \lambda_\ss'$ are the eigenvalues of $\ss$, and $(a,d)$ is 
defined in Theorem~\ref{T5.1}. 
}

\begin{proof}
Let $\tr_{>0}(\G,\chi,\SS,k)$ be the sum in~\eqref{TFF} over the conjugacy 
classes $\X\subset \ov{\SS}$ with~$\DD(\X)>0$. Only the hyperbolic classes 
$\X$ with representatives $M_\X\in\SS$ fixing two (distinct) cusps of~$\G$ 
contribute to the sum, and $\e_\G(\X)=1$ for these classes, by Lemma~\ref{L4.4}. 
Let $H_{\G,\SS}(a,d)\subset \SS$ be a system of representatives for these 
conjugacy classes that have eigenvalues $a,d$ or $-a,-d$. Since 
$p_{k-2}(a+d,ad)=\frac{d^{k-1}-a^{k-1}}{d-a}$, we obtain (recall~$\sgn(\ss)$ 
is the sign of the  eigenvalues of $\ss$):
  \[ \begin{aligned}
  \tr_{>0}(\G,\chi,\SS,k)&=\sum_{d>a>0}\frac{d^{k-1}-a^{k-1}}{d-a} \sum_{\ss\in H_{\G,\SS}(a,d) } \sgn(\ss)^k\wc(\ss)  \\
  &=\sum_{\substack{d>a>0}} (d^{k-1}-a^{k-1})\Phi_{\G,\SS}^\chi(a,d)\\
  &=\tr([\SS], E_k(\G,\chi))-\sum_{a,d>0}\min(a,d)^{k-1}\Phi_{\G,\SS}^\chi(a,d)+
  \delta_{k,2}\delta_{\chi,{\bf 1}}\!\! \sum_{\ss\in\G\backslash\SS}\!\! \wc(\ss)\;,
  \end{aligned} \]
where the second equality follows from part (b), and the third from part (a) of 
Theorem \ref{T5.1}, using also the symmetry of $\Phi_{\G,\SS}^\chi$. The
equivalence of the trace formulas~\eqref{TFF} and ~\eqref{TFS2} is now 
clear.  
\end{proof}
\begin{remark}\label{r4.6}
The sum over conjugacy classes $\X$ with $\DD(\X)=0$ in~\eqref{TFS2}
contains scalar classes only, by the definition of $\e_\G(\X)$, so it equals
 \[\frac{|\G\backslash\H|}{2\pi}\sum_{\lambda} (k-1)\lambda^{k-2}  \wc( 
\lambda I), \]
 where the sum is over $\lambda$ with $\lambda I\in\SS$ and $\lambda>0$ if 
$-1\in \G$. \vspace{2mm}
\end{remark}

\subsection{Another formula for the cuspidal sum~$\Phi_{\G,\SS}^\chi$}\label{sec4}

Assume now that $\G$ is a finite index subgroup of $\G_1=\SL_2(\Z)$. 
To compute explicitly the function~$\Phi_{\G,\SS}^\chi$ appearing in 
Theorems~\ref{T5.1} and~\ref{TSS}, it is convenient to parametrize 
the cusps of~$\G$ by the space of double cosets $\G\backslash\G_1/\G_{1\infty}$, 
where $\G_{1\infty}$ denotes the stabilizer of the cusp~$\infty$ in $\G_1$. 

Let $\chi$ be a character of $\G$, $k\ge 2$, and assume that $\chi(-1)=(-1)^k$ 
if $-1\in \G$. Let $R(\G)\subset \G_1$ be a system of representatives for \emph{the cusp space} 
$\G\backslash\G_1/\G_{1\infty}$, which is in bijection with a set of 
representatives $C(\G)$ for $\G$-equivalence classes of cusps of~$\G$ 
by $C\mapsto \aa=C\infty$. The set $C(\G,\chi)$ introduced in~\S\ref{s4.1.1} 
is then in bijection with a set 
\be \label{39}
  R(\G,\chi)=\big\{C\in R(\G)\,|\, \chi(\e CT^jC^{-1})=\e^k \text{ if } 
  \e CT^jC^{-1}\in \G \text{ for some } \e\in \{\pm 1\}\big\} \;.\ee     
For $C\in R(\G,\chi)$, let $\om(C)$ be the smallest positive integer 
such that $CT^{\om(C)}C^{-1}\in \pm\G $.
\footnote{That is, $\om(C)$ is the width of the cusp $C$ if 
$CT^{\om(C)}C^{-1}\in\G $, or it is half the width if $-1\notin\G$ 
and $CT^{\om(C)}C^{-1}\in-\G $.} 
\begin{lemma} \label{p6.1} Let $\SS\subset \M$ be a double coset satifying Assumption~\ref{eq_star}. 
The function~$\Phi_{\G,\SS}^{\chi}$ introduced in~\eqref{1.8} is given by
  \be\label{42} \Phi_{\G,\SS}^{\chi}(a,d)=\frac{1}{(a,d)}\sum_{C\in R(\G,\chi)} 
  \sum_{\substack{M\in \la T^{\om(C)}\ra\backslash M_{a,d}^\infty/\la T^{\om(C)}\ra \\ \pm CM C^{-1}\in\SS }} (\pm 1)^k\wc(\pm CM C^{-1}) \;,\ee
where $M_{a,d}^\infty=\{\sm ab0d\in \M \}$. 
\end{lemma}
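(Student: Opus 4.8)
The plan is to unfold the definition~\eqref{1.8} of $\Phi_{\G,\SS}^\chi(a,d)$ by parametrizing the cusps of $\G$ through $R(\G)$ and replacing, at each cusp, the width-normalized scaling matrix $C_\aa$ by an \emph{integral} matrix $C\in\G_1$. For $C\in R(\G)$ set $\aa=C\infty\in C(\G)$; since $C^{-1}\aa=\infty$, a scaling matrix $C_\aa$ as in the Remark after Theorem~\ref{TSS} differs from $C^{-1}$ only by a left factor $\mathrm{diag}(\mu^{-1},\mu)$ with $\mu^2=\om(C)$. Conjugation by this diagonal factor fixes the diagonal entries of any upper triangular matrix and only rescales the off-diagonal entry, so for $\ss\in\SS_\aa$ the pair $(a,d)$ read off from $C_\aa\ss C_\aa^{-1}=\sgn(\ss)\sm a*0d$ coincides with the one read off from $C^{-1}\ss C$. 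As $\ss$ fixes $\aa=C\infty$, the matrix $C^{-1}\ss C$ is upper triangular, and it is integral because $C\in\G_1$; hence $\ss\in\SS_\aa(a,d)$ if and only if $\pm C^{-1}\ss C\in M_{a,d}^\infty$, the sign being $\sgn(\ss)$.

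First I would introduce the change of variables $\ss=\pm CMC^{-1}$, equivalently $M=\pm C^{-1}\ss C\in M_{a,d}^\infty$. An element $M\in M_{a,d}^\infty$ has eigenvalues $a,d>0$, so $CMC^{-1}$ has positive eigenvalues and $\sgn(\ss)$ equals the chosen sign; thus the weight transforms cleanly as $\sgn(\ss)^k\wc(\ss)=(\pm1)^k\wc(\pm CMC^{-1})$, while $\ss\in\SS$ becomes $\pm CMC^{-1}\in\SS$. This yields a bijection between $\SS_\aa(a,d)$ and $\{M\in M_{a,d}^\infty:\pm CMC^{-1}\in\SS\}$: injectivity holds because $\pm CMC^{-1}=\pm CM'C^{-1}$ forces $M=\pm M'$, and two elements of $M_{a,d}^\infty$ agreeing up to sign must coincide.

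Next I would transport the two-sided $\G_\aa$-action. Using $C^{-1}\g_\aa C=\sgn(\g_\aa)T^{\om(C)}$, conjugation by $C$ intertwines the left and right action of $\G_\aa$ on $\SS_\aa(a,d)$ with that of $\la T^{\om(C)}\ra$ (up to sign) on the image in $M_{a,d}^\infty$, hence induces a bijection of double-coset spaces; since the diagonal $(a,d)$ is preserved, the prefactor $1/(a,d)$ is unchanged. The remaining points are the sign conventions. Because every element of $M_{a,d}^\infty$ has positive diagonal, $-I$ does not preserve $M_{a,d}^\infty$, so when $-1\in\G$ the action of $\pm\la T^{\om(C)}\ra$ factors through $\la T^{\om(C)}\ra$ and the two double-coset spaces coincide, while both admissible signs in $\pm CMC^{-1}\in\SS$ give the same summand (exactly as in the footnote to~\eqref{1.10}). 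When $-1\notin\G$, Assumption~\ref{ass4}, which is implied by the standing hypothesis~\ref{eq_star}, gives $\SS\cap-\SS=\emptyset$, so at most one sign occurs and the summand is unambiguous. I would also record that $\sgn(\ss)^k\wc(\ss)$ is constant on $\G_\aa$-double cosets precisely when $\aa\in C(\G,\chi)$, using $\wc(\g\ss\g')=\chi^{-1}(\g\g')\wc(\ss)$ together with $\chi(\g_\aa)=\sgn(\g_\aa)^k$; this is what lets the outer sum range over $C(\G,\chi)$.

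Finally I would match the index sets of the outer sums, i.e.\ check that $\aa=C\infty\in C(\G,\chi)$ exactly when $C\in R(\G,\chi)$: this is the translation, through $C^{-1}\G_\aa C=\pm\la T^{\om(C)}\ra$, of the defining condition $\chi(\g)=\sgn(\g)^k$ on $\G_\aa$ into the condition $\chi(\e CT^jC^{-1})=\e^k$ in~\eqref{39}. Summing the resulting per-cusp identities over $C\in R(\G,\chi)$ produces~\eqref{42}. I expect the main obstacle to be the consistent bookkeeping of the three occurrences of $\pm$ in the double-coset matching, and in particular keeping the cases $-1\in\G$ and $-1\notin\G$ apart; the positivity of the diagonal of $M_{a,d}^\infty$ and Assumption~\ref{ass4} are the two facts that make this bookkeeping close.
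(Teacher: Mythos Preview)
Your proposal is correct and follows essentially the same route as the paper: you choose the scaling matrix $C_\aa=\mathrm{diag}(\om(C)^{-1/2},\om(C)^{1/2})\,C^{-1}$, use it to set up the bijection $M\mapsto \pm CMC^{-1}$ between $\la T^{\om(C)}\ra\backslash M_{a,d}^\infty/\la T^{\om(C)}\ra$ (restricted by $\pm CMC^{-1}\in\SS$) and $\G_\aa\backslash\SS_\aa(a,d)/\G_\aa$, and handle the sign via Assumption~\ref{eq_star} exactly as the paper does. Your write-up in fact supplies more detail than the paper's terse ``equal term by term'' conclusion, and the extra care you take with the sign bookkeeping and with the invariance of $\sgn(\ss)^k\wc(\ss)$ on $\G_\aa$-double cosets is all to the point.
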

\begin{proof}
For $C\in R(\G,\chi)$, the stabilizer $\G_\aa$ of the cusp $\aa=C\infty$ 
is generated by  $\g_\aa=\pm CT^{\om(C)}C^{-1} \in \G$, and as scaling 
matrix we can take 
\[ C_\aa=\sm {\om(C)^{-1/2}}00{\om(C)^{1/2}}C^{-1}\;. \]
The bijection  $R(\G,\chi)\simeq C(\G,\chi)$ given by $C\mapsto \aa=C\infty$, 
then yields a bijection
\[ \{M\in \la T^{\om(C)}\ra\backslash M_{a,d}^\infty/\la T^{\om(C)}\ra \;:\; \pm CMC^{-1}\in\SS\} 
  \longrightarrow   \G_\aa\backslash\SS_\aa(a,d)/\G_\aa\;, \]
given by $M\mapsto \pm CMC^{-1}\in\SS$, where the sign can be chosen 
positive if $-1\in\G$, and only one choice is possible if $-1\notin \G$ 
(since $\SS\cap(-\SS)=\emptyset$ by Assumption~\ref{eq_star}). We conclude that  
the right hand sides of \eqref{42} and \eqref{1.8} are equal term by term.
\end{proof}

\section{Explicit trace formulas for \texorpdfstring{$\G_0(N)$}{Gamma0[N]}} \label{sec5}

We now specialize $\G=\G_0(N)$ and we compute the functions $\cc_{\G,\SS}^\chi$, 
$\Phi_{\G,\SS}^\chi$ in~\eqref{1.10},~\eqref{1.8} to prove Theorems~\ref{T3} 
and~\ref{T4} in the introduction. We use the formula for 
$\Phi_{\G,\SS}^\chi$ given in~\eqref{42}. In Section~\ref{sec5.1} we consider the usual Hecke operators 
for~$S_k(\G,\chi)$, while in Section~\ref{sec5.2} we consider a composition 
of Hecke and Atkin-Lehner operators on $S_k(\G)$. In Section~\ref{sec4.3} we 
prove Corollary~\ref{C2}.

\subsection{Trace of Hecke operators on 
\texorpdfstring{$\G_0(N)$}{Gamma0[N]} with  Nebentypus}\label{sec5.1}
We take $\G=\Gamma_0(N)$, $\SS=\DD_n$ as in \eqref{delta}, and $\chi$, 
$\wc$ defined there. If $\chi(-1)=(-1)^k$, we have 
$\cc_{\G,\SS}^\chi(M)=\cc_{N,\chi}(M)$, defined in~\eqref{6}. The function 
$\cc_{N,\chi}(M)$ was computed explicitly by Oesterl\'e~\cite[Eq.(35)]{O77},
and it satisfies Assumption~\ref{h}. We sketch the proof in the next lemma, 
since we will use it later. Recall that for $u|N$, $u^2|t^2-4n$, 
in the introduction we have defined  the set
  \[ S_{N}(u,t,n)=\{\alpha\in (\Z/N\Z)^\times: \ \alpha^2-t\alpha +n\equiv 0 
  \pmod{Nu} \}.\] 
\begin{lemma}[Oesterl\'e] \label{L9}For $M=\sm ABCD \in \M_n$, let $t=\tr(M)$, and $u=(G,N)$ where $G$ is the content
of the quadratic form $Q_M=[C,D-A,-B]$. Then 
$$\cc_{N,\chi}(M)=B_{N,\chi}(u, t, n):=\frac{\varphi_1(N)}{\varphi_1(N/u)}\sum_{\alpha\in
S_{N}(u,t,n)} \chi(\alpha).
$$ 
\end{lemma}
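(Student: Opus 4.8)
The plan is to evaluate the coset sum in~\eqref{6} by parametrizing $\G\backslash\G_1$ explicitly and tracking two entries of the conjugate $\gamma M\gamma^{-1}$, both of which turn out to be values of the quadratic form $Q_M$. Throughout I write $M=\sm ABCD$ and a coset representative as $\gamma=\sm abcd\in\G_1$ (so its entries are lower case and distinct from those of $M$).

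First I would fix the standard bijection $\G\backslash\G_1\xrightarrow{\sim}\PP^1(\Z/N\Z)$ sending the coset of $\gamma=\sm abcd$ to the class $(c:d)$ of its bottom row; this is well defined because left multiplication by $\G_0(N)$ scales the bottom row mod $N$ by a unit. A direct computation then gives the bottom-left entry $c_{\gamma M\gamma^{-1}}=Q_M(d,-c)$ and the top-right entry $b_{\gamma M\gamma^{-1}}=-Q_M(-b,a)$, so both are values of $Q_M$ and hence divisible by its content $G$, while the top-left entry $\alpha:=a_{\gamma M\gamma^{-1}}$ satisfies $\alpha+d_{\gamma M\gamma^{-1}}=t$ and $\alpha\,d_{\gamma M\gamma^{-1}}-b'c'=n$, where $b',c'$ denote the two off-diagonal entries.

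Next I would translate the membership $\gamma M\gamma^{-1}\in\Delta_n$. Since the determinant is automatically $n$, the condition is $N\mid c_{\gamma M\gamma^{-1}}=Q_M(d,-c)$ together with $(\alpha,N)=1$; because conjugation of a matrix that is upper triangular mod $N$ by an upper-triangular-mod-$N$ element fixes the diagonal, the residue $\alpha\bmod N$ depends only on the coset. Combining $\alpha^2-t\alpha+n=-b'c'$ with $N\mid c'$ and $u\mid G\mid b'$ yields $Nu\mid\alpha^2-t\alpha+n$, so every contributing coset determines a well-defined $\alpha\in S_N(u,t,n)$. This already shows the answer depends only on $t,n,u=(G,N)$, giving Assumption~\ref{h}.

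The crux, and the hard part, is to show each $\alpha\in S_N(u,t,n)$ is hit by exactly $\varphi_1(N)/\varphi_1(N/u)$ cosets. Identifying the coset $(c:d)$ with the $M$-stable line $\langle(d,-c)^T\rangle$ of eigenvalue $\alpha$, this fiber is the number of $\PP^1(\Z/N\Z)$-points in $\ker(M-\alpha I)$, which I would count prime by prime via the Chinese Remainder Theorem. At $p^e\|N$ one uses that $M-\alpha I$ has the same associated form $Q_M$, hence content $G$, while $v_p(\det(M-\alpha I))=v_p(\alpha^2-t\alpha+n)\ge e+f_p$ with $f_p=v_p(u)$. A short valuation argument (if $v_p(A-\alpha)<f_p$ then $v_p(D-\alpha)=v_p(A-\alpha)$, forcing $v_p(\det)=2v_p(A-\alpha)<2f_p\le e+f_p$, a contradiction) shows the content of the \emph{entries} of $M-\alpha I$ is exactly $p^{f_p}$, so its Smith normal form over $\Z/p^e\Z$ is $\operatorname{diag}(p^{f_p},0)$. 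Counting unimodular kernel vectors up to units then gives the local factor $\#\PP^1(\Z/p^e\Z)/\#\PP^1(\Z/p^{e-f_p}\Z)$, whose product over $p\mid N$ is $\varphi_1(N)/\varphi_1(N/u)$. Summing over the fibers finally gives
\[
\cc_{N,\chi}(M)=\sum_{\alpha\in S_N(u,t,n)}\chi(\alpha)\cdot\frac{\varphi_1(N)}{\varphi_1(N/u)}=B_{N,\chi}(u,t,n),
\]
as claimed. I expect the local elementary-divisor count, and in particular pinning down that the entry-content drops to exactly $p^{f_p}$, to be the only delicate step.
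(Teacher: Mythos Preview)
Your approach coincides with the paper's: both identify $\G\backslash\G_1$ with $\PP^1(\Z/N\Z)$ via the bottom row, compute the off-diagonal entries of $\gamma M\gamma^{-1}$ as values of $Q_M$, and produce the map $\tau$ sending each contributing coset to its upper-left entry $\alpha\in S_N(u,t,n)$. The paper then simply asserts that every fiber $\tau^{-1}(\alpha)$ has cardinality $\varphi_1(N)/\varphi_1(N/u)$ and refers to Oesterl\'e's thesis~\cite{O77} for the verification; your Smith-normal-form argument supplies exactly this deferred step, and it is correct.

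One point of phrasing deserves care: when you write that $M-\alpha I$ ``has the same associated form $Q_M$, hence content $G$'', what is true is that the \emph{form} $Q_{M-\alpha I}=Q_M$ has content $G$, while the \emph{entry} content $\gcd(A-\alpha,B,C,D-\alpha)$ only \emph{divides} $G$ (since it divides $B$, $C$, and $(D-\alpha)-(A-\alpha)=D-A$). This upper bound $g_1\le v_p(G)$ is actually needed: together with your lower bound $g_1\ge f_p$ and the identity $f_p=\min(v_p(G),e)$ it forces the first elementary divisor over $\Z/p^e\Z$ to be exactly $p^{f_p}$, and then $g_1+g_2\ge e+f_p$ gives the second one as~$0$. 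With that clarification the local count $\#\PP^1(\Z/p^e\Z)/\#\PP^1(\Z/p^{e-f_p}\Z)$ goes through and the product over $p\mid N$ is $\varphi_1(N)/\varphi_1(N/u)$, as required.
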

\begin{proof}
For $X= \sm abcd \in \G_1$, we have $XMX^{-1}=\sm \a{-Q_M(-b,a)}{Q_M(-d,c)}{t-\a} $, where
$\alpha$ satisfies 
\begin{equation}\label{20.2}
\alpha^2-t\alpha+n = Q_M(-d,c)Q_M(-b,a)\;.
\end{equation}
The condition $XMX^{-1}\in \Delta_n$ is equivalent to $Q_M(d,-c)\equiv 0 \pmod N$ and
$(\alpha,N)=1$, so we have $\alpha\in S_N(u, t,n)$, where we set $u=(G,N)$. Moreover, 
$\alpha$ satisfies $M\big(\begin{smallmatrix}
-d\\c\end{smallmatrix} \big) \equiv \alpha \big(\begin{smallmatrix}
-d\\c\end{smallmatrix} \big) \pmod{N}$, that is
\begin{equation}\label{20.1}
\alpha d \equiv dA-cB \!\!\!\!\!\pmod{N}, \quad \alpha c \equiv cD-dC \!\!\!\!\!\pmod{N}.
\end{equation}
which determines its class mod $N$ uniquely depending only on the point $(c:d)\in
\PP^1(\Z/N\Z)$, namely on the coset of $X$ in $\G_0(N)\backslash\G_1$. Set
$\Sc_N(M)=\{X\in \G_0(N)\backslash\G_1: XMX^{-1}\in \Delta_n \}$.
We have therefore a well-defined map 
\[ 
\tau: \Sc_N(M) \longrightarrow S_N(u, t,n), \quad X\mapsto a_{XMX^{-1}}\;,
\]
and one can show that $|\tau^{-1}(\alpha)|=\varphi_1(N)/\varphi_1(N/u)$ 
independent of $\alpha\in S_N(u, t,n)$, finishing the proof. We refer to~\cite{O77}
for the details. 
\end{proof}
To compute the function $\Phi_{N,\chi}:=\Phi_{\G_0(N),\DD_n}^\chi$ in 
Theorem~\ref{TSS}, we use Lemma~\ref{p6.1}.

\begin{lemma} \label{Ta2} 
Let $k\ge 2$, $N\ge 1$, and $\chi$ a character of conductor $c_\chi|N$ 
such that $\chi(-1)=(-1)^k$. We have 
$$\Phi_{N,\chi}(a,d)=\sum_{\substack{N=rs\\(r,s)|(N/c(\chi),a-d )}}
\varphi((r,s))\chi(\alpha_{r,s}^{a,d})
$$
where $\alpha=\alpha_{r,s}^{a,d}$ is the unique solution mod $\frac{rs}{(r,s)}$ of
$\alpha\equiv a\!\! \pmod{r}$, $\alpha\equiv d\!\! \pmod{s}$. 
\end{lemma}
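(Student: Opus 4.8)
The plan is to apply Lemma~\ref{p6.1} directly to the case $\G=\G_0(N)$, $\SS=\DD_n$, reducing the computation of $\Phi_{N,\chi}(a,d)$ to a concrete combinatorial sum over the cusp space. First I would fix a system of representatives $R(\G_0(N))$ for the cusp space $\G_0(N)\backslash\G_1/\G_{1\infty}$; it is standard that the cusps of $\G_0(N)$ are parametrized by pairs, and I would choose representatives of the form $C=\sm{*}{*}{s}{*}$ indexed by the divisors of $N$, writing $N=rs$ and recording the local data at each cusp. The width $\om(C)$ of the cusp corresponding to the factorization $N=rs$ is the familiar $N/(s^2,N)$-type quantity, and I would express it cleanly in terms of $r,s$ and $(r,s)$.

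Next I would identify, for each representative $C\in R(\G_0(N),\chi)$, which matrices $M=\sm a{b}{0}{d}\in M_{a,d}^\infty$ satisfy $\pm CMC^{-1}\in\DD_n$, and compute the value of $\wc(\pm CMC^{-1})=\chi(a_{\pm CMC^{-1}})$. The condition $CMC^{-1}\in\DD_n$ forces congruence conditions relating $a$ and $d$ modulo the local parameters $r$ and $s$: concretely, conjugating an upper-triangular matrix by $C$ lands in $\DD_n$ precisely when the off-diagonal entry is controlled modulo $r$ and $s$, which after solving yields the constraint $(r,s)\mid a-d$ and produces the residue class $\alpha_{r,s}^{a,d}$ determined by $\alpha\equiv a\pmod r$, $\alpha\equiv d\pmod s$ via the Chinese Remainder Theorem. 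The character value is then $\chi(\alpha_{r,s}^{a,d})$, and the restriction to $C(\G,\chi)$ together with the vanishing of $\chi$ off its conductor introduces the divisibility $(r,s)\mid N/c(\chi)$.

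I would then count the double-coset orbits $\la T^{\om(C)}\ra\backslash M_{a,d}^\infty/\la T^{\om(C)}\ra$ that meet $\DD_n$ after conjugation. Since $M_{a,d}^\infty$ consists of matrices $\sm a{b}{0}{d}$ with $b$ running over residues, the two-sided action of $\la T^{\om(C)}\ra$ on $b$ is by translations $b\mapsto b+ma+nd$ scaled by the width, exactly as in the proof of~\eqref{5.15}; the number of surviving orbits contributes the factor $\varphi((r,s))$, matching the $1/(a,d)$ normalization against the orbit size. Assembling the contribution from each cusp $C=C_{r,s}$ gives the claimed sum over factorizations $N=rs$ with the stated constraints.

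The main obstacle I expect is the bookkeeping at the cusps: correctly choosing the representatives $C$, computing their widths $\om(C)$, and carefully tracking how the conjugation $M\mapsto CMC^{-1}$ interacts with the congruence conditions defining $\DD_n$ (the conditions $N\mid c_\ss$ and $(a_\ss,N)=1$), so that the residue class $\alpha_{r,s}^{a,d}$ and the divisibility conditions emerge exactly as stated. In particular one must verify that the orbit count produces precisely $\varphi((r,s))$ and that the sign ambiguity $\pm$ and the $\sgn(\ss)^k$ factor combine with the hypothesis $\chi(-1)=(-1)^k$ to give no extra sign, so that only the single term $\chi(\alpha_{r,s}^{a,d})$ survives. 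Once these local computations are pinned down, summing over $C\in R(\G_0(N),\chi)$ yields the formula, and I would conclude by checking the symmetry in $a,d$ predicted by Theorem~\ref{T5.1}(b) as a consistency check.
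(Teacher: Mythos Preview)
Your overall strategy---applying Lemma~\ref{p6.1}, choosing explicit cusp representatives, and reading off the congruence conditions from $CMC^{-1}\in\DD_n$---is exactly the paper's approach. However, there is a genuine bookkeeping error in where you locate the factor $\varphi((r,s))$.

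You write that the cusps are ``indexed by the divisors of $N$'' and that ``the number of surviving orbits contributes the factor $\varphi((r,s))$''. Both claims are wrong, and the two errors compensate for each other only by accident. The cusps of $\G_0(N)$ are \emph{not} in bijection with divisors of $N$: for each factorization $N=rs$ there are exactly $\varphi((r,s))$ inequivalent cusps, represented by matrices $C=\sm{p}{*}{r}{q}$ with $q$ ranging over a set $\Sc_r$ of size $\varphi((r,s))$. This is the true source of the factor $\varphi((r,s))$ in the final formula. On the other hand, the double-coset orbit count in $b$ does \emph{not} produce $\varphi((r,s))$: for a fixed cusp $C$ with width $\om(C)=s/(r,s)$, the number of residues $b\pmod{g\,\om(C)}$ (with $g=(a,d)$) satisfying the congruence $b\equiv q(a-\alpha)/r\pmod{s}$ is exactly $g$, and this $g$ is what cancels the prefactor $1/(a,d)$ in~\eqref{42}. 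So each individual cusp contributes $\chi(\alpha_{r,s}^{a,d})$, and summing over the $\varphi((r,s))$ cusps attached to the factorization $N=rs$ yields the term $\varphi((r,s))\chi(\alpha_{r,s}^{a,d})$.

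Once you correct the cusp parametrization, the rest of your outline (deriving $(r,s)\mid(a-d)$ and $(r,s)\mid N/c(\chi)$, and identifying $\alpha\equiv a\pmod r$, $\alpha\equiv d\pmod s$ via~\eqref{20.1}) goes through as in the paper.
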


\begin{proof} Since $-1\in\G$, formula \eqref{42} gives
\[\Phi_{N,\chi}(a,d)=\frac{1}{(a,d)}\sum_{C\in R(\G,\chi)} 
  \sum_{\substack{M\in \la T^{\om(C)}\ra\backslash M_{a,d}^\infty/\la T^{\om(C)}\ra\\CM C^{-1}\in\DD }} \chi(a_{CM C^{-1}})\;.
\]
We choose the set of representatives $R(\G)$ for $\G\backslash\G_1/\G_{1\infty}$ to consist of matrices 
$C=\sm p*rq$ with $N=rs$, and $q$
running through a set $\Sc_r$ with $|\Sc_r|=\varphi((r,s))$. The condition $\chi(CT^jC^{-1})=1$
whenever $CT^jC^{-1}\in \G$ in~\eqref{39} is the same as $\chi(1+prj)=1$ if $N|r^2 j$, namely if
$\frac{N}{(r,s)}|rj$. This can happen if and only if $\chi$ has conductor $c_\chi|
\frac{N}{(r,s)}$, hence $R(\G,\chi)$ consists of those $C\in R(\G)$ as above with
$(r,s)|(N/c_\chi)$. The width of the cusp $C\infty$ is $\om(C)=s/(r,s)$. 

Let $\alpha= a_{CM C^{-1}}$ for a fixed $C\in R(\G,\chi)$ as above, 
and set $M=\sm ab0d$ with $b$ running through the residues modulo $\om(C)g$, 
where $g=(a,d)$. By \eqref{20.1}, the condition $CM C^{-1}\in \Delta$ 
is equivalent to:
  \[ \alpha r\equiv dr \pmod{N}, \quad \alpha q\equiv aq-br \pmod{N}, \quad (\alpha,N)=1\;. \]
Therefore $\alpha \equiv d \pmod{s}$, $\alpha\equiv a
\pmod{r}$, which determines $\alpha$ uniquely modulo $rs/(r,s)$, thus it determines
$\chi(\alpha)$ uniquely since  $c_\chi| \frac{N}{(r,s)}$. We also have $(r,s)|(a-d)$
and since $(s,g)=1$, there are $g$ solutions $b\pmod{gs/(r,s)}$ of the congruence 
$b\equiv q(a-\alpha)/r \pmod s$, independent of $q\in \Sc_r$. We conclude 
  \be\label{31} \Phi_{N,\chi}(a,d)=\sum_{N=rs} \chi(\alpha)\cdot |\Sc_r| \ee
with $\alpha=\alpha_{r,s}^{a,d}$, $|\Sc_r|=\varphi((r,s))$, and $(r,s)|(a-d),
(r,s)|N/c_{\chi}$ in the summation range.  
\end{proof}

\subsection{Trace of Atkin-Lehner and Hecke operators for \texorpdfstring{$\G_0(N)$}{Gamma0[N]}}\label{sec5.2}
Let $\G=\G_0(N)$ and $\chi={\bf 1}$ the trivial character.  
For $N=\ell \ell'$ with $(\ell,\ell')=1$ consider the double coset 
$\Theta_\ell=\G w_\ell \G$, where $w_\ell=\left(\begin{smallmatrix} \ell x & y \\
Nz & \ell t \end{smallmatrix}\right)\in \M_\ell$ with $x,y,z,t\in \Z$. 
The coset space $\G\backslash \Theta_\ell$ consists of one element $\G w_\ell$, 
and the Atkin-Lehner involution $W_\ell$ on $M_k(\G)$ is given by
$W_\ell=\ell^{-w/2}[\Theta_\ell]$ with $w=k-2$. We
let $n\ge 1$ be arbitrary and consider the composition of Hecke and Atkin-Lehner operators
$T_n\circ W_\ell =\frac{1}{\ell^{w/2}}[ \Delta_n \Theta_\ell]$. We have 
\begin{equation}\label{5.1}
\Delta_n\Theta_\ell =\Big\{\left(\begin{smallmatrix} a & b \\
c & d \end{smallmatrix}\right) \in \M_{\ell n} :  N|c,\  \ell|\tr(M), \ \ell|a, \ (a,
\ell')=1, \ (b,\ell)=1 \Big\}.
\end{equation}
The double coset $\Theta_\ell\Delta_n$  is characterized by the same conditions, except
that $(b,\ell)=1$ is replaced by $(c/N, \ell)=1$. If $(n,\ell)=1$, the last two conditions
are empty, so the double cosets $\Delta_n$, $\Theta_\ell$ commute, but that is not the
case when $(n,\ell)>1$, when the double cosets, and the corresponding operators, do not commute. If
$(n,\ell)>1$ the double coset  $\Theta_\ell\Delta_n$ does not satisfy 
Assumption~\ref{eq_star}.  
\begin{lemma}\label{L8}
The double coset $\Delta_n\Theta_\ell$ satisfies Assumption~\ref{eq_star}.
\end{lemma}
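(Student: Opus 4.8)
The plan is to show that the always-surjective map $\G\backslash\SS\to\G_1\backslash\G_1\SS$ of Assumption~\ref{eq_star} is injective, where $\G=\G_0(N)$ and $\SS=\Delta_n\Theta_\ell$. Injectivity is equivalent to the statement that whenever $\gamma\in\G_1$, $\sigma\in\SS$ and $\gamma\sigma\in\SS$, one necessarily has $\gamma\in\G_0(N)$; indeed, if two cosets $\G\sigma$ and $\G\sigma'$ have the same image, then $\sigma'=\gamma\sigma$ with $\gamma\in\G_1$ and both $\sigma,\sigma'\in\SS$, so $\gamma\in\G_0(N)$ would force $\G\sigma=\G\sigma'$. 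Writing $\gamma=\sm pqrs$ with $ps-qr=1$, it thus suffices to prove $N\mid r=c_\gamma$.

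First I would record the congruence conditions satisfied by the entries of any $M=\sm abcd\in\SS$, read off from~\eqref{5.1} and split by the Chinese Remainder Theorem using $N=\ell\ell'$ with $(\ell,\ell')=1$. Modulo $\ell'$ we get $\ell'\mid c$ (from $N\mid c$) together with $(a,\ell')=1$. Modulo $\ell$ we get $\ell\mid a$ and $\ell\mid c$, hence $\ell\mid d$ since $\ell\mid\tr(M)=a+d$, together with $(b,\ell)=1$. In particular every element of $\SS$ is $\equiv\sm 0b00\pmod\ell$ with $b$ a unit mod $\ell$, and is upper-triangular mod $\ell'$ with invertible $(1,1)$-entry.

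Now set $\sigma=\sm abcd$ and $\sigma'=\gamma\sigma=\sm{pa+qc}{pb+qd}{ra+sc}{rb+sd}$, both in $\SS$. The crux is to treat the two prime-power parts of $r$ separately, each governed by one of the two coprimality conditions. For the $\ell'$-part: since $N\mid ra+sc$ and $N\mid c$ we get $N\mid ra$, whence $\ell'\mid ra$, and as $(a,\ell')=1$ this gives $\ell'\mid r$. For the $\ell$-part: the defining conditions on $\sigma'$ give $\ell\mid a'=pa+qc$ (automatic, since $\ell\mid a,c$) and $\ell\mid\tr(\sigma')$, hence $\ell\mid d'=rb+sd$; combined with $\ell\mid d$ this yields $\ell\mid rb$, and $(b,\ell)=1$ forces $\ell\mid r$. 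Since $(\ell,\ell')=1$, we conclude $N\mid r$, i.e. $\gamma\in\G_0(N)$, completing the proof. The only real obstacle is the bookkeeping in the first step; once the entrywise congruences are organized by CRT, the two coprimality hypotheses $(a,\ell')=1$ and $(b,\ell)=1$ slot in exactly to kill the $\ell'$- and $\ell$-parts of $r$ respectively, and neither the determinant nor the remaining conditions in~\eqref{5.1} are needed.
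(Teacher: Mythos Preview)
Your proof is correct and follows essentially the same approach as the paper's own proof: both reduce to showing that $\gamma\in\G_1$ with $\sigma,\gamma\sigma\in\SS$ forces $N\mid c_\gamma$, and both obtain $\ell'\mid c_\gamma$ from $N\mid c_{\gamma\sigma}a_\sigma^{-1}$ via $(a_\sigma,\ell')=1$, and $\ell\mid c_\gamma$ from $\ell\mid d_{\gamma\sigma}$, $\ell\mid d_\sigma$ via $(b_\sigma,\ell)=1$. The paper's write-up is terser, but the argument is the same.
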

\begin{proof}
 Let $\SS=\Delta_n\Theta_\ell$. We have to show that if
$\gamma=\big(\begin{smallmatrix} x & y \\ z & t \end{smallmatrix}\big) \in \G_1$ and
$\sigma=\big(\begin{smallmatrix} a & b \\ c & d \end{smallmatrix}\big)\in \SS$ such
that $\gamma\sigma\in \SS$, then we have $\gamma\in \G_0(N)$. Indeed, we have
$\ell\ell'|az$ and $\ell|bz$ and since $(a, \ell')=1$, $(b,\ell)=1$, we conclude that
$\ell\ell'|z$, so  $\gamma\in \G_0(N)$.
\end{proof} 

For $\G=\G_0(N)$ and $\SS=\DD_n\Theta_\ell$, assuming $k\ge 2$ is even we write 
  \be \label{6.5}\cc_{\G,\SS}^{\bf 1}(M)=\cc_{N,\ell}(M):=\#\{A\in\G_0(N)\backslash\G_1\;:\; AMA^{-1}\in \DD_n\Theta_\ell\}\;. \ee
In the next lemma we compute $\cc_{N,\ell}$ and see that it satisfies Assumption~\ref{h}.
\begin{lemma} \label{L6} For $M\in \M_{\ell n}$, set $t=\tr(M)$,
and let $G$ be the content of the associated quadratic form $Q_M$.
Then the coefficient $\cc_{N,\ell}(M)$ vanishes if
$\ell\nmid t$ and for $\ell|t$ it is given by 
\[
\cc_{N,\ell}(M) =\delta_{(\ell,G),1}\cdot\cc_{\ell',{\bf 1}}(M)=
\sum_{\substack{u|(\ell,G)\\u'|(\ell',G)}} C_{\ell'}(u',t,\ell n)\mu(u)  \;,
\]
where $\cc_{\ell',{\bf 1}}(M)$ is defined in in Lemma \ref{L9} and  
$C_{\ell'}(u,t,n):=C_{\ell',{\bf 1}}(u,t,n)$, for ${\bf 1}$ the 
trivial character modulo $\ell'$.
\end{lemma}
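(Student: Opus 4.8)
The plan is to use that the trace is a conjugacy invariant to dispose of the case $\ell\nmid t$, and then to split the count at the coprime moduli $\ell$ and $\ell'$, reducing the $\ell'$--part to Oesterlé's computation (Lemma~\ref{L9}) and the $\ell$--part to a local primitivity condition. Since $\tr(XMX^{-1})=t$ for all $X\in\G_1$ while every element of $\DD_n\Theta_\ell$ has trace divisible by $\ell$ by~\eqref{5.1}, no conjugate of $M$ lies in $\DD_n\Theta_\ell$ when $\ell\nmid t$, so $\cc_{N,\ell}(M)=0$; from now on I assume $\ell\mid t$.

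For $\ell\mid t$ I would parametrize $\G_0(N)\backslash\G_1\cong\PP^1(\Z/N\Z)$ by the bottom row $(c:d)$ of $X=\sm abcd$, exactly as in the proof of Lemma~\ref{L9}. Writing $XMX^{-1}=\sm{\alpha}{-Q_M(-b,a)}{Q_M(-d,c)}{t-\alpha}$, with $\alpha$ determined modulo $N$ by $(c:d)$ through~\eqref{20.1} and $\alpha^2-t\alpha+\ell n=Q_M(-d,c)\,Q_M(-b,a)$ by~\eqref{20.2}, the membership $XMX^{-1}\in\DD_n\Theta_\ell$ translates by~\eqref{5.1} into the conditions $N\mid Q_M(-d,c)$, $\ell\mid\alpha$, $(\alpha,\ell')=1$ and $(Q_M(-b,a),\ell)=1$. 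Because $(\ell,\ell')=1$, the Chinese Remainder Theorem gives $\PP^1(\Z/N\Z)\cong\PP^1(\Z/\ell\Z)\times\PP^1(\Z/\ell'\Z)$ and separates these into an $\ell'$--part ($\ell'\mid Q_M(-d,c)$ and $(\alpha,\ell')=1$) and an $\ell$--part ($\ell\mid Q_M(-d,c)$, $\ell\mid\alpha$ and $(Q_M(-b,a),\ell)=1$). Hence $\cc_{N,\ell}(M)$ factors as the product of the corresponding counts over $\PP^1(\Z/\ell'\Z)$ and $\PP^1(\Z/\ell\Z)$.

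The $\ell'$--part is precisely the count defining $\cc_{\ell',{\bf 1}}(M)$ for the Hecke double coset of determinant $\ell n$ at level $\ell'$, so Lemma~\ref{L9} (applied with level $\ell'$ and determinant $\ell n$) evaluates it as $B_{\ell',{\bf 1}}((\ell',G),t,\ell n)$. For the $\ell$--part I would work one prime $p^e\|\ell$ at a time. Here $p\mid t$ and $p\mid\det M=\ell n$ force $\tr\ov M\equiv\det\ov M\equiv 0\pmod p$, so $\ov M:=M\bmod p$ is nilpotent. If $p\mid G$ then $\ov M$ is scalar, every conjugate has vanishing off--diagonal entries mod $p$, and the condition $(Q_M(-b,a),\ell)=1$ fails; thus the $\ell$--part is $0$ whenever $(\ell,G)>1$. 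If $(\ell,G)=1$, then $\ov M$ is a nonzero nilpotent with a single kernel line $L$, and the Atkin--Lehner shape $\sm 0{*}00$ forces $X^{-1}\langle e_1\rangle\equiv L\pmod p$, which pins down a unique point of $\PP^1(\Z/p\Z)$. I expect the main obstacle to be promoting this to a \emph{unique} $(c:d)\in\PP^1(\Z/p^e\Z)$ solving the full congruences modulo $p^e$: the hypothesis $(p,G)=1$ makes $Q_M$ primitive, hence $M$ cyclic, modulo $p^e$, and this rigidity should force a unique Hensel lift, giving $\ell$--part $=\delta_{(\ell,G),1}$.

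Multiplying the two factors yields the first asserted equality $\cc_{N,\ell}(M)=\delta_{(\ell,G),1}\,\cc_{\ell',{\bf 1}}(M)$. The second equality is then formal: using $\sum_{u\mid(\ell,G)}\mu(u)=\delta_{(\ell,G),1}$ and the Möbius inversion~\eqref{ass2} at level $\ell'$, which gives $B_{\ell',{\bf 1}}((\ell',G),t,\ell n)=\sum_{u'\mid(\ell',G)}C_{\ell'}(u',t,\ell n)$, one obtains
\[
\delta_{(\ell,G),1}\,\cc_{\ell',{\bf 1}}(M)=\Bigl(\sum_{u\mid(\ell,G)}\mu(u)\Bigr)\Bigl(\sum_{u'\mid(\ell',G)}C_{\ell'}(u',t,\ell n)\Bigr)=\sum_{\substack{u\mid(\ell,G)\\ u'\mid(\ell',G)}}C_{\ell'}(u',t,\ell n)\,\mu(u).
\]
In particular $\cc_{N,\ell}(M)$ depends on $M$ only through $t=\tr M$, $\ell n=\det M$, and $(\ell,G),(\ell',G)$, i.e. through $(N,G)$, so Assumption~\ref{h} holds.
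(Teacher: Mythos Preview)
Your proof is correct and follows essentially the same route as the paper: dispose of $\ell\nmid t$ by trace invariance, split via the Chinese Remainder Theorem into an $\ell'$--factor identified with $\cc_{\ell',{\bf 1}}(M)$ and an $\ell$--factor shown to equal $\delta_{(\ell,G),1}$, then rewrite via $\sum_{u\mid(\ell,G)}\mu(u)$ and~\eqref{ass2}. The only cosmetic difference is that for the $\ell$--factor you argue via the nilpotent reduction $\ov M$ and its unique kernel line (plus a Hensel lift to $p^e$), whereas the paper translates the conditions $\ell\mid\alpha$, $\ell\mid\gamma$ directly into the linear system $dA\equiv cB$, $dC\equiv cD\pmod\ell$ and asserts it has a unique solution in $\PP^1(\Z/\ell\Z)$ when $(A,B,C,D,\ell)=1$; both arguments are at the same level of detail and your tentative phrasing (``should force a unique Hensel lift'') is no less complete than the paper's bare assertion $N_\ell(M)=1$.
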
  
\noindent The coefficients $C_{\ell'}(u,t,n)$ are computed explicitly in Lemma~\ref{L4}.
\begin{proof} Let $gMg^{-1}=\sm {\alpha}{\beta}{\gamma}{\delta}$.
By \eqref{5.1} and \eqref{6.5}  we have
  \[\cc_{N,\ell}(M)=\big|\{g\in \G\backslash\G_1 : \ell | \tr(M), 
  \ell|\alpha, \ell\ell'|\gamma , (\alpha, \ell')=1, (\beta, \ell)=1\}\big|. \]
Assuming from now on $\ell|\tr(M)$ (otherwise the previous set is empty), let
$M=\sm ABCD$, and $g=\sm abcd \in \G_1$. The coset
of $g$ in $\G\backslash\G_1$ is identified with the point $(c:d)\in \PP^1
(\Z/\ell \ell'\Z)$. By \eqref{20.1}, the conditions $\ell|\g$, $\ell|\alpha$ are equivalent to
\begin{equation}\label{20}
dA\equiv cB \pmod{\ell}, \quad dC\equiv cD \pmod{\ell}.
\end{equation}
Since $\beta=-Q_M(-b,a)$,  the condition $(\beta, \ell)=1$ implies
$(G,\ell)=1$. Conversely, if $p|(\beta, \ell)$ it follows that $p$ divides all
entries of $gMg^{-1}$, so $p|G$. Therefore the
condition $(\beta, \ell)=1$ is equivalent to $(G,\ell)=1$. 

Since $(\ell, \ell')=1$, the Chinese remainder theorem gives 
\begin{equation}\label{21}
\cc_{N,\ell}(M)=\cc_{\ell',\bf{1}}(M) \cdot N_\ell(M)
\end{equation}
with $\cc_{\ell',\bf{1}}(M)$ given by \eqref{16} for the trivial character ${\bf
1}$ mod $\ell'$, and $N_\ell(M)$ equals 0 if $(G,\ell)>1$, while $N_\ell(M)$ denotes the
number of solutions $(c:d)\in \PP^1(\Z/\ell\Z)$ of the system \eqref{20} if 
$(G,\ell)=1$.  As $AD-BC=n\ell$, we have $(G,\ell)=1$ if and only if $(A,B,C,D,\ell)=1$, and 
in this case $N_\ell(M)=1$. This proves the first equality, and the second follows 
by writing $\dd_{(\ell,G),1}=\sum_{u|(\ell,G)} \mu(u).$
\end{proof}

\begin{lemma} \label{L4} 
For any $N\ge 1$ and for $u|N, u^2|t^2-4n$ we have
\[
C_{N}(u,t,n)= |S_N(t,n)| \cdot C_N(u,t^2-4n)
\]
where $S_N(t,n)= \{\alpha\in (\Z/N\Z)^\times: \ \alpha^2-t\alpha +n\equiv 0 \pmod{N}\}$,
and the coefficients $C_N(u,D)$, defined for $u|N, u^2|D$, are
multiplicative in $(N,u)$, namely
\[
C_N(u,D)=\prod_{p|N} C_{p^{\nu_p(N)}}(p^{\nu_p(u)},D).
\]
If $N=p^a$ with $p$ prime and $a\ge 1$  we have  $C_{N}(p^0,D)=1, \ 
C_{N}(p^a,D)=p^{\lceil\frac{a}{2} \rceil},$ 
and setting $b=\nu_p(D)$ (with $b=\infty$ if $D=0$), for $0< i<a$ we have: if $p$ is
odd then 
\[ 
C_N(p^i,D)=\begin{cases}
  p^{\lceil\frac{i}{2} \rceil}- p^{\lceil\frac{i}{2} \rceil-1}& \text{ if }  1\le i\le
	 b-a,  i\equiv a\!\!\!\! \pmod{2}\\
  -p^{\lceil\frac{i}{2} \rceil-1} & \text{ if }  i=b-a+1, i\equiv a\!\!\!\! \pmod{2}\\
 p^{\lfloor\frac{i}{2} \rfloor}\Big(\tfrac{D/p^b}{p}\Big) & \text{ if }   i=b-a+1,
i\not\equiv a\!\!\!\!
\pmod{2}\\
  0& \text{ otherwise},
             \end{cases}
\] 
while if $p=2$ then
\[
C_N(2^i,D)=\begin{cases}
  2^{\lceil\frac{i}{2} \rceil-1}& \text{ if }  1\le i\le b-a-2,i\equiv a\!\!\!\! \pmod{2}
\\
 -2^{\lceil\frac{i}{2} \rceil-1}& \text{ if }  i=b-a-1,i\equiv a\!\!\!\! \pmod{2} \\
  2^{\lceil\frac{i}{2} \rceil-1}\epsilon_4(D/2^b)& \text{ if }  i=b-a, i\equiv a\!\!\!\!
\pmod{2}\\
  2^{\lfloor\frac{i}{2} \rfloor}\Big(\tfrac{D/2^b}{2}\Big) & \text{ if }  i=b-a+1, i\not\equiv 
a\!\!\!\!\pmod{2} \text{ and }D/2^b\equiv 1 \!\!\!\!\pmod 4 \\                   
  0& \text{ otherwise},
             \end{cases}
\]
where $\Big(\tfrac{\bullet}{p}\Big)$ denotes the Legendre symbol, and $\epsilon_4$ is the notrivial 
quadratic character mod 4. 
\end{lemma}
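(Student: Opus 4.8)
The plan is to reduce the whole statement to a local square‑root count. First I would establish the multiplicativity claim, which is essentially a bookkeeping consequence of the Chinese remainder theorem: if $N=N_1N_2$ with $(N_1,N_2)=1$ and $u=u_1u_2$ with $u_i\mid N_i$, then reduction modulo $N_1$ and $N_2$ sets up a bijection $S_N(u,t,n)\simeq S_{N_1}(u_1,t,n)\times S_{N_2}(u_2,t,n)$, while $\varphi_1$ is multiplicative and $N/u=(N_1/u_1)(N_2/u_2)$, so $B_N(u,t,n)=B_{N_1}(u_1,t,n)B_{N_2}(u_2,t,n)$. Since $C_N$ is the Moebius inverse of $B_N$ in the variable $u$, it inherits this multiplicativity, and this reduces both the factorization $C_N(u,t,n)=|S_N(t,n)|\,C_N(u,D)$ and the multiplicativity of $C_N(u,D)$ to the single prime power case $N=p^a$ (with $|S_N(t,n)|=\prod_p|S_{p^{\nu_p(N)}}(t,n)|$ likewise).

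For a prime power $N=p^a$ and $u=p^i$ I would complete the square through the substitution $\beta=2\alpha-t$, using $(2\alpha-t)^2=4(\alpha^2-t\alpha+n)+D$ with $D=t^2-4n$. When $p$ is odd, $2$ and $4$ are units, so $\alpha\mapsto\beta$ is a bijection on $\Z/p^a\Z$ and the congruence $\alpha^2-t\alpha+n\equiv0\pmod{p^{a+i}}$ is equivalent to $\beta^2\equiv D\pmod{p^{a+i}}$; the unit condition $(\alpha,p)=1$ translates into excluding the class $\beta\equiv -t\pmod p$. Thus $|S_{p^a}(p^i,t,n)|$ becomes a count of square roots of $D$ modulo $p^{a+i}$ reduced mod $p^a$, with one class possibly removed. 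The number of square roots of $D$ modulo a prime power is classical and depends only on $b=\nu_p(D)$ and the quadratic character of $D/p^{b}$, so this count depends on $(t,n)$ only through $D$, apart from the excluded class. The key observation is that $\beta\equiv-t$ is itself a root of $D$ modulo $p$ exactly when $p\mid n$, and in that case it is removed for every modulus $p^{a+i}$ simultaneously; hence it contributes the same proportional factor to $|S_{p^a}(p^i,t,n)|$ and to $|S_{p^a}(t,n)|$. This yields a factorization $B_{p^a}(p^i,t,n)=|S_{p^a}(t,n)|\cdot\widetilde B_{p^a}(p^i,D)$ with $\widetilde B_{p^a}(p^i,D)$ depending only on $p^a,p^i,D$, and Moebius inversion in $i$ then gives $C_{p^a}(p^i,t,n)=|S_{p^a}(t,n)|\cdot C_{p^a}(p^i,D)$, which is the first displayed identity.

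It remains to evaluate $C_{p^a}(p^i,D)$ explicitly. Since Moebius inversion over a single prime collapses to $C_{p^a}(p^i,D)=\widetilde B_{p^a}(p^i,D)-\widetilde B_{p^a}(p^{i-1},D)$ for $i\ge1$ and $C_{p^a}(1,D)=1$, I would insert the explicit square‑root counts and compare $i$ with $b-a$, the transition occurring where the modulus $p^{a+i}$ crosses $p^{b}$. The three nonzero regimes in the odd‑$p$ table (an interior range $1\le i\le b-a$ with $i\equiv a\pmod2$, the boundary $i=b-a+1$, and vanishing elsewhere) correspond to where raising the modulus by one power of $p$ multiplies, shrinks, or annihilates the set of available square roots, the Legendre symbol $\left(\frac{D/p^b}{p}\right)$ recording solvability at the final step; the endpoints $C_{p^a}(1,D)=1$ and $C_{p^a}(p^a,D)=p^{\lceil a/2\rceil}$ fall out of the same computation.

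The main obstacle is the prime $p=2$. There $2$ is not a unit, so completing the square gives no clean bijection, and one must count square roots of $D$ modulo powers of $2$ directly; the count jumps according to whether $D/2^b\equiv1\pmod4$ or $\equiv1\pmod8$, which is why the quadratic character $\epsilon_4$ and the extra subcases $i=b-a-2,\,b-a-1,\,b-a,\,b-a+1$ appear. Verifying that the unit condition $(\alpha,2)=1$ still factors out uniformly, and that the shifted cutoffs in the $p=2$ table are placed correctly, is the delicate part; I would handle it by a direct case analysis on $\nu_2(D)$ that mirrors the odd‑$p$ argument while carrying the extra factor of $2$ throughout.
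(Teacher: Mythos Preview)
Your proposal is correct and follows essentially the same approach as the paper: reduce to prime powers by multiplicativity, compute $|S_{p^a}(p^i,t,n)|$ (via completing the square, which the paper leaves implicit in the phrase ``straightforward to compute''), observe that the ratio to $|S_{p^a}(t,n)|$ depends only on $D$, and then take the single-prime M\"obius difference $C_{p^a}(p^i,D)=\widetilde B_{p^a}(p^i,D)-\widetilde B_{p^a}(p^{i-1},D)$. Your analysis of the unit condition---that the excluded class $\beta\equiv -t\pmod p$ either never hits a root ($p\nmid n$), removes a fixed fraction independent of $i$ ($p\mid n$, $p\nmid D$), or empties the set entirely ($p\mid n$, $p\mid D$)---is exactly the mechanism behind the factorization the paper states without justification.
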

\begin{proof} If $S_N(t,n)=\emptyset$, then $S_{N}(u,t,n)=\emptyset$ for all $u|N$, so the
constants $C_{N}(u,t,n)$ are all 0. 
Since $|S_{N}(u,t,n)|=|S_{N}(u,-t,n)|$, we can write
\[C_{N}(u,t,n)=|S_N(t,n)| \cdot C_N(u,D)
\]
for some function $C_N(u,D)$. The function
$C_N(u,D)$ is
multiplicative in $N$, as $C_{N,{\bf 1}}(u,t,n)$ and $|S_N(t,n)|$ are multiplicative, and, by \eqref{16},
if $N=p^a$ it equals 
\[C_{N}(p^i,D)=\frac{\varphi_1(p^a)/\varphi_1(p^{a-i})\cdot | S_{	
N}(p^i,t,n)|-
\varphi_1(p^a)/\varphi_1(p^{a-i+1})\cdot |S_{N}(p^{i-1},t,n)|}{|S_N(t,n)|},
\]
with the second term in the numerator missing if $i=0$. The cardinality of the set
$S_{p^a}(p^i,t,n)$ is straightforward to compute, leading to the formulas above. 
\end{proof}

To finish the proof of Theorem \ref{T4}, we compute the function 
$\Phi_{N,\ell}:=\Phi_{\G,\DD_n\Theta_\ell}^{\chi=\bf{1}}$ in Theorem~\ref{p6.1}.

\begin{lemma} \label{Ta1}
We have $\Phi_{N,\ell}(a,d)=0$ unless $n\ell=ad$ and $\ell|a+d$, when 
  \[\Phi_{N,\ell}(a,d)=\frac{\varphi(\ell)}{\ell}\sum_{\substack{\ell'=rs,\ (r,s)|a-d\\   
  (r,a)=1, (s,d)=1}} \varphi((r,s))\;.\] 
\end{lemma}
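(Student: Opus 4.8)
The plan is to follow the proof of Lemma~\ref{Ta2}, feeding the membership conditions~\eqref{5.1} for $\SS=\DD_n\Theta_\ell$ into formula~\eqref{42}, and then to separate the contributions of $\ell$ and $\ell'$ by the Chinese remainder theorem. Since $-1\in\G_0(N)$, $\chi=\mathbf 1$ and $k$ is even, we have $\wc\equiv 1$ on $\SS$ and $(\pm1)^k=1$, so \eqref{42} reduces to
\[
\Phi_{N,\ell}(a,d)=\frac{1}{(a,d)}\sum_{C\in R(\G)}\#\big\{M\in\la T^{\om(C)}\ra\backslash M_{a,d}^\infty/\la T^{\om(C)}\ra \;:\; CMC^{-1}\in\SS\big\}.
\]
As $\SS\subset\M_{\ell n}$ and every element of $\SS$ has trace divisible by $\ell$, the inner set is empty unless $ad=\ell n$ and $\ell\mid a+d$, which gives the asserted vanishing; assume from now on that $ad=\ell n$ and $\ell\mid a+d$.

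Next I would take the same cusp representatives as in Lemma~\ref{Ta2}: $C=\sm p*rq$ with $N=rs$, $q$ ranging over a set $\Sc_r$ of size $\varphi((r,s))$, and $\om(C)=s/(r,s)$. Writing $CMC^{-1}=\sm\alpha\beta\gamma\delta$ for $M=\sm ab0d$, I translate the five conditions of~\eqref{5.1} via~\eqref{20.1} and the content argument in the proof of Lemma~\ref{L6}: the condition $N\mid\gamma$ is equivalent to $\alpha\equiv a\pmod r$, $\alpha\equiv d\pmod s$ (solvable iff $(r,s)\mid a-d$) and pins $b$ down modulo $s/(r,s)$ exactly as in Lemma~\ref{Ta2}; the condition $(\alpha,\ell')=1$ becomes coprimality of $a,d$ with the $\ell'$-parts of $r,s$; the condition $\ell\mid\alpha$ constrains the $\ell$-part of the cusp and of $b$; and, using $\ell\mid\alpha$ and $N\mid\gamma$, the condition $(\beta,\ell)=1$ becomes $\gcd(\ell,d-a,b)=1$.

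I would then split $N=rs$ by the Chinese remainder theorem into its $\ell$-part $\ell=r_1s_1$ and its $\ell'$-part $\ell'=r_2s_2$, so that $\varphi((r,s))=\varphi((r_1,s_1))\varphi((r_2,s_2))$ and the whole count factors as an $\ell$-factor times an $\ell'$-factor, matched by the factorization $(a,d)=(a,d)_\ell\,(a,d)_{\ell'}$ into $\ell$- and $\ell'$-parts. The $\ell'$-factor involves only the conditions $N\mid\gamma$ and $(\alpha,\ell')=1$; these are precisely the conditions appearing in the proof of Lemma~\ref{Ta2} for the trivial character modulo $\ell'$, and---after the $(a,d)_{\ell'}$ solutions for $b$ cancel the factor $1/(a,d)_{\ell'}$---produce exactly the sum $\sum_{\ell'=rs,\,(r,s)\mid a-d,\,(r,a)=1,\,(s,d)=1}\varphi((r,s))$ appearing in the statement.

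The heart of the matter is the $\ell$-factor, which I claim equals the constant $\varphi(\ell)/\ell$ regardless of $a,d$. Because $ad=\ell n$ and $\ell\mid a+d$, for every prime $p\mid\ell$ one has $v_p(a)\ge1$ and $v_p(d)\ge1$, so $M$ is nilpotent modulo $p$; the conditions $\ell\mid\alpha$, $N\mid\gamma$ and $\gcd(\ell,d-a,b)=1$ then select, for each $p$ with $e=v_p(\ell)$, exactly one admissible factorization $(r_1,s_1)=(p^i,p^{e-i})$ together with a prescribed residue class for the $\ell$-part of $b$. When $\ell$ is squarefree this selection is transparent---only the factorization with $\ell\mid r$ survives, since $N\mid\gamma$ would otherwise force $p\mid b$ and clash with $(\beta,\ell)=1$---and one counts $\varphi(\ell)$ admissible values of $b$ modulo $(a,d)_\ell$, giving $\varphi(\ell)/\ell$ after dividing by $(a,d)_\ell$. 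For general prime powers the surviving index $i$ is governed by the valuations $v_p(a),v_p(d)$ subject to $v_p(a)+v_p(d)=v_p(\ell)+v_p(n)$ and $v_p(a+d)\ge v_p(\ell)$, and a direct check shows the count is again $\varphi(p^e)/p^e$. Multiplying the $\ell$- and $\ell'$-factors yields the stated formula. The main obstacle is precisely this last step: verifying that the $\ell$-factor is the \emph{uniform} constant $\varphi(\ell)/\ell$, independent of $a$ and $d$. This independence is not visible term by term---the surviving cusp factorization and the admissible residues of $b$ genuinely depend on the $p$-adic valuations of $a$ and $d$---and establishing it requires the somewhat delicate prime-by-prime bookkeeping sketched above.
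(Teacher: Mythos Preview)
Your approach is essentially the same as the paper's: both start from formula~\eqref{42}, use the cusp representatives from the proof of Lemma~\ref{Ta2}, and reduce to a direct count of admissible $b$'s, with the paper itself omitting the computation and deferring to its arXiv version. Your outline of the CRT split into an $\ell$-part and an $\ell'$-part, together with the identification of the uniform $\ell$-factor $\varphi(\ell)/\ell$ as the crux, accurately mirrors how that omitted computation goes through.
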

\begin{proof}Letting $M_{b}=\sm ab0d$ and $g=\gcd(a,d)$, by Lemma~\ref{p6.1} we have
  \[ \Phi_{N,\ell}(a,d)=\frac 1{g} \sum_{C\in R(\G)}  \#\{b\!\!\!\! \pmod{g\om(C)} \;:\; 
  CM_b C^{-1}\in\DD_n\Theta_\ell \},\]
where $R(\G)\subset \G_1$ is the set of representatives for the cusp
space $\G\backslash\G_1/\G_{1\infty}$ chosen in the proof of Lemma~\ref{Ta2}.
Counting elements in the sets above can now be done as in the proof of
Lemma~\ref{Ta2}. We omit the proof, which can be found in our
arXiv preprint 1408.4998v2.
\end{proof}
\subsection{Proof of Corollary~\ref{C2}}\label{sec4.3}
The function $\Psi_4(a,d)$ is easily computable, so it remains to compute 
the sum over $t$ in Theorem~\ref{T5}. We write it as:
\[
\sideset{}{'}\sum_{0\le t\le\sqrt{4n}}p_{k-2}(t,n)\cdot
\sum_{u|4} H\left(\tfrac{4n-t^2}{u^2}\right) (D_4(u,t,n)+(-1)^k D_4(u,-t,n))
\]
with the prime indicating the the term with $t=0$ has coefficient 1/2. The sum
is restricted to those $t$ with $4|\pm t-n-1$, as otherwise $D_4(u,\pm t,n)=0$.
For $n$ even this means that $t$ is odd so only $u=1$ is present in the sum over 
$u$, obtaining the formula in part b). For $n$ odd, it follows that $t=2s$ is even 
and we have a few cases depending on whether $\nu_2(2s-n-1)$ is 2, 3 or $\ge 4$. 
We leave the details to the reader, noting only that in the case 
$k$ even and $n$ odd we also use the class number relations in the 
following table, valid for $D\ge 0$:\vspace{2mm}
\setlength{\tabcolsep}{12pt}
\begin{center}
\begin{tabular}{c|c|c}
$D\equiv 3 \pmod 8$ &  $D\equiv 7 \pmod 8$ & $D\equiv 0 \pmod 4$ \\
$H(4D)=4H(D)$ &  $H(4D)=2H(D)$ & $H(4D)=3H(D)-2H(D/4)$ \\
\end{tabular}
\end{center}


\begin{thebibliography}{9999}

\bibitem{Co} H. Cohen, {\em Sums involving the values at negative integers of L-functions of quadratic 
characters}, Math. Ann. 217 (1975), 271--285
\bibitem{C77} H. Cohen, {\em Trace des op\'erateurs de Hecke sur $\G_0(N)$}.
S\'eminaire de Th\'eorie des Nombres de Bordeau, 1976-1977, 4-01--4-09
\bibitem{E56} M. Eichler, \emph{\"Uber die Darstellbarkeit von Modulformen durch
Thetareihen}, J. Reine Angew. Math. 195 (1955), 156--171
\bibitem{H74} H. Hijikata, \emph{Explicit formula of the traces of Hecke operators for
$\G_0(N)$}, J. Math. Soc. Japan 26, No.1 (1974), 56--82 
\bibitem{Me1} M.H. Mertens, {\em Mock modular forms and class number relations.} 
Research in the Math. Sciences 1:6 (2014)
\bibitem{O77} J. Oesterl\'e, \emph{Sur la trace des op\'erateurs de Hecke}, Th\`ese de troisi\`eme cycle,
Universit\'e Paris-Sud, Orsay (1977)

\bibitem{P} A.A. Popa, \emph{On the trace formula for Hecke operators on congruence subgroups.}
Preprint, arXiv:1408.4998v4
\bibitem{PZ} A.A. Popa, D. Zagier, \emph{A simple proof of the Eichler-Selberg trace
formula}. Draft available upon request
\bibitem{PZ1} A.A. Popa, D. Zagier, \emph{A combinatorial refinement of the 
Kronecker-Hurwitz class number relation}. Proc. Amer. Math. Soc. 145/3 (2017), 
1003--1008
\bibitem{R} F. R\u{a}dulescu, \emph{ Endomorphisms of spaces of virtual 
vectors fixed by a discrete group.} Russian Math. Surveys 71 (2016), 291--343 
\bibitem{Sa} H. Saito, \emph{On Eichler's trace formula}, J. Math. Soc. Japan
24 (1972), 333--340
\bibitem{Se} A. Selberg, \emph{Harmonic analysis and discontinuous groups in weakly
symmetric Riemannian spaces with applications to Dirichlet series},  J. Indian Math. Soc.
(N.S.) 20 (1956), 47--87
\bibitem{S} J.-P. Serre, \emph{R\'epartition asymptotique des valeurs propres
de l'op\'erateur de Hecke $T_p$}. Journal of the AMS 10 (1997), 75--102
\bibitem{Sh} G. Shimura, \emph{Introduction to the arithmetic theory of automorphic
functions}, Princeton U. Press (1971)
\bibitem{Sh1} G. Shimura, \emph{On the trace formula for Hecke operators}. Acta Math.,
vol. 132 (1974), 245--281
\bibitem{SZ} N.-P. Skoruppa, D. Zagier, \emph{Jacobi forms and a certain space of
modular forms.} Inv. Math. 94 (1988), 113--146
\bibitem{Z1} D. Zagier, \emph{Hecke operators and periods of modular forms.} Israel 
math. conf. proc. Vol. 3 (1990), 321--336
\bibitem{Z} D. Zagier, \emph{Periods of modular forms, traces of
Hecke operators, and multiple zeta values.} Research into automorphic forms and
$L$-functions (Kyoto, 1992).  S\=urikaisekikenky\=usho K\=oky\=uroku 
843  (1993), 162--170

\end{thebibliography}
\end{document}